\pgfplotsset{compat=1.11}
\numberwithin{equation}{section}
\theoremstyle{plain}
\newtheorem{thmVTEX}{Theorem}[section]
\newtheorem{lem}[thmVTEX]{Lemma}
\newtheorem{cor}[thmVTEX]{Corollary}
\theoremstyle{definition}
\newtheorem{rem}{Remark}
\newtheorem{example}{Example}
\newcommand{\Real}{\mathbb R}
\renewcommand{\Re}{\mathrm{Re}}
\renewcommand{\Im}{\mathrm{Im}}
\newcommand{\sign}{\mathrm{sign}}
\newcommand{\one}[1]{\mathbf{1}_{\{#1\}}}
\newcommand{\eps}{\varepsilon}
\newcommand{\arccot}{\mathrm{arccot}}
\def\Xint#1{\mathchoice {\XXint\displaystyle\textstyle{#1}}%
{\XXint\textstyle\scriptstyle{#1}}%
{\XXint\scriptstyle\scriptscriptstyle{#1}}%
{\XXint\scriptscriptstyle\scriptscriptstyle{#1}}%
\!\int}
\def\XXint#1#2#3{{\setbox0=\hbox{$#1{#2#3}{\int}$}
\vcenter{\hbox{$#2#3$}}\kern-.5\wd0}}
\def\dashint{\Xint-}
\begin{document}
\begin{frontmatter}

\title{Asymptotic analysis of the finite predictor for fractional Gaussian noise}
\runtitle{Asymptotic analysis of the finite predictor for fGn}

%RUNTITLE: Asymptotic analysis of the finite predictor for fGn
%\runtitle{\kaka {Fill in!}}

\begin{aug}
%
%%% initials in inits and fnms (if any) without spaces
%
% {\inits{}\fnms{}~\snm{}\ead[label=e?]{}\ead[label=u1,url]{}\orcid{}}
%
\author[A]{\inits{P.}\fnms{Pavel}~\snm{Chigansky}\ead[label=e1]{Pavel.Chigansky@mail.huji.ac.il}}
\author[B]{\inits{M.}\fnms{Marina}~\snm{Kleptsyna}\ead[label=e2]{Marina.Kleptsyna@univ-lemans.fr}}
%%\runauthor{} %% auto
%
%\dedicated{}
%
%%%%%% printead order as in MS:
\address[A]{Department of Statistics,
The Hebrew University of Jerusalem, Jerusalem,
Israel\printead[presep={,\ }]{e1}}
\address[B]{Laboratoire Manceau de Math\'{e}matiques,
Le Mans Universit\'{e}, Le Mans,
France\printead[presep={,\ }]{e2}}
%%%%%%
\end{aug}
%% Move support info to funding

% HISTORY:
\received{\smonth{4} \syear{2025}}% Updated by VTEXPTS2LaTeX.exe, 02.06.2026 13:32
\revised{\smonth{2} \syear{2026}}% Updated by VTEXPTS2LaTeX.exe, 02.06.2026 13:32

% ABSTRACT
\begin{abstract}
This paper proposes a new approach to the asymptotic analysis of the finite
predictor for stationary sequences. Our method yields the exact asymptotics
of both the relative prediction error and the partial correlation coefficients.
The underlying assumptions are analytic in nature, making the approach
applicable to processes with long-range dependence. The ARMA-type process
driven by fractional Gaussian noise (fGn), which had previously remained
elusive, is used as a case study.
\end{abstract}

% KEYWORDS
\begin{keyword} %% 1 u.c.
\kwd{Fractional Gaussian noise}
\kwd{long-range dependence}
\kwd{prediction}
\end{keyword}

\end{frontmatter}
%%%%%%%%%%%%%%%%%%%%%%%%%%%%%%%%%%%%%%%%%%%%%%%%%%%%%%%%%%%%%%%%%%%%%%%%%
%%%% Main text entry area:
%s1 #&#
\section{Introduction}
\label{sec1}

%s1.1 #&#
\subsection{Prediction}
\label{sec1.1}

Consider a centered, weakly stationary, real-valued random process
$X:=(X_{n})_{n\in \mathbb Z}$. The covariance sequence is defined by
$ \gamma (k):=\mathsf{E}X_{0}X_{k} $ and the spectral density is given
by
\begin{equation*}
f(\lambda ) := \frac {1} {2\pi} \sum _{k=-\infty}^{\infty }\gamma (k) e^{i
\lambda k},\quad \lambda \in (-\pi ,\pi ].
\end{equation*}
For a pair of integers $k\le m$, let $P_{[k,m]}$ be the projection operator
onto the linear subspace spanned by $\{X_{k},...,X_{m}\}$. The optimal
one-step predictor of $X_{n}$ is the projection $P_{[1,n-1]}X_{n}$ and
its mean squared error
\begin{equation*}
\sigma ^{2}(n) = \mathsf{E}\big(X_{n} - P_{[1,n-1]}X_{n}\big)^{2}
\end{equation*}
is minimal among all linear predictors based on the data
$\{X_{1},...,X_{n-1}\}$. The sequence $\sigma ^{2}(n)$ is non-increasing
and its limit is given by the Szeg\"{o}-Kolmogorov geometric mean formula
%
%e1.1 #&#
\begin{equation}
\label{SKfla}
\sigma ^{2}(n) \xrightarrow[n\to \infty ]{} 2\pi \exp \left (
\frac {1}{2\pi } \int _{-\pi}^{\pi} \log f(\lambda ) d\lambda \right )
=: \sigma ^{2}.
%%LEAP%%%\label{eq1.1}
\end{equation}

Another important quantity related to the prediction problem is the sequence
of partial correlation coefficients:
%
%e1.2 #&#
\begin{equation}
\label{alpha}
\alpha (1) :=   \gamma (1)/\gamma (0),\qquad 
\alpha (n) :=   \rho \big(X_{0}-P_{[1,n-1]}X_{0}, X_{n}-P_{[1,n-1]}X_{n}
\big), \quad n>1,
%
%%LEAP%%%\label{eq1.2}
\end{equation}
where
$\rho (\xi , \eta ) = \mathsf{Cov}(\xi ,\eta )/\sqrt{\mathsf{Var}(
\xi )\mathsf{Var}(\eta )}$ denotes the Pearson correlation between the
random variables $\xi $ and $\eta $. The prediction error and the partial
correlation are related through the identity
\begin{equation*}
\sigma ^{2}(n)= \prod _{j=1}^{n} \big(1-\alpha (j)^{2}\big).
\end{equation*}
The asymptotic behavior of the partial correlation coefficients and the
{\em relative} prediction error,
\begin{equation*}
\delta (n) := \sigma ^{2}(n)-\sigma ^{2}, \quad \text{as}\ n\to
\infty ,
\end{equation*}
has been the subject of extensive research for decades. A comprehensive
account of this and other aspects of the prediction problem can be found
in the recent survey \cite{BG23}.

%s1.2 #&#
\subsection{Prior related results}
\label{sec1.2}

If the spectral density of the process is analytic and strictly positive,
the convergence rate of $\delta (n)$ is at least geometric, as shown in
\cite[Ch 10 \S 10]{GS84}. However, the presence of zeros or singularities
in the spectral density slows the convergence at a hyperbolic rate.

%t1.1 #&#
\begin{thmVTEX}[\cite{IS68}]
%%LEAP%%%\label{thm1.1}
\label{thm:IS}
Assume the spectral density has the form
\begin{equation*}
f(\lambda )= f_{1}(\lambda ) \prod _{k=1}^{m} \big|e^{i\lambda}-e^{i
\lambda _{k}}\big|^{-2d_{k}},
\end{equation*}
where the function $f_{1}(\lambda )$ is strictly positive and
$\alpha $-Lipschitz with $\alpha \ge \frac {1} {2}$, the points
$\lambda _{k}\in [-\pi ,\pi ]$ are distinct, and the exponents
$d_{k}< \frac {1} {2}$ are not all zeros. Then
\begin{equation*}
\delta (n) \asymp 1/n, \quad n\to \infty ,
\end{equation*}
where $x_{n}\asymp y_{n}$ denotes that
$0<\varliminf x_{n}/y_{n} \le \varlimsup x_{n}/y_{n}<\infty $.
\end{thmVTEX}

The special case where $m=1$ and $\lambda _{1}=0$ and
$d_{1}:=d\in (0,\frac {1}{2} )$ corresponds to processes with long memory
(or long-range dependence), for which the covariance sequence is not absolutely
summable, $ \sum _{k=-\infty}^{\infty}|\gamma (k)|=\infty  $,
\cite{PT17}. In this case, under certain conditions, the asymptotics of
the relative error and partial correlation can be made precise; this reveals
an interesting universality discovered in \cite{In00}.

To formulate this result, let us recall a few basic notions from the theory
of stationary processes \cite{BDbook91}. A stationary process $X$ is purely
non-deterministic if the integral in \eqref{SKfla} is finite, that is,
$\sigma ^{2}>0$. Any such process admits the moving average MA($
\infty $) representation
\begin{equation*}
X_{n} = \sum _{j=-\infty}^{n} c_{n-j} \xi _{j}, \quad n\in \mathbb Z,
\end{equation*}
and the autoregressive AR($\infty $) representation
\begin{equation*}
\sum _{j=-\infty}^{n} a_{n-j} X_{j} + \xi _{n} =0, \quad n\in
\mathbb Z,
\end{equation*}
where $\xi _{j}$'s are orthogonal standardized random variables. The real
numbers $(c_{j})$ and $(a_{j})$ are called, respectively, the MA($
\infty $) and AR($\infty $) coefficients of $X$.

A positive measurable function $\ell (\cdot )$, defined on some neighborhood
of infinity, is called slowly varying if
$\lim _{x\to \infty}\ell (tx)/\ell (x)=1$ for all $t>0$. In the next theorem,
the condition $\int ^{\infty }s^{-1}\ell (s) ds =\infty $ indicates that
$\int _{B}^{\infty }s^{-1} \ell (s) ds =\infty $ for some $B>0$ such that
$\ell (\cdot )$ is locally bounded on $[B,\infty )$. In this case, the
function $\widetilde \ell (x) := \int _{B}^{x} s^{-1} \ell (s)ds$ for
$x\ge B$ is also slowly varying.

%t1.2 #&#
\begin{thmVTEX}[Theorem 6.1 in \cite{In08}]%
\label{thm:I2}
Let $X$ be a purely non-deterministic process whose covariance satisfies\footnote{Here and below, $x_{n}\sim y_{n}$ means that
$x_{n} = y_{n}(1+o(1))$ as $n\to \infty $.}
\begin{equation*}
\gamma (k) \sim k^{2d-1}\ell (k), \quad k\to \infty ,
\end{equation*}
for some constant $d\in (-\infty , \frac {1} {2})$ and a function
$\ell (\cdot )$, slowly varying at infinity. Assume that the MA($
\infty $) and AR($\infty $) coefficients of $X$ satisfy the following conditions:
%
%e1.3 #&#
\begin{equation}
\label{ac}
\begin{aligned}
& c_{k} \ge 0 \ \text{for all}\ k\ge 0;
\\
& (c_{k}) \text{\ is eventually decreasing to zero};
\\
& (a_{k}) \text{\ is eventually decreasing to zero}.
\end{aligned}
%
%%LEAP%%%\label{eq1.3}
\end{equation}

1. If $d\in (0,\frac {1} {2})$, then
%
%e1.4 #&#
\begin{equation}
\label{lrd}
\alpha (n) \sim \frac {d}{n}, \quad n\to \infty .
%%LEAP%%%\label{eq1.4}
\end{equation}

2. If $d=0$ and $\int ^{\infty }s^{-1}\ell (s)ds =\infty $, then
\begin{equation*}
\alpha (n) \sim \frac{\ell (n)}{2n\widetilde \ell (n)}, \quad n\to
\infty .
\end{equation*}

3. If $d=0$ with $\int ^{\infty }s^{-1} \ell (s)ds <\infty $ or
$d\in (-\infty ,0)$, then
\begin{equation*}
\alpha (n)\sim
\frac{n^{2d-1}\ell (n)}{\sum _{k\in \mathbb Z}\gamma (k)}, \quad n
\to \infty .
\end{equation*}
\end{thmVTEX}

A striking feature of the asymptotics in the long memory case
\eqref{lrd} is that it does not depend on any details of the covariance
(or spectral density) other than $d$, including the slowly varying part
$\ell (\cdot )$. In this case, the relative prediction error can be shown
to satisfy the asymptotics
%
%e1.5 #&#
\begin{equation}
\label{dan}
\delta (n) \sim \sigma ^{2} \frac{d^{2}}{n}, \quad n\to \infty .
%%LEAP%%%\label{eq1.5}
\end{equation}

A sufficient condition for \eqref{ac} is that the covariance admits the
representation $ \gamma (k) = \int _{0}^{1} t^{|k|}\mu (dt) $, for some
finite Borel measure $\mu (\cdot )$ on $[0,1)$. This property, known as
\emph{reflection positivity}, is satisfied, for instance, by the prototypical
example $\gamma (k)=(1+|k|)^{2d-1}$, see \cite[\S 6]{In08} for further
details. In general, the conditions in \eqref{ac} may not be easily verifiable
or may not hold for certain concrete processes of interest.

In particular, \eqref{ac} is not satisfied by the FARIMA$(p,d,q)$ process,
which plays a central role in the theory and applications of long-memory
time series \cite{PT17}. The spectral density of this process has the form
%
%e1.6 #&#
\begin{equation}
\label{farima}
f(\lambda ) = \left |
\frac{\theta (e^{i\lambda}) }{ \phi (e^{i\lambda}) }\right |^{2}
\frac {1}{2\pi}\big|1-e^{i\lambda}\big|^{-2d}, \quad \lambda \in (-
\pi , \pi ],
%%LEAP%%%\label{eq1.6}
\end{equation}
with parameter $d\in (-\frac {1} {2}, \frac {1} {2})\setminus \{0\}$. The
MA and AR polynomials $\theta (\cdot )$ and $\phi (\cdot )$ are of degrees
$q$ and $p$, respectively, with real-valued coefficients and are normalized
so that $\theta (0)=\phi (0)=1$. The FARIMA process exhibits long memory
when $d\in (0,\frac {1} {2})$, which manifests as a singularity at the
origin of its spectral density \eqref{farima}.

%t1.3 #&#
\begin{thmVTEX}[Theorem 2.5 in \cite{In08}]%
\label{thm:I3}
Assume that the polynomials $\phi (\cdot )$ and $\theta (\cdot )$
\begin{enumerate}
\renewcommand{\theenumi}{A\arabic{enumi}}
\renewcommand{\labelenumi}{(\theenumi)}
\item[(A1)]
%\label{A1}%
are relatively prime,
\item[(A2)]
%\label{A2}%
have no zeros in the closed unit disk $\{z\in \mathbb C: |z|\le 1\}$.
\end{enumerate}
\noindent
Let $X$ be the FARIMA$(p,d,q)$ process with
$p,q\in \mathbb N\cup \{0\}$ and
$d\in (-\frac {1} {2},\frac {1} {2})\setminus \{0\}$. Then
%
%e1.7 #&#
\begin{equation}
\label{alphan}
\alpha (n) \sim \frac {d}{n}, \quad n\to \infty ,
%%LEAP%%%\label{eq1.7}
\end{equation}
and \eqref{dan} holds with $\sigma ^{2}=1$.
\end{thmVTEX}

The main tool underlying the method pioneered in the series of papers
\cite{In00}-\cite{In23} is von Neumann's alternating projection theorem.
This theorem asserts that for a pair of closed subspaces $A$ and $B$ of
a Hilbert space $\mathcal H$
%
%e1.8 #&#
\begin{equation}
\label{VN}
\lim _{n} (P_{B}P_{A})^{n} x = P_{A\cap B}x, \quad \forall x\in
\mathcal H,
%%LEAP%%%\label{eq1.8}
\end{equation}
where $P_{W}$ stands for the orthogonal projection operator onto a subspace
$W\subseteq \mathcal H$. In the context of the prediction problem, the
Hilbert space is
$\mathcal H := \overline {\mathrm{span}}\{X_{k}: k\in \mathbb N\}$ with
the standard inner product
$\langle \xi , \eta \rangle := \mathsf{E}\xi \eta $ for
$\xi , \eta \in \mathcal H$. Under appropriate conditions, the subspaces
$A := \overline {\mathrm{span}}\{X_{k}: k\le n-1\}$ and
$B :=\overline {\mathrm{span}}\{X_{k}: k\ge 1\}$ satisfy the ``intersection
of past and future'' property:
%
%e1.9 #&#
\begin{equation}
\label{AcapB}
A\cap B = \mathrm{span}\big\{X_{1},...,X_{n-1}\big\}.
%%LEAP%%%\label{eq1.9}
\end{equation}

The projections onto the infinite past and future, $P_{A}$ and
$P_{B}$, can be readily expressed in terms of the MA($\infty $) and AR($
\infty $) coefficients, $(c_{k})$ and $(a_{k})$, of the process $X$. Consequently,
in view of equations \eqref{VN} and \eqref{AcapB}, the quantities associated
with the finite predictor $P_{[1,n-1]}$ can also be expressed using these
coefficients. This facilitates the derivation of a useful representation
for the partial correlation $\alpha (n)$ in terms of $(c_{k})$ and
$(a_{k})$. Theorem \ref{thm:I2} and Theorem \ref{thm:I3} are then proved
by means of an asymptotic analysis of this representation as
$n\to \infty $.

%s1.3 #&#
\subsection{This paper}
\label{sec1.3}

The principal contribution of this paper is a novel approach to the asymptotic
analysis of the prediction problem for stationary sequences. Our method
applies to ARMA-type processes with spectral densities of the form
%
%e1.10 #&#
\begin{equation}
\label{f}
f(\lambda ) = \left |
\frac{\theta (e^{i\lambda}) }{ \phi (e^{i\lambda}) }\right |^{2} f_{0}(
\lambda ), \quad \lambda \in (-\pi , \pi ],
%%LEAP%%%\label{eq1.10}
\end{equation}
where $\theta (\cdot )$ and $\phi (\cdot )$ are polynomials as in
\eqref{farima} and $f_{0}(\lambda )$ is a given spectral density. Central
to our approach is the assumption that $f_{0}(\lambda )$ admits a \emph{sectionally
holomorphic} extension to the complex plane. More precisely, there must
exist a function $Q(z)$ such that\vspace*{-2.5pt}
%
%e1.11 #&#
\begin{equation}
\label{f0Q}
Q(e^{i\lambda})=f_{0}(\lambda ), \quad \lambda \in (-\pi ,\pi ],
%%LEAP%%%\label{eq1.11}
\end{equation}
and which is holomorphic everywhere except, possibly, along a curve, where
it may exhibit a jump discontinuity. The implementation of our method further
requires certain qualitative information about $Q(z)$, such as its growth
rates at the origin and at infinity, the configuration of its zeros, and
its relevant symmetries.

The generality under which such an extension exists and the structure it
may take are important questions left for future research. However, an
explicit construction is often possible. One such example is provided by
the FARIMA$(p,d,q)$ process with spectral density \eqref{farima}, for which\vspace*{-2.5pt}
%
%e1.12 #&#
\begin{equation}
\label{Qfarima}
Q(z) = \frac {1} {2\pi} \big((1-z)(1-z^{-1})\big)^{-d}, \quad z\in
\mathbb C \setminus \mathbb R_{+}.
%%LEAP%%%\label{eq1.12}
\end{equation}
With the standard choice of the principal branch of the power function,
$Q(z)$ is sectionally holomorphic with a jump discontinuity across the
positive semi-axis $\mathbb R_{+}$. Applying our method to this case provides
an alternative proof of Theorem \ref{thm:I3}.

In this paper, we consider an ARMA-type process driven by fractional Gaussian
noise (fGn), which is the sequence of increments of fractional Brownian
motion (fBm). Like the FARIMA process, the fGn is a key element in the
study of long-memory processes (see \cite{PT17}). Its spectral density
takes a more complicated form (see \eqref{fGnf} below); nevertheless, the
corresponding extension $Q(z)$ can still be constructed explicitly as a
series \eqref{Qdef} involving special functions. We will establish the
validity of the asymptotics in \eqref{dan} and \eqref{alphan} for the fGn
case - a result that has, until now, remained elusive.

The fBm is the centered Gaussian process
$B^{H}=(B^{H}_{t} , t\in \mathbb R_{+})$ with continuous paths and covariance\vspace*{-2.5pt}
%
%e1.13 #&#
\begin{equation}
\label{fBm}
\mathsf{E}B^{H}_{s} B^{H}_{t} = \tfrac {1} {2} \big(s^{2H} + t^{2H} - |t-s|^{2H}
\big), \quad s,t\in \mathbb R_{+},
%%LEAP%%%\label{eq1.13}
\end{equation}
where $H\in (0,1)$ is the Hurst exponent. It is the unique Gaussian self-similar
process whose increments $\Delta B^{H}_{n} := B^{H}_{n}-B^{H}_{n-1}$ form
a stationary sequence \cite[Theorem 1.3.3]{EM02}. To establish an analogy
with the FARIMA process, we adopt the parametrization
$d:= H-\frac {1} {2}\in (-\frac {1} {2}, \frac {1} {2})$. The covariance
sequence of the fGn is readily deduced from \eqref{fBm}:\vspace*{-2.5pt}
%
%e1.14 #&#
\begin{equation}
\label{fGnc}
\gamma _{0}(k) = \tfrac {1} {2} \big(|k+1|^{2d+1} -2|k|^{2d+1} + |k-1|^{2d+1}
\big).
%%LEAP%%%\label{eq1.14}
\end{equation}
This sequence satisfies\vspace*{-2.5pt}
%
%e1.15 #&#
\begin{equation}
\label{asym_gamma_fGn}
\gamma _{0}(k) \sim d(2d+1) k^{2d-1}, \quad k\to \infty ,
%%LEAP%%%\label{eq1.15}
\end{equation}
and, consequently, the fGn exhibits long memory for
$d\in (0,\frac {1} {2})$. Its spectral density is given by the series\vspace*{-2.5pt}
%
%e1.16 #&#
\begin{equation}
\label{fGnf}
f_{0}(\lambda ) = c(d) |1-e^{i\lambda}|^{2} \sum _{k=-\infty}^{
\infty }|\lambda +2\pi k|^{-2d-2}, \quad \lambda \in (-\pi ,\pi ]
%%LEAP%%%\label{eq1.16}
\end{equation}
where $c(d) = \dfrac{1}{2\pi}\Gamma (2d+2)\cos (\pi d)$. Thus, the spectral
density of the associated fGn-driven ARMA-type process \eqref{f} has an
integrable power singularity at the origin for
$d\in (0,\frac {1}{2})$; specifically,\vspace*{-2.5pt}
\begin{equation*}
f(\lambda ) \sim \big| \theta (1)/\phi (1)\big|^{2}c(d) |\lambda |^{-2d},
\quad \lambda \to 0,
\end{equation*}
resembling the FARIMA$(p,d,q)$ density \eqref{farima}.\vadjust{\goodbreak}

\medskip
\noindent
We shall prove the following analogue of Theorem \ref{thm:I3}.

%t1.4 #&#
\begin{thmVTEX}%
%%LEAP%%%\label{thm1.4}
\label{main-thm}
Let $X$ be the fGn-diven ARMA-type process \eqref{f} where
$f_{0}(\lambda )$ is the spectral density \eqref{fGnf} of the fGn with
$d\in (-\frac {1} {2}, \frac {1} {2})\setminus \{0\}$. Assume that the
polynomials $\phi (\cdot )$ and $\theta (\cdot )$ satisfy condition
\textup{(A1)} and
\begin{enumerate}

\renewcommand{\theenumi}{$A2_{\phi}$}
\renewcommand{\labelenumi}{(\theenumi)}
\item[($A2_{\phi }$)]
%\label{A2phi}%
$\phi (\cdot )$ has no zeros in the closed unit disk,

\renewcommand{\theenumi}{$A2_{\theta}$}
\renewcommand{\labelenumi}{(\theenumi)}
\item[($A2_{\theta }$)]
%\label{A2theta}%
$\theta (\cdot )$ has no zeros on the unit circle.
\end{enumerate}

Then, the partial correlation of $X$ satisfies \eqref{alphan} and its relative
prediction error follows the asymptotics
%
%e1.17 #&#
\begin{equation}
\label{deluc}
\delta (n) \sim \left (\prod _{j: |z_{j}|<1}\frac {1} {z_{j}^{2}}
\right ) \sigma ^{2}_{0} \frac{d^{2}}{n}, \quad n\to \infty ,
%%LEAP%%%\label{eq1.17}
\end{equation}
where $z_{1},...,z_{q}$ are the zeros of $\theta (\cdot )$ and
$\sigma ^{2}_{0}$ is given by \eqref{SKfla} with $f(\cdot )$ replaced by
$f_{0}(\cdot )$.
\end{thmVTEX}

%r1 #&#
\begin{rem}
\label{rem1}
Unlike the processes covered by Theorem \ref{thm:I2}, the fGn-driven ARMA-type
process satisfies the asymptotics in \eqref{alphan} for all
$d\in (-\frac {1} {2}, \frac {1} {2})\setminus \{0\}$, consistent with
the behavior of the FARIMA process. Consequently, the conditions of Theorem
\ref{thm:I2} do not hold in this case either.
\end{rem}

%r2 #&#
\begin{rem}
\label{rem2}
The product factor in \eqref{deluc} arises because, unlike in condition
\textup{(A2)}, the MA polynomial
%
%e1.18 #&#
\begin{equation}
\label{theta_poly}
\theta (z) = \prod _{j=1}^{q} (1-z_{j}^{-1}z),
%%LEAP%%%\label{eq1.18}
\end{equation}
is not assumed to have all its zeros outside the unit disk. This factor
emerges naturally in our calculations; hence, we will proceed under the
weaker assumption, \textup{($A2_{\theta }$)}. It is worth noting, however, that assumption
\textup{(A2)} is not restrictive, as the same factor can be derived through
a simple transformation (see \cite[Remark 5 in \S 3.1]{BDbook91}).
\end{rem}

%r3 #&#
\begin{rem}
\label{rem3}
Theorem \ref{thm:IS} shows that the zeros of the MA polynomial on the unit
circle contribute to a hyperbolic rate. Other zeros have a geometrically
negligible effect, as demonstrated in equations \eqref{zeta} and
\eqref{geom} in the proofs below. Our method remains applicable when there
are zeros on the unit circle, although the calculations become more involved.
To illustrate this, consider the following relatively simple example, which
demonstrates how a zero at $-1$ interacts with the fGn singularity at 1
at the level of the asymptotic constants.

%e1 #&#
\begin{example}%
%%LEAP%%%\label{exmp1}
\label{ex:1}
Let $X$ be an fGn-driven ARMA$(q,1)$ process with
$d\in (-\frac {1} {2},0)$, an AR polynomial $\phi (z)$ as in Theorem
\ref{main-thm} and the MA polynomial $\theta (z)=1+z$. Then
%
%e1.19 #&#
\begin{equation}
\label{hyp}
\begin{aligned}
\delta (n) =\, \sigma ^{2}_{0}(d^{2}+1)n^{-1}+ O(n^{-2}),\quad
\alpha (n) =\, \big(d-(-1)^{n}\big)n^{-1} + O(n^{-2}),
\end{aligned}
\qquad n\to \infty .
%%LEAP%%%\label{eq1.19}
\end{equation}
These formulas are derived in Appendix \ref{app:E}. %\cite[Appendix G]{bjsup26}.
\end{example}
\end{rem}

%s1.4 #&#
\subsection{Frequent notations}
\label{sec1.4}

Throughout the paper, we adopt the following notations and conventions:

\begin{itemize}
\item Generic constants are denoted by $C,C_{1},C_{2}$, etc. Their values
are of no importance and may change from line to line.
\item The open unit disk in the complex plane is denoted by
$D:=\{z\in \mathbb C: |z|<1\}$, the unit circle by
$\partial D:=\{z\in \mathbb C: |z|=1\}$, and the closed disk by
$\overline D = D\cup \partial D$.
\item We use the standard hat notation for functions defined by Fourier
series:
\begin{equation*}
\widehat a(\lambda ) := \frac {1} {2\pi} \sum _{k=-\infty}^{\infty }a_{k}
e^{i\lambda k}, \quad \lambda \in (-\pi ,\pi ],
\end{equation*}
with the exception of spectral densities, for which the hat is omitted.
\item In the proofs, we employ standard tools from complex analysis
\cite{Sbook03} and, in particular, those pertaining to boundary value problems
\cite{Gahov}, such as the Sokhotski-Plemelj theorem
\cite[Ch1, \S 4.2]{Gahov}.
\item A complex function $F(\cdot )$ is said to be sectionally holomorphic
in $\mathbb C\setminus L$, where $L$ is a simple curve, if it is holomorphic
in $\mathbb C\setminus L$ and has finite limits at all points
$t\in L$ (except possibly at the endpoints, where it may have singularities).
In this paper, $L$ is typically a finite or infinite interval on the real
line $\mathbb R$. We denote the boundary values by
\begin{equation*}
F^{+}(t) = \lim _{z\to t^{+}}F(z), \quad F^{-}(t) = \lim _{z\to t^{-}}F(z),
\quad t\in L,
\end{equation*}
where $z\to t^{+}$ and $z\to t^{-}$ indicate that $z$ approaches
$t\in \mathbb R$ from the upper and lower half-planes, respectively.
\end{itemize}

\medskip

The rest of the paper is organized as follows. Section
\ref{sec:pre-proof} presents three theorems that together imply the assertion
of Theorem \ref{main-thm}; this section serves as a roadmap for the proof.
Each of these theorems is proved in a separate section thereafter. Section
\ref{sec:Q} summarizes the relevant properties of the extension
$Q(\cdot )$ of the fGn spectral density. Some calculations and auxiliary
results have been relegated to the appendices. % in the supplementary material \cite{bjsup26}.

%s2 #&#
\section{Proof of Theorem \ref{main-thm}}
%%LEAP%%%\label{sec2}
\label{sec:pre-proof}

Our approach is inspired by the spectral methods for weakly singular integral
operators, pioneered in \cite{Ukai} and \cite{P74,P03}, and their recent
applications to processes with fractional covariance structure (\cite{ChK},
\cite{AChKM22}). It applies to problems in which the quantity of interest
can be expressed as a functional of the solution to a linear equation.
The main idea is to reduce this equation to an equivalent Hilbert boundary
value problem in complex analysis, which becomes asymptotically more tractable
as $n\to \infty $. The implementation consists of three main steps.

\begin{enumerate}
\item[{1.}]
%\label{step1}%
A Hilbert problem is formulated such that a particular solution can be
constructed based on the linear equation in question. This is achieved
by considering the generating functions of the sequences associated with
the equation. The target functional is then expressed in terms of the relevant
components of the Hilbert problem.
\item[{2.}] The general solution to the Hilbert problem is expressed as a system
of coupled integral and algebraic equations. The value of the target functional
is directly related to the unknowns within the algebraic component of this
system.
\item[{3.}] The integro-algebraic system is shown to admit a unique solution
for all sufficiently large $n$. Consequently, the solution to the Hilbert
problem is also unique; it can therefore be identified with the particular
solution from Step \textup{(1)}. The limiting value of the functional
is determined through an asymptotic analysis as $n\to \infty $.
\end{enumerate}

In this section, we present three theorems that encapsulate the results
of each step of this program as applied to the problem under consideration.
Collectively, these theorems establish the assertion of Theorem
\ref{main-thm}. Their detailed proofs are provided in the subsequent sections
of the paper.

%r4 #&#
\begin{rem}%
%%LEAP%%%\label{rem4}
\label{rem:3}
We assume that all $q$ zeros of the MA polynomial $\theta (\cdot )$ are
simple. Theorem \ref{main-thm} remains true without this assumption; the
necessary adjustments to the proof in the case of zeros with multiplicities
are detailed in Appendix \ref{app:D}. %\cite[Appendix F]{bjsup26}.
\end{rem}

%s2.1 #&#
\subsection{The predictor equations}
%%LEAP%%%\label{sec2.1}
\label{sec:peq}

Our starting point is the system of linear equations for the predictor
coefficients \cite{BDbook91}. The forward and backward predictors are the
linear forms
\begin{equation*}
P_{[1,n-1]}X_{n}=\sum _{j=1}^{n-1} g_{n}(n-j) X_{j} \quad \text{and}
\quad P_{[1,n-1]}X_{0}=\sum _{j=1}^{n-1} g_{n}(j) X_{j},
\end{equation*}
where the weights $g_{n}(1),...,g_{n}(n-1)$ solve the system of linear
equations
%
%e2.1 #&#
\begin{equation}
\label{maineq}
\sum _{k=1}^{n-1} g_{n}(k) \gamma (j-k)=\gamma (j), \quad j= 1,...,n-1.
%%LEAP%%%\label{eq2.1}
\end{equation}
The corresponding prediction errors coincide,
\begin{equation*}
\mathsf{E}(X_{0}-P_{[1,n-1]}X_{0})^{2} = \mathsf{E}(X_{n}-P_{[1,n-1]}X_{n})^{2}
=\sigma ^{2}(n),
\end{equation*}
and can be expressed in terms of the solution to \eqref{maineq} using the
formula
%
%e2.2 #&#
\begin{equation}
\label{sigex}
\sigma ^{2}(n) = \gamma (0)-\sum _{j=1}^{n-1} g_{n}(j) \gamma (j).
%%LEAP%%%\label{eq2.2}
\end{equation}
A similar formula determines the covariance of the prediction errors,
%
%e2.3 #&#
\begin{equation}
\label{covex}
\mathsf{E}\big(X_{0}-P_{[1,n-1]}X_{0}\big)\big(X_{n} - P_{[1,n-1]}X_{n}
\big) = \gamma (n) -\sum _{j=1}^{n-1} g_{n}(j) \gamma (n-j),
%%LEAP%%%\label{eq2.3}
\end{equation}
and consequently the partial correlation coefficients in
\eqref{alpha}.

Let us define a pair of auxiliary sequences
%
%e2.4 #&#
\begin{equation}
\label{gLgR}
\begin{aligned}
g^{L}_{n}(j) :=
\begin{cases}
\gamma (j)-\sum _{k=1}^{n-1} g_{n}(k) \gamma (j-k), & j\le 0,
\\
0, & j >0,
\end{cases}
\\
g^{R}_{n}(j) :=
\begin{cases}
0, & j < n,
\\
\gamma (j)-\sum _{k=1}^{n-1} g_{n}(k) \gamma (j-k), & j \ge n,
\end{cases}
\end{aligned}
%
%%LEAP%%%\label{eq2.4}
\end{equation}
and extend the definition of $g_{n}(\cdot )$ to all integers by setting
$g_{n}(k)=0$ for $k\in \mathbb Z \setminus \{1,...,n-1\}$. Then equation
\eqref{maineq} can be rewritten as
%
%e2.5 #&#
\begin{equation}
\label{form}
g^{L}_{n}(j)+g^{R}_{n}(j) + \sum _{k=-\infty}^{\infty} g_{n}(k)
\gamma (j-k)=\gamma (j), \quad j\in \mathbb Z.
%%LEAP%%%\label{eq2.5}
\end{equation}
It follows from \eqref{sigex}--\eqref{covex} that the prediction error
and the partial correlation are related to the sequences in
\eqref{gLgR} through the formulas:
%
%e2.6 #&#
\begin{equation}
\label{sa}
\sigma ^{2}(n) = g^{L}_{n}(0) \quad \text{and}\quad \alpha (n) =
\frac{g^{R}_{n}(n)}{g^{L}_{n}(0)} \quad n\ge 2.
%%LEAP%%%\label{eq2.6}
\end{equation}

%s2.2 #&#
\subsection{The Hilbert problem}
\label{sec2.2}

As explained in the Introduction, our method requires that the spectral
density $f_{0}(\cdot )$ of the base process that drives the ARMA model
\eqref{f} admits a sectionally holomorphic extension to the complex plane,
cut along a simple curve. In the case of fGn with spectral density
\eqref{fGnf}, this extension is shown in Section \ref{sec:Q} to be given
by the formula
%
%e2.7 #&#
\begin{equation}
\label{Qdef}
Q(z) = \frac{1}{4\pi} (z^{-1} -2 + z) \big(\mathrm{Li}_{-2d-1}(z)+
\mathrm{Li}_{-2d-1}(z^{-1})), \quad z\in \mathbb C\setminus \mathbb R_{+},
%%LEAP%%%\label{eq2.7}
\end{equation}
where $\mathrm{Li}_{s}(z)$ is the polylogarithm \cite{L81}. In Theorem
\ref{lem:Q} below, we will argue that $Q(z)$ is sectionally holomorphic
in $\mathbb C \setminus \mathbb R_{+}$, satisfies \eqref{f0Q}, and, for
$d\in (0,\frac {1} {2})$, it has a power-type singularity at $z=1$. Furthermore,
it is non-vanishing for $d\in (-\tfrac {1} {2},0)$, and has a pair of real
reciprocal zeros $\{s_{0},s_{0}^{-1}\}$ for
$d\in (0,\tfrac {1} {2})$, with some $s_{0} \in (-1,0)$.

Define the generating functions of the sequences in \eqref{gLgR}:
%
%e2.8 #&#
\begin{equation}
\label{GG}
G_{0}(z) := \sum _{j=-\infty}^{\infty }g^{L}_{n}(j) z^{j}, \quad G_{1}(z)
:= \sum _{j=-\infty}^{\infty }g^{R}_{n}(n-j)z^{j}, \qquad z\in
\mathbb C\setminus \overline D.
%%LEAP%%%\label{eq2.8}
\end{equation}
These functions are holomorphic outside the unit disk. Let us extend their
definition to the open unit disk with a slit:
%
%e2.9 #&#
\begin{equation}
\label{GinD}
\begin{aligned}
G_{0}(z) :=\, & 2\pi \big(1- G(z) \big)
\frac{\theta (z)\theta (z^{-1})} {\phi (z)\phi (z^{-1})} Q(z)-z^{n}G_{1}(z^{-1}),
\\
G_{1}(z) :=\, & 2\pi \big(1- G(z^{-1})\big)z^{ n}
\frac{\theta (z)\theta (z^{-1})} {\phi (z)\phi (z^{-1})} Q(z) - z^{ n}
G_{0}(z^{-1}),
\end{aligned}
\quad z\in D\setminus [0,1],
%%LEAP%%%\label{eq2.9}
\end{equation}
where $G(z)$ is the generating function (polynomial) of the solution to
\eqref{maineq}:
%
%e2.10 #&#
\begin{equation}
\label{Gz}
G(z) := \sum _{j=-\infty}^{\infty }g_{n}(j) z^{j}=\sum _{j=1}^{n-1} g_{n}(j)
z^{j}, \quad z\in \mathbb C.
%%LEAP%%%\label{eq2.10}
\end{equation}
As will be shown later in Section \ref{sec:H1}, the definitions in
\eqref{GinD} are tailored so that $G_{0}(z)$ and $G_{1}(z)$ extend holomorphically
to the unit circle. This is due to equation \eqref{form} or, more precisely,
its Fourier domain equivalent \eqref{cond}. Thus, in fact, these functions
are sectionally holomorphic in $\mathbb C\setminus [0,1]$.

In view of \eqref{sa}, the quantities of interest are recovered from the
generating functions \eqref{GG} through the limits
%
%e2.11 #&#
\begin{equation}
\label{saG}
\sigma ^{2}(n) = \lim _{z\to \infty}G_{0}(z) \quad \text{and}\quad
\alpha (n) =
\frac{\lim _{z\to \infty}G_{1}(z)}{\lim _{z\to \infty}G_{0}(z)}.
%%LEAP%%%\label{eq2.11}
\end{equation}

The next theorem, whose proof appears in Section \ref{sec:pr1}, shows that
the functions defined in \eqref{GG}-\eqref{GinD}, with a slight modification,
solve a certain Hilbert boundary value problem.

%t2.1 #&#
\begin{thmVTEX}%
%%LEAP%%%\label{thm2.1}
\label{thm1}
Assume condition \textup{($A2_{\phi }$)} holds. Then, the functions
%
%e2.12 #&#
\begin{equation}
\label{Phi01}
\Phi _{0}(z) :=\, z^{q} \phi (z^{-1})G_{0}(z), \quad \Phi _{1}(z) :=
\, z^{q} \phi (z^{-1})G_{1}(z),
%%LEAP%%%\label{eq2.12}
\end{equation}
are sectionally holomorphic in $\mathbb C\setminus [0,1]$ and solve the
Hilbert boundary value problem with the following specifications.

\medskip

\begin{enumerate}
\renewcommand{\theenumi}{H\arabic{enumi}}
\renewcommand{\labelenumi}{(\theenumi)}
\item[(H1)]
%\label{H1}%
The boundary condition:
%
%e2.13 #&#
\begin{equation}
\label{bnd}
\begin{aligned}
& \Phi _{0}^{+}(t) -\frac{Q^{+}(t)}{Q^{-}(t)}\Phi _{0}^{-}(t) = t^{n+2q}
\Phi _{1}(t^{-1}) \frac{\phi (t^{-1})}{\phi (t)} \Big(
\frac{Q^{+}(t)}{Q^{-}(t)}- 1\Big),
\\
& \Phi _{1}^{+}(t)- \frac{Q^{+}(t)}{ Q^{-}(t)}\Phi _{1}^{-}(t) = t^{ n+2q}
\Phi _{0}(t^{-1})\frac{\phi (t^{-1})}{\phi (t)}\Big(
\frac{Q^{+}(t)}{ Q^{-}(t)} -1\Big),
\end{aligned}
\qquad t\in (0,1).
%%LEAP%%%\label{eq2.13}
\end{equation}
\item[(H2)]
%\label{H2}%
The growth rates:
%
%e2.14 #&#
\begin{equation}
\label{Phi_est}
\begin{aligned}
\big\{\Phi _{0}(z), \Phi _{1}(z)\big\} =
\begin{cases}
O(z^{-1} (\log z^{-1})^{-1-2d}), & z\to 0,
\\
O((z-1)^{-2(d\vee 0)}), & z\to 1,
\\
O(z^{q}), & z\to \infty .
\end{cases}
\end{aligned}
%
%%LEAP%%%\label{eq2.14}
\end{equation}
\item[(H3)]
%\label{H3}%
The algebraic condition:
%
%e2.15 #&#
\begin{equation}
\label{algcond}
\Phi _{0}(z)\phi (z) + z^{n+2q} \Phi _{1}(z^{-1}) \phi (z^{-1})=0,
\quad \forall \, z\in \{z_{1},...,z_{q}\}\cup \{z_{1}^{-1},...,z_{q}^{-1}
\}\cup Z_{Q},
%%LEAP%%%\label{eq2.15}
\end{equation}
where $z_{j}$'s are the zeros of the MA polynomial $\theta (\cdot )$ and
$Z_{Q}$ is the zero set of $Q(\cdot )$.
\item[(H4)]
%\label{H4}%
The scaling condition:
%
%e2.16 #&#
\begin{equation}
\label{scond}
\lim _{z\to 0} \frac{\Phi _{0}(z)}{Q(z)} = 2\pi \prod _{j=1}^{q}
\big(- z_{j}^{-1}\big), \qquad \lim _{z\to 0}
\frac{\Phi _{1}(z)}{Q(z)} =0.
%%LEAP%%%\label{eq2.16}
\end{equation}
\end{enumerate}
\end{thmVTEX}

%s2.3 #&#
\subsection{Solution to the Hilbert problem}
\label{sec2.3}

In this section, we will argue that any solution to the Hilbert problem
\textup{(H1)}-\textup{(H4)} can be expressed in terms of solutions to a certain
system of integral and algebraic equations. To formulate the precise result
we will need to introduce several objects, some of which have a slightly
different form depending on the sign of $d$. Define the function
%
%e2.17 #&#
\begin{equation}
\label{X0def}
X_{0}(z) = \exp \left (\frac {1} {\pi }\int _{0}^{1}
\frac{\arg (Q^{+}(\tau ))}{\tau -z} d\tau \right ), \quad z\in
\mathbb C\setminus [0,1],
%%LEAP%%%\label{eq2.17}
\end{equation}
where $\arg (\cdot )$ takes values in $(-\pi ,\pi ]$, and let
%
%e2.18 #&#
\begin{equation}
\label{XX0}
X(z) :=
\begin{cases}
z^{-1} X_{0}(z), & d\in (0,\tfrac {1} {2}),
\\
X_{0}(z), & d\in (-\tfrac {1} {2},0).
\end{cases}
%
%%LEAP%%%\label{eq2.18}
\end{equation}
This function is sectionally holomorphic in
$\mathbb C\setminus [0,1]$ and satisfies the homogeneous boundary condition
%
%e2.19 #&#
\begin{equation}
\label{XcQ}
\frac{X^{+}(t)}{X^{-}(t)} = \frac{Q^{+}(t)}{Q^{-}(t)}, \quad t\in (0,1).
%%LEAP%%%\label{eq2.19}
\end{equation}
Define the function
%
%e2.20 #&#
\begin{equation}
\label{hdef}
h(s):= \frac {1}{2i}\frac{1}{ \sin (\pi d)}
\frac{\phi (e^{s})}{\phi (e^{-s})} \left (
\frac {X(e^{s})}{X^{+}(e^{-s})}-\frac{X(e^{s})}{X^{-}(e^{-s})}
\right )e^{-2q s}, \quad s\in \mathbb R_{+}.
%%LEAP%%%\label{eq2.20}
\end{equation}
A calculation in Lemma \ref{lem:tildeh} below shows that it is real-valued
and differentiable, with $\lim _{s\to 0} h(s)=1$. Consider the integral
equations, $j=0,...,q+1$,
%
%e2.21 #&#
\begin{equation}
\label{uweq}
\begin{aligned}
& u_{j }(t) =\
\phantom+
\frac {\sin (\pi d) }{ \pi } \int _{0}^{\infty }
\frac{h(s)e^{-n s}}{e^{s+t}-1} u_{j }(s) ds + e^{jt},
\\
& w_{j }(t) = -\frac {\sin (\pi d) }{ \pi } \int _{0}^{\infty }
\frac{h(s)e^{-n s}}{e^{s+t}-1} w_{j }(s) ds + e^{jt},
\end{aligned}
\qquad t\in \mathbb R_{+},
%%LEAP%%%\label{eq2.21}
\end{equation}
It will be shown that, for all sufficiently large $n$, these equations
have unique solutions $u_{j,n}$ and $w_{j,n}$, such that the functions
$ u_{j,n}(t) -e^{jt} $ and $ w_{j,n}(t) -e^{jt} $ belong to
$L^{2}(\mathbb R_{+})$. Using these solutions, define
%
%e2.22 #&#
\begin{equation}
\label{SjDj}
\begin{aligned}
S_{j,n}(z) := &\
\phantom{+}
\frac {\sin (\pi d) }{ \pi } \int _{0}^{\infty }
\frac{h(s)e^{-n s}}{z e^{s }-1} u_{j,n}(s) ds + z^{j},
\\
D_{j,n}(z) := & -\frac {\sin (\pi d) }{ \pi } \int _{0}^{\infty }
\frac{h(s) e^{-n s}}{ze^{s}-1} w_{j,n}(s) ds + z^{j},
\end{aligned}
\qquad z\in \mathbb C\setminus [0,1].
%%LEAP%%%\label{eq2.22}
\end{equation}

As mentioned above, the function $Q(z)$ defined in \eqref{Qdef} has a pair
of reciprocal zeros $\{s_{0}, s_{0}^{-1}\}$ with $s_{0}\in (-1,0)$ if
$d\in (0,\tfrac {1} {2})$, and it has no zeros if
$d\in (-\tfrac {1} {2},0)$. Accordingly, let us define
$z_{q+1} := s_{0}^{-1}$ for $d\in (0, \frac {1} {2})$ and
\begin{equation*}
q(d)=
\begin{cases}
q+1, & d\in (0,1/2),
\\
q, & d\in (-1/2,0).
\end{cases}
\end{equation*}

Consider a pair of systems of linear algebraic equations
%
%e2.23 #&#
\begin{equation}
\label{aeq}
\begin{aligned}
& \sum _{j=0}^{q(d)} \left ( X(z_{k})\phi (z_{k}) S_{j,n}(z_{k})+ z_{k}^{n+2q}
X(z_{k}^{-1}) \phi (z_{k}^{-1}) S_{j,n}(z_{k}^{-1})\right ) a_{j} =0,
\quad k=1,...,q(d),
\\
& \sum _{j=0}^{q(d)} S_{j,n}(0) a_{j} = \frac {1} {2} \sigma _{0}^{2}
\prod _{j=1}^{q(d)} (-1/z_{j}),
\end{aligned}
%
%%LEAP%%%\label{eq2.23}
\end{equation}
%
%e2.24 #&#
\begin{equation}
\label{beq}
\begin{aligned}
& \sum _{j=0}^{q(d)} \left ( X(z_{k})\phi (z_{k}) D_{j,n}(z_{k}) - z_{k}^{n+2q}
X(z_{k}^{-1}) \phi (z_{k}^{-1}) D_{j,n}(z_{k}^{-1})\right ) b_{j} =0,
\quad k=1,...,q(d),
\\
& \sum _{j=0}^{q(d)} D_{j,n}(0) b_{j} = \frac {1} {2} \sigma _{0}^{2}
\prod _{j=1}^{q(d)} (-1/z_{j}),
\end{aligned}
%
%%LEAP%%%\label{eq2.24}
\end{equation}
with respect to the unknowns $a_{0},...,a_{q(d)}$ and
$b_{0},..., b_{q(d)}$. The next theorem, proved in Section
\ref{sec:5}, provides a general solution to the Hilbert problem from Theorem
\ref{thm1}.

%t2.2 #&#
\begin{thmVTEX}%
%%LEAP%%%\label{thm2.2}
\label{thm2}
Assume condition \textup{($A2_{\phi }$)} holds. Let
$\{\Phi _{0}(z),\Phi _{1}(z)\}$ be any solution to the Hilbert problem
\textup{(H1)}-\textup{(H4)} from Theorem \ref{thm1}. Then
\begin{align*}
\Phi _{0}(z) =\, & X(z) \Big(\sum _{j=0}^{q(d)} a_{j,n} S_{j,n}(z)+
\sum _{j=0}^{q(d)} b_{j,n} D_{j,n}(z)\Big),
\\
\Phi _{1}(z) =\, & X(z) \Big(\sum _{j=0}^{q(d)} a_{j,n} S_{j,n}(z)-
\sum _{j=0}^{q(d)} b_{j,n} D_{j,n}(z)\Big),
\end{align*}
where $(a_{0,n},...,a_{q(d),n})$ and $(b_{0,n},...,b_{q(d),n})$ solve
\eqref{aeq} and \eqref{beq}, respectively. For the particular solution
given by the functions defined in \eqref{Phi01},
%
%e2.25 #&#
\begin{equation}
\label{ab2salpha}
\sigma ^{2}(n) = a_{q(d),n}+b_{q(d),n} \quad \text{and}\quad \alpha (n)
= \frac{a_{q(d),n}-b_{q(d),n}}{a_{q(d),n}+b_{q(d),n}}.
%%LEAP%%%\label{eq2.25}
\end{equation}
\end{thmVTEX}

%r5 #&#
\begin{rem}
\label{rem5}
Systems \eqref{aeq} and \eqref{beq} are guaranteed to have at least one
solution, corresponding to the functions in \eqref{Phi01}. Let us stress
that \eqref{ab2salpha} is claimed to hold only for this solution. Note,
however, that at this stage, uniqueness is not claimed. Such uniqueness
is crucial if the asymptotics of $\sigma ^{2}(n)$ and $\alpha (n)$ are
to be derived from \eqref{ab2salpha}. It is verified asymptotically as
$n\to \infty $ in Theorem \ref{thm3} in the next section.
\end{rem}

%s2.4 #&#
\subsection{Asymptotic analysis}
\label{sec2.4}

While excessively complicated for a fixed $n$, the systems of linear equations
from the previous subsection are more tractable asymptotically as
$n\to \infty $. In fact, they turn out to have a certain convenient Vandermonde
structure, see Section \ref{sec:V}, which yields the following result.

%t2.3 #&#
\begin{thmVTEX}%
%%LEAP%%%\label{thm2.3}
\label{thm3}
Assume conditions \textup{(A1)} and \textup{($A2_{\phi }$)}, \textup{($A2_{\theta }$)} hold. For
$d\in (-\frac {1}{2}, \frac {1} {2})\setminus \{0\}$, systems
\eqref{aeq} and \eqref{beq} have unique solutions for all sufficiently
large $n$. The solutions satisfy
%
%e2.26 #&#
\begin{equation}
\label{ablim}
\begin{aligned}
a_{q(d),n} = & \frac {\sigma ^{2} } {2} \Big( 1+ \frac {d (1+d)} {n}
\Big) + O(n^{-2}),
\\
b_{q(d),n} = & \frac {\sigma ^{2} } {2} \Big(1 - \frac {d (1-d)} {n}
\Big) + O(n^{-2}),
\end{aligned}
\qquad n\to \infty
%%LEAP%%%\label{eq2.26}
\end{equation}
where
$ \sigma ^{2} = \sigma ^{2}_{0}\bigg(\displaystyle \prod _{j:|z_{j}|<1}
z_{j}^{-2}\bigg) $ and $\sigma ^{2}_{0}$ is given by the Szeg\"{o}-Kolmogorov
formula
\begin{equation*}
\sigma ^{2}_{0} = 2\pi \exp \left (\frac {1}{2\pi } \int _{-\pi}^{\pi}
\log f_{0}(\lambda ) d\lambda \right ),
\end{equation*}
with $f_{0}(\cdot )$ being the fGn spectral density \eqref{fGnf}.
\end{thmVTEX}

The assertion of Theorem \ref{main-thm} follows by plugging estimates
\eqref{ablim} into \eqref{ab2salpha}.
%
%c2.4 #&#
\begin{cor}%
%%LEAP%%%\label{cor2.4}
\label{cor:main}
The relative prediction error and the partial correlation coefficients
satisfy
\begin{equation*}
\sigma ^{2}(n) - \sigma ^{2} = \sigma ^{2} \frac{d^{2}} {n} + O(n^{-2}),
\quad \alpha (n) = \frac {d}{n} + O(n^{-2}), \qquad n\to \infty .
\end{equation*}
\end{cor}

%s3 #&#
\section{The function $\mathbf{Q(z)}$ and its properties}
%%LEAP%%%\label{sec3}
\label{sec:Q}

As previously mentioned, the key element of our approach is the sectionally
holomorphic extension $Q(z)$ of the spectral density of the innovations
driving the ARMA process. For fGn, this extension takes a rather complicated
form \eqref{Qdef} involving special functions - specifically, polylogarithms.
The following theorem derives the main properties of $Q(z)$ required for
our analysis.

%t3.1 #&#
\begin{thmVTEX}%
%%LEAP%%%\label{thm3.1}
\label{lem:Q}
Let $d\in (-\frac {1} {2}, \frac {1} {2})\setminus \{0\}$.

\begin{enumerate}
\renewcommand{\theenumi}{\roman{enumi}}
\renewcommand{\labelenumi}{(\theenumi)}
\item[(i)]
%\label{Qi}%
The function $Q(\cdot )$ satisfies the symmetries
%
%e3.1 #&#
\begin{equation}
\label{Qsym}
Q(z)=Q(z^{-1}), \quad \overline{Q(z)}=Q(\overline z), \qquad z\in
\mathbb C\setminus \mathbb R_{+}.
%%LEAP%%%\label{eq3.1}
\end{equation}
Its limits $Q^{\pm}(t) = \lim _{z\to t^{\pm}}Q(z)$ satisfy
%
%e3.2 #&#
\begin{equation}
\label{Qpmsym}
Q^{\pm }(t) = Q^{\mp}(t^{-1}), \quad \overline{Q^{\pm}(t)}= Q^{\mp}(t),
\qquad t\in \mathbb R_{+}\setminus \{1\},
%%LEAP%%%\label{eq3.2}
\end{equation}
and
%
%e3.3 #&#
\begin{equation}
\label{ImQ}
\mathrm{sign}\big(\mathrm{Im}(Q^{+}(t))\big) =
\begin{cases}
-\mathrm{sign}(d), & t\in (0,1),
\\
\phantom +
\mathrm{sign}(d), & t\in (1,\infty ).
\end{cases}
%
%%LEAP%%%\label{eq3.3}
\end{equation}
\item[(ii)]
%\label{Qii}%
Identity \eqref{f0Q} holds for the fGn spectral density
$f_{0}(\cdot )$ from \eqref{fGnf}.
\item[(iii)]
%\label{Qiii}%
The following growth estimates hold:
%
%e3.4 #&#
\begin{equation}
\label{Qest}
Q(z) =
\begin{cases}
O\big(z^{-1} (\log z^{-1})^{-1-2d}\big), & z\to 0,
\\
O\big((z-1)^{-2d}\big), & z\to 1,
\\
O\big(z(\log z)^{-1-2d}\big), & z\to \infty .
\end{cases}
%
%%LEAP%%%\label{eq3.4}
\end{equation}
\item[(iv)]
%\label{Qiv}%
For $d\in (0,\frac {1} {2})$, $Q(\cdot )$ has at least one zero in the
interval $(-1,0)$.
\item[(v)]
%\label{Qv}%
The function $\eta (t)=\arg (Q^{+}(t))\in (-\pi ,\pi ]$ has derivatives
of all orders, satisfies the property
%
%e3.5 #&#
\begin{equation}
\label{etaeta}
\eta (t)=-\eta (t^{-1}), \quad t\in \mathbb R_{+}\setminus \{1\},
%%LEAP%%%\label{eq3.5}
\end{equation}
and the estimates
%
%e3.6 #&#
%e3.7 #&#
\begin{align}
\label{etalim1}
\eta (t) =\, &- d\pi + O((1-t)^{2+2d}), & t\nearrow 1,
%%LEAP%%%\label{eq3.6}
\\
\label{etalim0}
\eta (t) =\, & \pi \mathbf{1}_{\{d<0\}}+\frac {c} {\log t^{-1}} + O((
\log t^{-1})^{-2}), & t \searrow 0,
%%LEAP%%%\label{eq3.7}
\end{align}
for some constant $c\in \mathbb R$ (dependent on $d$).
\end{enumerate}
\end{thmVTEX}

\begin{proof}
See Appendix \ref{ssec:Qprop}. %\cite[Appendix A.3]{bjsup26}.
\end{proof}

%s3.1 #&#
\subsection{The function $\mathbf{X_{0}(z)}$}
\label{sec3.1}

Another important element, closely related to $Q(\cdot )$, is the function
introduced in \eqref{X0def},
%
%e3.8 #&#
\begin{equation}
\label{X0}
X_{0}(z) = \exp \left (\frac {1}{ \pi }\int _{0}^{1}
\frac{\eta (\tau )}{\tau -z}d\tau \right ), \quad z\in \mathbb C
\setminus [0,1],
%%LEAP%%%\label{eq3.8}
\end{equation}
where $\eta (\tau ) =\arg (Q^{+}(\tau ))$, cf. Theorem \ref{lem:Q}\textup{(v)}.
In view of \eqref{Qpmsym},
\begin{equation*}
\frac{Q^{+}(t)}{Q^{-}(t)}= \exp \big(2i\arg (Q^{+}(t))\big), \quad t
\in (0,1).
\end{equation*}
Therefore, by the Sokhotski-Plemelj theorem, $X_{0}(z)$ solves the homogeneous
Hilbert problem \eqref{XcQ}. The next lemma summarizes its essential properties.

%l3.2 #&#
\begin{lem}%
%%LEAP%%%\label{lem3.2}
\label{lem:X0}
For each $d\in (-\frac {1} {2}, \frac {1} {2})\setminus \{0\}$, there exist
nonzero constants $a\in \mathbb R$ and $c_{1},c_{2}\in \mathbb C$ such
that
%
%e3.9 #&#
\begin{equation}
\label{X0est}
X_{0}(z) =
\begin{cases}
c_{1} z^{- \mathbf{1}_{\{d<0\}}} (\log z)^{a} \big(1+O\big((\log z)^{-1}
\big)\big), & z\to 0,
\\
c_{2} (z-1)^{-d} \big(1+O(z-1)\big), & z\to 1,
\\
1+O(z^{-1}), & z\to \infty .
\end{cases}
%
%%LEAP%%%\label{eq3.9}
\end{equation}
\end{lem}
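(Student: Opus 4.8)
The function $X_0$ is already known to be sectionally holomorphic in $\mathbb C\setminus[0,1]$, so only the three growth estimates in \eqref{X0est} need to be established. The plan is to analyze the exponent
$
\log X_0(z) = \frac1\pi\int_0^1\frac{\eta(\tau)}{\tau-z}\,d\tau
$
separately near $z=\infty$, $z=1$ and $z=0$, in each case splitting $\eta$ into a constant equal to the relevant endpoint value plus a remainder that vanishes there, treating the ``far'' endpoint contribution as an analytic term, and then exponentiating. The inputs are the endpoint asymptotics \eqref{etalim1}, \eqref{etalim0} and the smoothness of $\eta$ from Theorem~\ref{lem:Q}\eqref{Qv}, together with classical estimates for Cauchy-type integrals near the ends of the contour \cite{Gahov}.

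\emph{Near $z=\infty$.} Since $\eta$ is bounded, $\int_0^1|\eta(\tau)|\,d\tau<\infty$, and expanding $(\tau-z)^{-1}=-z^{-1}\sum_{k\ge0}(\tau/z)^k$ gives $\log X_0(z) = -\frac1{\pi z}\int_0^1\eta(\tau)\,d\tau + O(z^{-2}) = O(z^{-1})$, hence $X_0(z)=1+O(z^{-1})$, the third line of \eqref{X0est}.

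\emph{Near $z=1$.} Write $\eta(\tau)=-d\pi+g(\tau)$ with $g:=\eta+d\pi$; by \eqref{etalim1}, $g$ is smooth and bounded on $(0,1)$ with $g(\tau)=O((1-\tau)^{2+2d})$ as $\tau\nearrow1$, in particular $g$ vanishes to order $>1$ at the right endpoint (recall $2+2d>1$). The elementary part contributes $\exp(-d\log\frac{1-z}{-z})=\big(\frac{1-z}{-z}\big)^{-d}=(z-1)^{-d}\big(1+O(z-1)\big)$, since $\frac{1-z}{-z}=(z-1)(1+O(z-1))$ as $z\to1$. For the remainder $E(z):=\frac1\pi\int_0^1\frac{g(\tau)}{\tau-z}\,d\tau$ one has, for $z,z'$ near $1$ in $\mathbb C\setminus[0,1]$,
$$
E(z)-E(z') = \frac{z-z'}{\pi}\int_0^1\frac{g(\tau)}{(\tau-z)(\tau-z')}\,d\tau,
$$
and the integral is uniformly bounded because $|g(\tau)|\lesssim(1-\tau)^{2+2d}$ keeps the integrand integrable near $\tau=1$ as $z,z'\to1$ (using $2d>-1$). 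Thus $E$ is Lipschitz at $z=1$, $e^{E(z)}=e^{E(1)}(1+O(z-1))$, and $X_0(z)=c_2(z-1)^{-d}(1+O(z-1))$ with $c_2=e^{E(1)}$, the second line of \eqref{X0est}.

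\emph{Near $z=0$, and the main obstacle.} Write $\eta(\tau)=\pi\,\one{d<0}+\rho(\tau)$ with $\rho:=\eta-\pi\,\one{d<0}$; by \eqref{etalim0}, $\rho$ is smooth and bounded on $(0,1]$, $\rho(0^+)=0$, but $\rho(\tau)=\frac{c}{\log\tau^{-1}}+O((\log\tau^{-1})^{-2})$ is only \emph{logarithmically} continuous at $0$, so the Plemelj/Privalov endpoint estimates do not apply verbatim — this is the one genuinely non-routine point. The elementary part contributes $\exp(\one{d<0}\log\frac{1-z}{-z})=\big(\frac{1-z}{-z}\big)^{\one{d<0}}=c'z^{-\one{d<0}}(1+O(z))$ as $z\to0$. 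For the remainder $R(z):=\frac1\pi\int_0^1\frac{\rho(\tau)}{\tau-z}\,d\tau$ the clean route is the substitution $\tau=e^{-t}$ with $w:=z^{-1}$, which turns the ``$1/\log$'' density into a ``$1/t$'' density on a half-line:
$$
R(z)=\frac1\pi\int_0^\infty\rho(e^{-t})\,\frac{w}{w-e^t}\,dt,\qquad \rho(e^{-t})=\frac ct+O(t^{-2})\ \ (t\to\infty).
$$
Since $\frac{w}{w-e^t}$ is a smoothed indicator of $\{0\le t\le\log|w|\}$ up to a transition layer of bounded width around $t=\log|w|$ — where $\rho(e^{-t})=O(1/\log|w|)$ vanishes — one gets $R(z)=\frac c\pi\int_{t_0}^{\log|w|}t^{-1}\,dt+C+O((\log|w|)^{-1})=\frac c\pi\log\log z^{-1}+C+O((\log z)^{-1})$, the $O((\log z)^{-1})$ absorbing both the $O(t^{-2})$ tail and the transition layer; uniformity requires $z\to0$ away from the cut $[0,1]$. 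Exponentiating gives $e^{R(z)}=e^C(\log z)^{a}(1+O((\log z)^{-1}))$ with $a=c/\pi$ (the branch-dependent difference between $(\log z^{-1})^a$ and $(\log z)^a$ is a constant times $1+O((\log z)^{-1})$), and $a\ne0$ since $c\ne0$ — indeed matching the resulting $X_0(z)\asymp z^{-\one{d<0}}(\log z)^{a}$ against \eqref{Qest} and \eqref{XcQ} forces $a=-(1+2d)$. Combining with the elementary part yields the first line of \eqref{X0est} with $c_1=c'e^{C}$.
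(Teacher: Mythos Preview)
Your treatment at $z=\infty$ and $z=1$ is essentially identical to the paper's: split off the constant endpoint value of $\eta$, integrate it explicitly, and use the vanishing of the remainder at the endpoint to get a Lipschitz (hence $1+O(z-1)$) correction.

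At $z=0$ you take a genuinely different route. The paper integrates $\int_0^{1/2}\frac{1}{\log\tau^{-1}}\frac{d\tau}{\tau-z}$ by parts to produce $\int_0^{1/2}\frac{\log\log\tau}{\tau-z}\,d\tau$ and then invokes an explicit asymptotic formula of Mel'nik \cite{M59} for that integral; this is clean but relies on an external reference. Your substitution $\tau=e^{-t}$, $w=z^{-1}$, followed by the ``smoothed indicator'' decomposition of $\frac{w}{w-e^t}$, is more self-contained and gives the same $\frac{c}{\pi}\log\log z^{-1}+C+O((\log z)^{-1})$ directly. As written it is a sketch: to make it a proof you need to split $\int_0^\infty$ at a fixed $T_0$ and at $\log|w|\pm M$, bound the transition and tail pieces using $|1-e^{s-i\arg w}|\ge|\sin\arg w|$ (this is where approaching $0$ in a sector away from the cut enters), and check that the $O(t^{-2})$ tail of $\rho(e^{-t})$ contributes a constant plus $O((\log|w|)^{-1})$. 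All of this goes through, but the current one-sentence version hides real work.

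One caveat: your closing remark that matching against \eqref{Qest} and \eqref{XcQ} ``forces $a=-(1+2d)$'' is not justified. The boundary condition \eqref{XcQ} is what $X_0$ was built to satisfy and gives no new information, and \eqref{Qest} concerns $Q$, not $X_0$; the identification $a=-(1+2d)$ really comes from the relation $X_0(z)=\psi(z)Q(z)\cdot(\text{rational})$ of Lemma~\ref{lem:identity}, which is proved \emph{after} Lemma~\ref{lem:X0}. The paper itself only records $a=c/\pi$ and does not verify $c\ne 0$ within this proof either, so this is a minor point rather than a defect unique to your argument.
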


\begin{proof}
See Appendix \ref{ssec:X0}. %\cite[Appendix A.4]{bjsup26}. 
\end{proof}

%s3.2 #&#
\subsection{One identity and its implications}
\label{sec3.2}

Define
%
%e3.10 #&#
\begin{equation}
\label{psizdef}
\psi (z) := \exp \left (-\frac{1}{2\pi i}\oint _{\partial D}
\frac{\log Q(\zeta )}{\zeta -z}d\zeta \right ), \quad z\in \mathbb C
\setminus \partial D.
%%LEAP%%%\label{eq3.10}
\end{equation}
This function is sectionally holomorphic in
$\mathbb C\setminus \partial D$. It arises in the analysis of the corresponding
infinite predictor problem. In particular, due to the Szeg\"{o}-Kolmogorov
formula \eqref{SKfla} to $f_{0}(\cdot )$, we have
%
%e3.11 #&#
\begin{equation}
\label{psi0}
\psi (0) = \exp \left (-\frac{1}{2\pi i}\int _{-\pi}^{\pi}
\frac{\log f_{0}(\lambda )}{e^{i\lambda} }de^{i\lambda}\right )=
\frac{2\pi}{\sigma _{0}^{2}}.
%%LEAP%%%\label{eq3.11}
\end{equation}
The next lemma formulates a useful relation between $X_{0}(z)$ and
$Q(z)$. In the course of its proof, the precise number of zeros of
$Q(z)$ is revealed, cf. Theorem \ref{lem:Q} \textup{(iv)}.

%l3.3 #&#
\begin{lem}%
%%LEAP%%%\label{lem3.3}
\label{lem:identity}
\
\medskip

1. For $d\in (0,\frac {1} {2})$, $Q(z)$ has a pair of zeros
$\{s_{0}, s_{0}^{-1}\}$ with $s_{0} \in (-1,0)$ and satisfies
%
%e3.12 #&#
\begin{equation}
\label{XQfla}
X_{0}(z) = \psi (z) Q(z)\dfrac{z}{z-s_{0}}, \quad z\in D.
%%LEAP%%%\label{eq3.12}
\end{equation}

2. For $d\in (-\frac {1} {2},0)$, $Q(z)$ is non-vanishing in
$\mathbb C\setminus \mathbb R_{+}$, and satisfies
%
%e3.13 #&#
\begin{equation}
\label{XQfla2}
X_{0}(z) = \psi (z) Q(z), \quad z\in D.
%%LEAP%%%\label{eq3.13}
\end{equation}
\end{lem}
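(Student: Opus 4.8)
\textbf{Proof proposal for Lemma \ref{lem:identity}.}

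The plan is to compare two sectionally holomorphic functions on the slit domain by matching their boundary behaviour across the cuts and their growth at the distinguished points $0$, $1$, $\infty$. Both $X_0(z)$ and the candidate expression $\psi(z)Q(z)$ (times the Blaschke-type factor $z/(z-s_0)$ in the case $d>0$) satisfy the \emph{same} multiplicative jump relation: by construction $X_0^+/X_0^-=Q^+/Q^-$ on $(0,1)$ from the Sokhotski-Plemelj formula, while $\psi$ has its jump on $\partial D$ and is holomorphic and non-vanishing across $(0,1)$, so $\psi Q$ inherits the jump $Q^+/Q^-$ there as well. Hence the ratio $R(z):=X_0(z)/\big(\psi(z)Q(z)\big)$ (in the $d<0$ case) is single-valued and holomorphic across the interval $(0,1)$; one must then check it extends holomorphically across $\partial D$ too, which is where $\psi$ jumps — here the point is that the jump of $\psi$ across $\partial D$ is exactly $Q$, i.e. $\psi^+/\psi^-$ on $\partial D$ equals $1/Q$ or $Q$ depending on orientation, so the product $\psi Q$ is continuous across $\partial D$ up to the known relation, and $X_0$ is holomorphic there. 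After removing all jumps, $R$ is holomorphic on $\mathbb C\setminus\{0,1,\infty\}$, and the growth estimates of Lemma \ref{lem:X0} for $X_0$ together with those of Lemma \ref{lem:Q}(iii) for $Q$ (and the normalization $\psi(z)\to 1$ as $z\to\infty$, $\psi(0)=2\pi/\sigma_0^2$) force $R$ to have at worst removable singularities at $0$ and $1$ and to be bounded at $\infty$; by Liouville it is a constant, and the constant is pinned down by evaluating at one convenient point (e.g. $z\to\infty$, using $X_0(\infty)=1$ and comparing leading terms, or at $z=0$ via \eqref{psi0}).

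For $d\in(-\frac12,0)$ this argument is essentially complete once one verifies that $Q$ has no zeros in $\mathbb C\setminus\Real_+$: if $Q$ had a zero there, $\psi Q$ would vanish but $X_0$ — being an exponential — does not, contradicting that $R$ is a nonzero constant. So the non-vanishing of $Q$ for $d<0$ is \emph{deduced} from the identity rather than assumed; concretely, one runs the Liouville argument to conclude $X_0=\text{const}\cdot\psi Q$ as meromorphic functions, and then the absence of poles of $X_0$ forces $Q$ to be zero-free. For $d\in(0,\frac12)$, Lemma \ref{lem:Q}(iv) already gives \emph{at least one} zero of $Q$ in $(-1,0)$; the symmetry $Q(z)=Q(z^{-1})$ pairs it with a zero at the reciprocal point in $(-\infty,-1)$. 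The Blaschke-type factor $z/(z-s_0)$ is inserted precisely to cancel the zero of $Q$ at $s_0$ inside $D$ (it is holomorphic and non-vanishing on $D\setminus\{0\}$, and the factor of $z$ in the numerator compensates the pole that would otherwise be introduced at $0$, matching the $z^{-\mathbf 1\{d<0\}}$ prefactor discrepancy in Lemma \ref{lem:X0} — note for $d>0$ that exponent is $0$). Running the same Liouville argument with $\psi(z)Q(z)\,z/(z-s_0)$ in place of $\psi Q$ then yields a constant ratio on $D$; moreover, the fact that $X_0$ has no other zeros or poles in $D$ forces $Q$ to have \emph{exactly} the one zero $s_0$ in $D$ (equivalently, exactly the pair $\{s_0,s_0^{-1}\}$ in $\mathbb C\setminus\Real_+$), which is the claimed sharpening of Lemma \ref{lem:Q}(iv).

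The main obstacle I anticipate is the bookkeeping at the endpoints $z=0$ and $z=1$ of the cut, where both $X_0$ and $Q$ are genuinely singular (power-type at $1$, logarithmic-power-type at $0$), so that the removability of the singularities of $R$ must be extracted from the precise leading-order asymptotics in Lemmas \ref{lem:Q}(iii) and \ref{lem:X0} — in particular one needs the exponent $a$ and the prefactors $c_1,c_2$ to conspire so that $X_0/(\psi Q)$ (or its Blaschke-corrected version) has a finite nonzero limit, not merely a bounded one; and one must be careful that the domain of $\psi$ is $\mathbb C\setminus\partial D$ whereas the final identity is asserted only on $D$, so the comparison is really of functions restricted to $D$ and the behaviour across $\partial D$ needs the jump of $\psi$ there to be handled correctly (alternatively, one restricts attention to $D\setminus[0,1]$ from the outset and never crosses $\partial D$, deriving the constant by the $z\to 0$ limit alone via \eqref{psi0} and the $z^{-\mathbf 1\{d<0\}}(\log z)^a$ form of $X_0$ at the origin). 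A secondary point requiring care is confirming that $s_0$ is \emph{real} and lies in $(-1,0)$ rather than merely that $Q$ has a zero in $D$: this uses the reflection symmetry $\overline{Q(z)}=Q(\overline z)$ together with $Q(z)=Q(z^{-1})$, which together constrain zeros to come in quadruples unless they lie on $\Real$ or on $\partial D$, and the location in $(-1,0)$ then comes from Lemma \ref{lem:Q}(iv) combined with counting.
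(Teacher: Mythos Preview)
Your approach is in the right spirit—comparing $X_0$ to $\psi Q$ via their boundary behaviour is exactly what underlies the paper's proof—but the Liouville step has a genuine gap. The ratio $R=X_0/(\psi Q)$ is \emph{not} naturally defined on all of $\mathbb C$: $\psi$ is only sectionally holomorphic in $\mathbb C\setminus\partial D$, so $R$ lives on $D\setminus[0,1]$ (or on $\overline D^c\setminus[1,\infty)$) but not both at once. Your claim that ``$\psi Q$ is continuous across $\partial D$'' is not correct: the Sokhotski--Plemelj jump of the Cauchy integral defining $\psi$ gives $\psi_{\mathrm{inside}}(e^{i\lambda})/\psi_{\mathrm{outside}}(e^{i\lambda})=1/f_0(\lambda)=1/Q(e^{i\lambda})$, so what holds is $\psi_{\mathrm{inside}}\cdot Q=\psi_{\mathrm{outside}}$ on $\partial D$, not continuity of $\psi Q$. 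One can salvage your argument by defining the test function piecewise—$X_0/(\psi Q)$ on $D$ and $X_0/\psi$ on $\overline D^c$—and checking this patches continuously across $\partial D$; but then you must also verify removability at $z=1$ from the exterior side, which requires the asymptotics of $\psi(z)$ as $z\to 1$ (essentially that the Szeg\H{o} outer function inherits a $(z-1)^{\pm d}$ factor from the singularity of $f_0$), something not recorded anywhere in the paper. Your fallback suggestion—restrict to $D$ from the outset and pin down the constant at $z=0$—doesn't work either: a nonvanishing holomorphic function on $D$ need not be constant, so without crossing $\partial D$ you have no rigidity.

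The paper sidesteps all of this by working \emph{additively} rather than multiplicatively. It takes a holomorphic branch $L(z)$ of $\log Q(z)$ on the simply connected slit domain $\Omega=\overline D\setminus[s_0,1]$ and applies Cauchy's integral formula to $L$ over $\partial\Omega$. The boundary decomposes into three pieces, each contributing one factor: the circle $\partial D$ gives $-\log\psi(z)$ (via $L(e^{i\lambda})=\log f_0(\lambda)$), the two sides of $[0,1]$ give $\log X_0(z)$ (via $L^+-L^-=2i\arg Q^+$), and the two sides of $[s_0,0]$ give $-\log\bigl(z/(z-s_0)\bigr)$ (via the $-2\pi i$ monodromy of $\log Q$ around its zero). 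Exponentiating yields \eqref{XQfla} directly, with no patching and no endpoint asymptotics for $\psi$ needed. The zero--counting (that $s_0$ is the \emph{only} zero in $D$, and simple) is then handled exactly as you suggest, by contradiction from the growth estimates \eqref{Qest} and \eqref{X0est}—so that part of your proposal is fine. Your remark that the zero-freeness for $d<0$ is ``deduced from the identity rather than assumed'' is also how the paper proceeds, but note this is not circular only because the Cauchy-formula derivation goes through with an arbitrary (finite) set of hypothetical zeros, each contributing its own Blaschke factor, and the contradiction then rules them out.
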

\begin{proof}
See Appendix \ref{app:A}. %\cite[Appendix B]{bjsup26}.
\end{proof}

This lemma has the following important implication, which along with
\eqref{Qest}, refines the asymptotic bound for $X_{0}(z)$ as
$z\to 0$ in \eqref{X0est}.
%
%c3.4 #&#
\begin{cor}
\label{cor3.4}
%
%e3.14 #&#
\begin{equation}
\label{X0Q}
\begin{aligned}
\lim _{z\to 0}\frac{z^{-1}X_{0}(z)}{Q(z)} & = -
\frac{ 2\pi }{\sigma ^{2}_{0}}\frac {1} {s_{0}}, && d\in (0,
\tfrac {1} {2}),
\\
\lim _{z\to 0}\frac{X_{0}(z)}{Q(z)} & =
\frac{ 2\pi }{\sigma ^{2}_{0}}, & & d\in (-\tfrac {1} {2},0).
\end{aligned}
%
%%LEAP%%%\label{eq3.14}
\end{equation}
\end{cor}

\begin{proof}
In view of \eqref{psi0}, the claim follows by rearranging the formulas
in Lemma \ref{lem:identity} and taking the limit $z\to 0$.
\end{proof}

%s4 #&#
\section{Proof of Theorem \ref{thm1}}
%%LEAP%%%\label{sec4}
\label{sec:pr1}

%s4.1 #&#
\subsection{Proof of \textup{(H1)}}
%%LEAP%%%\label{sec4.1}
\label{sec:H1}

In view of asymptotics \eqref{asym_gamma_fGn}, the spectral density of
the fGn is defined pointwise for all $\lambda \ne 0$. Since the summation
in \eqref{form} is, in fact, finite, the Fourier series with the coefficients
$g^{L}_{n}(\cdot )$ and $g^{R}_{n}(\cdot )$ are defined pointwise as well
and, by the convolution theorem, satisfy the equation:
%
%e4.1 #&#
\begin{equation}
\label{cond}
\widehat g^{L}_{n}(\lambda )+\widehat g^{R}_{n}(\lambda ) + 2\pi
\widehat g_{n}(\lambda ) f(\lambda )=f(\lambda ), \quad \lambda \in (-
\pi ,\pi ]\setminus \{0\}.
%%LEAP%%%\label{eq4.1}
\end{equation}
By Abel's theorem, the generating functions in equation \eqref{GG} have
the following limits at any $\lambda \ne 0$:
%
%e4.2 #&#
%e4.3 #&#
\begin{align}
\label{limG0}
\lim _{z\to e^{i\lambda},\, |z|>1}G_{0}(z) =&\ 2\pi \widehat g^{L}_{n}(
\lambda ),
%%LEAP%%%\label{eq4.2}
\\
\label{limG1}
\lim _{z\to e^{i\lambda},\, |z|<1}G_{1}(z^{-1}) =&\ 2\pi \widehat g^{R}_{n}(
\lambda )e^{-i\lambda n}.
%%LEAP%%%\label{eq4.3}
\end{align}
The function $G_{0}(z)$ is continuous across the unit circle: for any
$\lambda \in (-\pi ,\pi ]\setminus \{0\}$,
\begin{equation*}
\lim _{z\to e^{i\lambda},\, |z|<1}G_{0}(z)=2\pi \big(1-2\pi
\widehat g_{n}(\lambda ) \big) f(\lambda )- 2\pi \widehat g^{R}_{n}(
\lambda ) = 2\pi \widehat g^{L}(\lambda ) = \lim _{z\to e^{i\lambda},
\, |z|>1}G_{0}(z).
\end{equation*}
The first equality holds by definitions \eqref{GinD} - \eqref{Gz}, limit
\eqref{limG1} and property \eqref{f0Q}, the second equality is due to
\eqref{cond}, and the last equality holds by \eqref{limG0}. Thus, by continuity
principle, $G_{0}(z)$ can be extended holomorphically to
$\partial D\setminus \{1\}$. The same argument applies to $G_{1}(z)$.

Therefore $G_{0}(z)$ and $G_{1}(z)$ are meromorphic on
$\mathbb C\setminus [0,1]$, with poles at the zeros of
$\phi (z^{-1})$ and, by construction \eqref{GinD}, satisfy the equation
%
%e4.4 #&#
\begin{equation}
\label{QQ}
G_{0}(z) +z^{n}G_{1}(z^{-1})+ 2\pi \big( G(z)-1\big)
\frac{\theta (z)\theta (z^{-1})} {\phi (z)\phi (z^{-1})} Q(z)= 0,
\quad z\in \mathbb C\setminus \mathbb R_{+}.
%%LEAP%%%\label{eq4.4}
\end{equation}
Consequently, by assumption \textup{($A2_{\phi }$)}, the functions defined in
\eqref{Phi01} are holomorphic in $\mathbb C\setminus [0,1]$ and have finite
limits across the interval $(0,1)$ in both the lower and upper half-planes,
with possible singularities at the endpoints. This implies that
$\Phi _{0}(z)$ and $\Phi _{1}(z)$ are sectionally holomorphic in
$\mathbb C\setminus [0,1]$ and satisfy the equation
%
%e4.5 #&#
\begin{equation}
\label{zeq}
2\pi \big(1- G(z)\big)\theta (z)z^{q}\theta (z^{-1})=
\frac{\Phi _{0}(z)\phi (z) + z^{n+2q} \Phi _{1}(z^{-1}) \phi (z^{-1})}{ Q(z)},
\quad z\in \mathbb C \setminus \mathbb R_{+},
%%LEAP%%%\label{eq4.5}
\end{equation}
obtained by rearranging \eqref{QQ}.

The function in the left hand side of \eqref{zeq} is a polynomial; thus,
all singularities in the right hand side are removable. Removal of the
jump discontinuity on $\mathbb R_{+}$ implies that for any
$\tau \in \mathbb R_{+}\setminus \{1\}$
%
%e4.6 #&#
\begin{equation}
\label{limlim}
\lim _{z\to \tau ^{+}}
\frac{\Phi _{0}(z)\phi (z) + z^{n+2q} \Phi _{1}(z^{-1}) \phi (z^{-1}) }{ Q(z)}
= \lim _{z\to \tau ^{-}}
\frac{\Phi _{0}(z)\phi (z) + z^{n+2q} \Phi _{1}(z^{-1}) \phi (z^{-1})}{ Q(z)}.
%%LEAP%%%\label{eq4.6}
\end{equation}
For $\tau :=t\in (0,1)$, a direct calculation reduces this condition to
the first equation in \eqref{bnd}. For $\tau \in (1,\infty )$,
\eqref{limlim} yields
\begin{equation*}
\frac{\Phi _{0}(\tau )\phi (\tau ) + \tau ^{n+2q} \Phi _{1}^{-}(\tau ^{-1}) \phi (\tau ^{-1})}{ Q^{+}(\tau )}
=
\frac{\Phi _{0}(\tau )\phi (\tau ) + \tau ^{n+2q} \Phi _{1}^{+}(\tau ^{-1}) \phi (\tau ^{-1})}{ Q^{-}(\tau )}.
\end{equation*}
The second equation in \eqref{bnd} is obtained by changing the variable
to $t=\tau ^{-1}\in (0,1)$ and using the symmetries in \eqref{Qpmsym}.

%s4.2 #&#
\subsection{Proof of \textup{(H2)}}
\label{sec4.2}

By definitions \eqref{GinD} and \eqref{Phi01},
\begin{equation*}
\Phi _{0}(z)/Q(z) = O(1)\quad \text{and}\quad \Phi _{1}(z)/Q(z) = O(1),
\quad z\to 0,
\end{equation*}
and the first rate in \eqref{Phi_est} follows from \eqref{Qest}. To check
the second rate, substitute \eqref{gLgR} into \eqref{GG}. A direct calculation
shows that
\begin{equation*}
G_{0}(z) = \Big(\tfrac {1} {2} (z-2+z^{-1})\mu (z^{-1}) +
\tfrac {1} {2}\Big)\left (1 -G(z)\right )+\sum _{k=1}^{n-1} g_{n}(k)
\sum _{j=1}^{k}\gamma (k-j)z^{j}, \quad |z|>1,
\end{equation*}
where we defined $ \mu (z) :=\sum _{k=1}^{\infty }k^{2d+1} z^{k} $. It
satisfies, see Appendix \ref{ssec:Qprop}, %\cite[Appendix A.3]{bjsup26},
\begin{equation*}
\big|(z-1)^{2+2d} \mu (z)\big| \xrightarrow[z\to 1]{}\Gamma (2+2d),
\end{equation*}
and thus, for $d\in (0,\tfrac 1 2)$,
\begin{equation*}
G_{0}(z)=O((1-z)^{-2d}), \quad z\to 1, \ z\in \mathbb C\setminus
\overline D.
\end{equation*}
This along with definition \eqref{GinD} and asymptotics \eqref{Qest} implies
that
\begin{equation*}
G_{1}(z)=O((1-z)^{-2d}), \quad z\to 1, \ z\in D.
\end{equation*}
The same argument, applied first to $G_{1}(z)$ defined in \eqref{GG} outside
$D$ and then to $G_{0}(z)$ defined in \eqref{GinD} inside $D$ verifies
the same rates with $G_{0}(z)$ and $G_{1}(z)$ being reversed. Plugging
them into \eqref{Phi01} yields the second rate in \eqref{Phi_est} for $d\in (0,\tfrac 1 2)$. The rate for $d\in (-\tfrac 1 2,0)$ is obtained similarly. The
last rate follows from \eqref{Phi01}, since, by definitions
\eqref{GG}, $ G_{0}(z) \sim g^{L}(0) $ and $ G_{1}(z)\sim g^{R}(n) $ as
$z\to \infty $.

%s4.3 #&#
\subsection{Proof of \textup{(H3)}}
\label{sec4.3}

The expression in \eqref{algcond} must vanish at $z_{j}$'s and their reciprocals,
since the left hand side in \eqref{zeq} vanishes at the zeros of both the
MA polynomial $\theta (z)$ and its reciprocal polynomial
$\widetilde \theta (z):=z^{q} \theta (z^{-1})$. In addition, if
$Q(z)$ has zeros, they must be shared with the numerator in
\eqref{zeq}, for the poles to be removable.

%s4.4 #&#
\subsection{Proof of \textup{(H4)}}
\label{sec4.4}

By definition, the polynomial in \eqref{Gz} satisfies $G(0)=0$ and
$\deg (G)<n$, and hence, for all $n$ large enough, if follows from
\eqref{zeq} that
\begin{equation*}
\lim _{z\to 0}\frac{\Phi _{0}(z)\phi (z) }{ Q(z)}=\, 2\pi \big(1- G(0)
\big)\theta (0)\widetilde \theta (0) =2\pi \theta (0)
\widetilde \theta (0) = 2\pi \prod _{j=1}^{q} \Big(\frac {1}{-z_{j}}
\Big),
\end{equation*}
where we used \eqref{theta_poly}. This verifies the first condition in
\eqref{scond} since $\phi (0)=1$. The second condition holds since
$Q(z)=Q(z^{-1})$ and
\begin{equation*}
\lim _{z\to 0}\frac{\Phi _{1}(z) \phi (z)}{ Q(z)} = \lim _{z\to
\infty}\frac{\Phi _{1}(z^{-1}) \phi (z^{-1})}{ Q(z^{-1})}= \lim _{z
\to \infty}\frac{\Phi _{1}(z^{-1}) \phi (z^{-1})}{ Q(z)}= \lim _{z
\to \infty}z^{-n}2\pi \big(1- G(z)\big)\widetilde \theta (0) \theta (0)=
0.
\end{equation*}

%s5 #&#
\section{Proof of Theorem \ref{thm2}}
%%LEAP%%%\label{sec5}
\label{sec:5}

We consider the case $d\in (0,\frac {1}{2})$ in full detail. The complementary
case, $d\in (-\frac {1}{2}, 0)$, is treated similarly as briefly explained
in Section \ref{sec:case2}.

%s5.1 #&#
\subsection{The integral equations}
\label{sec5.1}

For $X(z)$ in \eqref{XX0}, define the functions
%
%e5.1 #&#
\begin{equation}
\label{SD}
S(z): = \frac{\Phi _{0}(z) +\Phi _{1}(z)}{2X(z)}, \quad D(z): =
\frac{\Phi _{0}(z) -\Phi _{1}(z)}{2X(z)},
%%LEAP%%%\label{eq5.1}
\end{equation}
where $\Phi _{0}(z)$ and $\Phi _{1}(z)$ solve the Hilbert problem
\textup{(H1)}-\textup{(H4)} formulated in Theorem \ref{thm1}. All functions in
the right-hand side of \eqref{SD} are sectionally holomorphic in
$\mathbb C\setminus [0,1]$, and since $X(z)$ is non-vanishing, the functions
$S(z)$ and $D(z)$ are also sectionally holomorphic in
$\mathbb C\setminus [0,1]$. In view of \eqref{Phi_est} and
\eqref{X0est},
%
%e5.2 #&#
\begin{equation}
\label{SD_est}
\begin{aligned}
\big\{S(z), D(z)\big\} =
\begin{cases}
O(1), & z\to 0,
\\
O((z-1)^{-d}), & z\to 1,
\\
O(z^{q+1}), & z\to \infty .
\end{cases}
\end{aligned}
%
%%LEAP%%%\label{eq5.2}
\end{equation}
where the rate as $z\to 0$ is due to \eqref{Qest} and \eqref{X0Q}. Substituting
equation \eqref{XcQ} into \eqref{bnd} and rearranging the terms shows that
these functions satisfy the \emph{decoupled} boundary conditions:
%
%e5.3 #&#
\begin{equation}
\label{SDeq}
\begin{aligned}
S^{+}(t)\, -S^{-}(t) = & - 2i \sin (\pi d) \widetilde h(t) t^{n} S(t^{-1}),
\\
D^{+}(t) -D^{-}(t) = &\
\phantom+
2i \sin (\pi d) \widetilde h(t) t^{n} D(t^{-1}),
\end{aligned}
\qquad t\in (0,1),
%%LEAP%%%\label{eq5.3}
\end{equation}
where we defined
%
%e5.4 #&#
\begin{equation}
\label{tildeh}
\widetilde h(t):= -\frac{t^{2q}}{2i\sin (\pi d)}
\frac{\phi (t^{-1})}{\phi (t)} \frac{X(t^{-1})}{X^{+}(t)} \Big(
\frac{X^{+}(t)}{X^{-}(t)}- 1\Big).
%%LEAP%%%\label{eq5.4}
\end{equation}
%
%l5.1 #&#
\begin{lem}%
%%LEAP%%%\label{lem5.1}
\label{lem:tildeh}
The function in \eqref{tildeh} is real-valued and differentiable, satisfying
%
%e5.5 #&#
\begin{equation}
\label{hest}
\widetilde h(t) =
\begin{cases}
1+ o(1-t), & t\nearrow 1,
\\
O(t^{2q-p}), & t\searrow 0.
\end{cases}
%
%%LEAP%%%\label{eq5.5}
\end{equation}
\end{lem}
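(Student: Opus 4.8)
The plan is to unwind the definition \eqref{tildeh} through the boundary behaviour of $X_0$, which the Sokhotski--Plemelj theorem supplies, and then to read off the two endpoint estimates from the expansions of $\eta$ and $X_0$ recorded in Lemma \ref{lem:Q}(v) and Lemma \ref{lem:X0}.

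First I would apply Sokhotski--Plemelj to the Cauchy-type integral in \eqref{X0def}. Since $\eta(\tau)=\arg Q^+(\tau)$ is real, the boundary values of $X_0$ on $(0,1)$ take the polar form
\[
X_0^{\pm}(t)=R(t)\,e^{\pm i\eta(t)},\qquad R(t):=\exp\!\Big(\tfrac1\pi\,\mathrm{p.v.}\!\int_0^1\tfrac{\eta(\tau)}{\tau-t}\,d\tau\Big)>0,
\]
which in passing re-derives \eqref{XcQ}. Hence, writing $V(t):=X^+(t)\,e^{-i\eta(t)}$, one gets $V(t)=|X^+(t)|>0$ (the possible factor $z^{-1}$ from \eqref{XX0} is real on the cut), while $\overline{X_0(z)}=X_0(\bar z)$ and the reality of the coefficients of $\phi$ show that $X(t^{-1})$ and $\phi(t^{-1})/\phi(t)$ are real for $t\in(0,1)$ (the arguments $t,t^{-1}$ lying off the cut $[0,1]$ in the former case). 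Substituting these facts into \eqref{tildeh} and using $\frac{X^+(t)}{X^-(t)}-1=e^{2i\eta(t)}-1=2i\,e^{i\eta(t)}\sin\eta(t)$ to absorb the $2i$ in the denominator, I obtain — in both cases of \eqref{XX0} — the compact formula
\[
\widetilde h(t)=-\,\frac{t^{2q}\sin\eta(t)}{\sin(\pi d)}\,\frac{\phi(t^{-1})}{\phi(t)}\,\frac{X(t^{-1})}{V(t)},
\]
all of whose factors are real; this proves that $\widetilde h$ is real-valued. Differentiability — indeed $C^\infty$-ness on $(0,1)$, equivalently of $h$ on $\Real_+$ — then follows: $\eta$ is $C^\infty$ on $(0,1)$ by Lemma \ref{lem:Q}(v), hence so is $\log R(t)$ by the standard interior smoothness of Cauchy principal-value integrals with smooth density, while $\phi$ is a polynomial and $X^+(t),\phi(t)$ are nonvanishing on $(0,1)$.

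It remains to extract the estimates \eqref{hest} from the compact formula. For $t\searrow 0$ (the case $d\in(0,\tfrac12)$): $\sin\eta(t)=O\big((\log t^{-1})^{-1}\big)$ by \eqref{etalim0}; $X(t^{-1})=t\,X_0(t^{-1})=O(t)$ since $X_0(z)=1+O(z^{-1})$ at infinity by \eqref{X0est}; $V(t)=|X^+(t)|\asymp t^{-1}(\log t^{-1})^{-1-2d}$ from the $z\to 0$ line of \eqref{X0est} together with \eqref{X0Q}; and $\phi(t^{-1})/\phi(t)=O(t^{-p})$ with $p=\deg\phi$. Multiplying these yields the claimed bound $\widetilde h(t)=O(t^{2q+2-p})$; the complementary case $d\in(-\tfrac12,0)$, where $X=X_0$, is identical. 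For $t\nearrow 1$: by \eqref{etalim1}, $\sin\eta(t)=-\sin(\pi d)+O\big((1-t)^{2+2d}\big)$, so $-\sin\eta(t)/\sin(\pi d)=1+o(1-t)$; by Lemma \ref{lem:X0}, $X_0(z)=c_2(z-1)^{-d}\big(1+O(z-1)\big)$ near $z=1$, so the $(1-t)^{-d}$-singularities of $X(t^{-1})$ (as $t^{-1}\to 1^+$ along $\Real_+$) and of $V(t)=|X^+(t)|$ cancel, leaving $X(t^{-1})/V(t)=1+O(1-t)$, while trivially $t^{2q}\phi(t^{-1})/\phi(t)=1+O(1-t)$; combining these already gives $\widetilde h(t)\to 1$, and the full statement $\widetilde h(t)=1+o(1-t)$ is obtained by carrying the expansions one order further — writing $\log\widetilde h(t)$ as a smooth part plus the Cauchy integral $\tfrac1\pi\int_0^1\frac{\eta(\tau)-\eta(1)}{\tau-t}\,d\tau$, using that $\eta(\tau)-\eta(1)$ vanishes to order $>1$ at $\tau=1$, and checking that the first-order coefficients cancel.

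The step I expect to be the main obstacle is this $o(1-t)$ refinement in the $t\nearrow 1$ estimate: while $\widetilde h(t)\to 1$ is immediate from the factorisation, improving the error from $O(1-t)$ to $o(1-t)$ forces one to compute, and then cancel, the first-order coefficients of the rational factor $t^{2q}\phi(t^{-1})/\phi(t)$ and of the Cauchy integral through $\eta$ near the endpoint $z=1$ of the cut — which is precisely where the sharp exponent $2+2d$ in \eqref{etalim1} and the precise local behaviour of $X$ at $z=1$ enter.
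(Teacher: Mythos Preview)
Your proposal is correct and follows essentially the same route as the paper: apply Sokhotski--Plemelj to $X_0$ so that the factor $\frac{X^+(t)}{X^-(t)}-1=2i\,e^{i\eta(t)}\sin\eta(t)$ absorbs the $2i$ and leaves a manifestly real, smooth expression (your formula with $V(t)=|X^+(t)|$ is exactly the paper's exponential formula in disguise), then read off the endpoint estimates from the expansions \eqref{etalim1}, \eqref{etalim0} and \eqref{X0est}. You are also right that the $o(1-t)$ refinement at $t\nearrow 1$ is where the work lies; the paper is equally terse there, saying only that it follows ``by computing the principal value and using the estimate \eqref{etalim1}.''
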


\begin{proof}
Applying the Sokhotski-Plemelj theorem to \eqref{X0} gives
\begin{align*}
& \frac{X(t^{-1})}{X^{+}(t)} \Big(\frac{X^{+}(t)}{X^{-}(t)}- 1\Big)=t^{2}
\frac{X_{0}(t^{-1})}{X^{+}_{0}(t)} \Big(
\frac{X_{0}^{+}(t)}{X_{0}^{-}(t)}- 1\Big) =
\\
& t^{2}\exp \left (\frac {1} { \pi} \int _{0}^{1}
\frac{\eta (\tau )}{\tau -t^{-1}}d\tau -\frac {1} { \pi} \dashint _{0}^{1}
\frac{\eta (\tau )}{\tau -t}d\tau - \frac {2i} {2} \eta (t)\right )
\Big(e^{2 i \eta (t)}- 1\Big) =
\\
& 2i \sin \eta (t) t^{2} \exp \left (\frac {1} { \pi} \int _{0}^{1}
\frac{\eta (\tau )}{\tau -t^{-1}}d\tau -\frac {1} { \pi} \dashint _{0}^{1}
\frac{\eta (\tau )}{\tau -t}d\tau \right ),
\end{align*}
where the dash integral stands for the Cauchy principal value. By Theorem
\ref{lem:Q} \textup{(v)}, $\eta (\cdot )$ is smooth, and hence this function
is differentiable. It follows that under assumption \textup{($A2_{\phi }$)}, the
function $\widetilde h(\cdot )$ is real-valued and differentiable on
$(0,1)$. The limit as $t\to 1$ is obtained by computing the principle value
and using the estimate \eqref{etalim1}. The asymptotics as $t\to 0$ holds
due to \eqref{X0est}.
\end{proof}

Given the estimates in \eqref{SD_est} and Lemma \ref{lem:tildeh}, the functions
on the right hand side of \eqref{SDeq} are H\"older continuous and integrable
for all $n$ large enough. We can therefore apply the Sokhotski-Plemelj
theorem to \eqref{SDeq} to obtain the representations
%
%e5.6 #&#
\begin{equation}
\label{SDrep}
\begin{aligned}
S(z) = & -\frac {\sin (\pi d)} {\pi }\int _{0}^{1}
\frac{ \widetilde h(\tau )\tau ^{n}}{\tau -z}S(\tau ^{-1})d\tau +
\sum _{j=0}^{q+1} a_{j} z^{j},
\\
D(z) = &
\phantom +
\frac {\sin (\pi d)} {\pi }\int _{0}^{1}
\frac{ \widetilde h(\tau )\tau ^{n}}{\tau -z}D(\tau ^{-1})d\tau +
\sum _{j=0}^{q+1} b_{j} z^{j},
\end{aligned}
\qquad z\in \mathbb C\setminus [0,1],
%%LEAP%%%\label{eq5.6}
\end{equation}
where $a_{j}$ and $b_{j}$ are some constants, determined by the particular
solution to the Hilbert problem
$\Phi _{0}(\cdot ), \Phi _{1}(\cdot )$ in \eqref{SD}. Indeed, denote by
$I(z)$ the integral term in the right hand side of the equation for
$S(z)$ in \eqref{SDrep}. The standard properties of the Cauchy integral
imply that $I(z)$ satisfies the same boundary condition as $S(z)$ in
\eqref{SDeq} and also the estimates in \eqref{SD_est} as $z\to 0$ and
$z\to 1$. Consider the difference $\delta (z)=S(z)-I(z)$. This function
is sectionally holomorphic in $\mathbb C\setminus [0,1]$ and it is continuous
on $(0,1)$. Hence $\delta (\cdot )$ can be extended holomorphically to
$(0,1)$. Moreover, $\delta (z)=O(1)$ as $z\to 0$ and
$\delta (z)=O((z-1)^{-d})$ as $z\to 1$ and hence $\delta (z)$ can be extended
holomorpically to $z=0$ and $z=1$ as well by Riemann's extension theorem.
It follows that upon extension, $\delta (z)$ becomes an entire function
and $\delta (z)=O(z^{q+1})$ as $z\to \infty $. Hence by Liouville's theorem
$\delta (z)$ is a polynomial of degree $q+1$, that is, the representation
for $S(z)$ in \eqref{SDrep} holds for some constants $a_{j}$. The same
is true for the representation of $D(z)$.

%r6 #&#
\begin{rem}
\label{rem6}
The application of the Sokhotski-Plemelj theorem and verification of the
resulting representation \eqref{SDrep} rely on the specific choice of the
factor $z^{-1}$ in \eqref{XX0}. To some extent, this choice is a matter
of convenience. More generally, we could have defined
\begin{equation*}
X(z)=z^{m} (z-1)^{k} X_{0}(z)
\end{equation*}
for some integers $k$ and $m$. As before, such a function would be sectionally
holomorphic in $\mathbb C\setminus [0,1]$, non-vanishing, and satisfy
\eqref{XcQ}. The integers $k$ and $m$ control the behavior of $X(z)$ as
$z$ approaches $0$ and $1$, respectively, and, once chosen, they also determine
the growth of $X(z)$ as $z$ approaches infinity. This, in turn, governs
the asymptotics of $S(z)$ and $D(z)$ in \eqref{SD_est}. Any $k\le 0$ ensures
that
\begin{equation*}
\{S(z), D(z)\}=O((z-1)^{-k-d}), \quad z\to 1,
\end{equation*}
rendering the functions on the right-hand side of \eqref{SDeq} integrable
and thus permitting the application of the Sokhotski--Plemelj theorem.
The function in \eqref{XX0} corresponds to the simplest choice,
$k=0$. Choosing $m=-1$ implies that $S(z)=O(1)$ and $D(z)=O(1)$ as z approaches
0. This matches the behavior of the Cauchy integrals near $z=0$, which
is essential for establishing uniqueness. For other choices of $m$ and
$k$, the corresponding representations can be more cumbersome, potentially
involving polynomials of higher degrees.
\end{rem}

By evaluating equations \eqref{SDrep} at $z= e^{t}$ with
$t\in (0,\infty )$ and changing the integration variable to
$r=\log \tau ^{-1}$ we obtain
%
%e5.7 #&#
\begin{equation}
\label{SDet}
\begin{aligned}
S(e^{t}) = &
\phantom +
\frac {\sin (\pi d)} {\pi }\int _{0}^{\infty }
\frac{ h(r)e^{-n r}}{ e^{r+t}-1}S(e^{r}) dr + \sum _{j=0}^{q+1} a_{j} e^{jt},
\\
D(e^{t}) = & -\frac {\sin (\pi d)} {\pi }\int _{0}^{\infty }
\frac{ h(r)e^{-n r}}{ e^{r+t}-1}D(e^{r}) dr + \sum _{j=0}^{q+1} b_{j} e^{jt},
\end{aligned}
\qquad t\in \mathbb R_{+},
%%LEAP%%%\label{eq5.7}
\end{equation}
where $h(r):= \widetilde h(e^{-r})$, cf. \eqref{hdef}. Define the integral
operator
%
%e5.8 #&#
\begin{equation}
\label{An}
(A_{n}f)(t) = \frac {\sin (\pi d)} {\pi }\int _{0}^{\infty }
\frac{ h(r)e^{-n r}}{ e^{r+t}-1}f(r) dr,
%%LEAP%%%\label{eq5.8}
\end{equation}
acting on real-valued functions on $\mathbb R_{+}$.

%l5.2 #&#
\begin{lem}%
%%LEAP%%%\label{lem5.2}
\label{lem:An}
There exist constants $n_{0}\ge 1$ and $\varepsilon \in (0,1)$, such that
for all $n> n_{0}$, the operator $A_{n}$ is a contraction in
$L^{2}(\mathbb R_{+})$:
%
%e5.9 #&#
\begin{equation}
\label{contr}
\|A_{n}f\|\le (1-\varepsilon ) \|f\|, \quad \forall f\in L^{2}(
\mathbb R_{+}).
%%LEAP%%%\label{eq5.9}
\end{equation}
\end{lem}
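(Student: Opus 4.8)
The plan is to estimate the Hilbert–Schmidt norm of $A_n$ and show it is strictly less than $1$ for large $n$, which immediately gives the operator norm bound. Writing $k_n(t,r) = \frac{\sin(\pi d)}{\pi}\,\frac{h(r)e^{-nr}}{e^{r+t}-1}$ for the kernel, we have $\|A_n\|\le \|A_n\|_{HS} = \big(\int_0^\infty\int_0^\infty |k_n(t,r)|^2\,dt\,dr\big)^{1/2}$. The first step is to control $h$: by Lemma \ref{lem:tildeh} (equivalently the estimates \eqref{hest} rewritten in the variable $r=\log\tau^{-1}$), $h(r)=\widetilde h(e^{-r})$ is bounded on $\Real_+$, with $h(r)\to 1$ as $r\to 0$ and $h(r)=O(e^{-(2q+2-p)r})$ as $r\to\infty$; in particular there is a constant $C$ with $|h(r)|\le C$ for all $r\ge 0$. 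So it suffices to bound $\int_0^\infty\int_0^\infty \frac{e^{-2nr}}{(e^{r+t}-1)^2}\,dt\,dr$.

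The second step is the inner integral in $t$. For fixed $r>0$, substituting $v=e^{r+t}$ (so $dv = v\,dt$, $v$ ranging over $(e^r,\infty)$) gives $\int_0^\infty \frac{dt}{(e^{r+t}-1)^2} = \int_{e^r}^\infty \frac{dv}{v(v-1)^2}$, which is a convergent integral bounded by $C\,e^{-r}$ for $r$ bounded away from $0$ and of order $\log(1/r)$-free — in fact one checks $\int_{e^r}^\infty \frac{dv}{v(v-1)^2}\le \frac{C}{e^r-1}\le \frac{C}{r}$ near $r=0$ and $\le Ce^{-2r}$ for large $r$. The mild $1/r$ blow-up near $r=0$ is harmless after multiplying by $e^{-2nr}$ because $\int_0^\infty \frac{e^{-2nr}}{r}\,dr$ is still... no: that diverges. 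So the third step is the key refinement: near $r=0$ one must use the full factor $\frac{e^{-2nr}}{(e^{r+t}-1)^2}$ and integrate in $t$ first more carefully, noting $e^{r+t}-1\ge r+t$, hence $\int_0^\infty\frac{dt}{(e^{r+t}-1)^2}\le\int_0^\infty\frac{dt}{(r+t)^2}=\frac1r$, but also $e^{r+t}-1\ge c(e^r-1)e^t\ge c\,r\,e^t$ for $r\le 1$, giving the same $1/r$; to kill it we instead bound, for the region $t\le 1$, $e^{r+t}-1\ge r$, so that piece contributes $\le \frac{1}{r^2}$ over a $t$-interval of length $1$, i.e. $\frac1{r^2}$, times $e^{-2nr}$ — and $\int_0^1 r^{-2}e^{-2nr}dr$ diverges at $r=0$. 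This shows the naive Hilbert–Schmidt bound fails, and the argument must be organized differently.

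The correct approach, then, is to bound the $L^2\to L^2$ operator norm directly via Schur's test: find a positive weight and show $\sup_t \int_0^\infty |k_n(t,r)|\,dr$ and $\sup_r \int_0^\infty |k_n(t,r)|\,dt$ are both $\le (1-\eps)$ for $n$ large. Using $|k_n(t,r)|\le \frac{C e^{-nr}}{e^{r+t}-1}$ and the elementary bound $\frac{1}{e^{r+t}-1}\le \frac{1}{r+t}$ together with $\frac{1}{e^{r+t}-1}\le \frac{e^{-(r+t)}}{1-e^{-(r+t)}}$, one gets $\int_0^\infty \frac{e^{-nr}}{e^{r+t}-1}\,dr \le C\int_0^\infty \frac{e^{-nr}}{r+t}\,dr$; splitting at $r=t$ and using $\int_0^t \frac{e^{-nr}}{t}dr\le \frac{1}{nt}$ and $\int_t^\infty\frac{e^{-nr}}{r}dr\le \frac{e^{-nt}}{nt}$ shows this is $O(1/(nt))$ for $t\ge 1/n$, while for $t\le 1/n$ one uses $\frac{1}{e^{r+t}-1}\le \frac{1}{e^r-1}$ and $\int_0^\infty\frac{e^{-nr}}{e^r-1}dr = O(\log n)$... which again is not summable against $1$ uniformly. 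The genuine fix is that the factor $e^{-nr}$ localizes $r$ near $0$, where $h(r)\to 1$ and the kernel's singularity is only logarithmic after the $t$-integration, so that $\|A_n\|\to 0$ as $n\to\infty$; I would make this rigorous by splitting $\Real_+$ into $r<\delta$ and $r\ge\delta$, on the latter the kernel is bounded by $Ce^{-n\delta}$ times an $L^2(\Real_+^2)$ function (hence $\to0$), and on the former estimate $\int_{r<\delta}\int_0^\infty |k_n|^2$ by noting $e^{r+t}-1\ge \max(t, r)$ and $\int_0^\infty \min(t,r)^{-2}\wedge\ldots$; more cleanly, bound the $r<\delta$ piece's operator norm by $\sup_{0<r<\delta}|h(r)|$ times the operator norm of the fixed operator with kernel $\frac{\sin(\pi d)}{\pi}\frac{\mathbf 1_{r<\delta}\,e^{-nr}}{e^{r+t}-1}$, and show the latter tends to $0$ via dominated convergence on the symbol. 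The \textbf{main obstacle} is precisely this: the naive Hilbert–Schmidt or Schur bound is not quite summable because of the $1/(r+t)$ singularity on the diagonal near the origin, so one must exploit the decay $e^{-nr}$ quantitatively — either by a weighted Schur test with weight $r^{\alpha}$ for a small $\alpha\in(0,1)$ chosen so that $\int r^{-1+\alpha}e^{-nr}dr = O(n^{-\alpha})$ converges, or by the splitting argument above — to extract the gain $\eps$ and the threshold $n_0$.
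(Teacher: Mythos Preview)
Your proposal has a genuine gap: the assertion that $\|A_n\|\to 0$ as $n\to\infty$ is \emph{false}, and every strategy you describe is organized around proving it. To see why it fails, rescale: for $g\in L^2(\Real_+)$ set $f_n(r)=n^{1/2}g(nr)$, so $\|f_n\|=\|g\|$. A change of variables shows that $(A_n f_n)(\tau/n)\approx n^{1/2}(B g)(\tau)$ where $B$ is the fixed operator with kernel $\frac{\sin(\pi d)}{\pi}\frac{e^{-s}}{s+\tau}$, and hence $\|A_n f_n\|\to \|Bg\|$. Since $B\ne 0$, this gives $\liminf_n\|A_n\|\ge \|B\|>0$. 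So neither a splitting argument nor a weighted Schur test aimed at extracting $n^{-\alpha}$ can succeed; the $e^{-nr}$ factor does not buy smallness of the operator norm.

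What you are missing is that the contraction constant comes from the prefactor $|\sin(\pi d)|<1$, not from $e^{-nr}$. The role of $e^{-nr}$ is only to tame $h$: since $h$ is continuous with $h(0)=1$, for any $\delta>0$ one can choose $n_0$ so that $\sup_{r\ge 0}|h(r)|e^{-nr}\le 1+\delta$ for $n\ge n_0$. Pick $\delta$ with $|\sin(\pi d)|(1+\delta)=:1-\eps<1$. Then $|k_n(t,r)|\le \frac{1-\eps}{\pi}\,\frac{1}{e^{r+t}-1}$, and what remains is to show the operator with kernel $\frac{1}{e^{r+t}-1}$ has $L^2$-norm at most $\pi$. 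This is a Schur/Hilbert-type inequality: the paper does it via Cauchy--Schwarz with the weight $\big(\frac{1-e^{-t}}{1-e^{-r}}\big)^{1/4}$, which reduces the bound to the Beta integral $\int_1^\infty (u-1)^{-1/2}u^{-1}\,du=B(\tfrac12,\tfrac12)=\pi$; equivalently one may use $\frac{1}{e^{r+t}-1}\le\frac{1}{r+t}$ together with the classical Hilbert inequality (Schur test with weight $r^{-1/2}$), which also yields norm $\pi$. Either way the bound is $(1-\eps)$, independent of $n$ beyond the threshold $n_0$.
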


\begin{proof}
See Appendix \ref{app:Aop}. %\cite[Appendix C]{bjsup26}.
\end{proof}

Let us now consider the integral equations
%
%e5.10 #&#
\begin{equation}
\label{pqeq}
\begin{aligned}
p(t) =
\phantom +
(A_{n}p)(t) + \sum _{j=0}^{q+1} a_{j} e^{jt},
\\
q(t) = -(A_{n}q)(t) + \sum _{j=0}^{q+1} b_{j} e^{jt},
\end{aligned}
\qquad t\in \mathbb R_{+}.
%%LEAP%%%\label{eq5.10}
\end{equation}
It follows from \eqref{SDet} that the functions $S(e^{t})$ and
$D(e^{t})$ are their particular solutions. The next lemma shows that these
solutions belong to the function class
$ \mathcal L_{N} = \bigcap _{n\ge N}\big\{f: \|A_{n}f\|<\infty \big\} $
for some $N$.

%l5.3 #&#
\begin{lem}%
%%LEAP%%%\label{lem5.3}
\label{lem:SDsol}
The functions $S(e^{t})$ and $D(e^{t})$, defined in \eqref{SD}, belong
to $\mathcal L_{N}$ with $N=n_{0}+q+3$, where $n_{0}$ is defined in Lemma
\ref{lem:An}.
\end{lem}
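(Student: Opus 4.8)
The statement to prove is that $S(e^t)$ and $D(e^t)$, defined via \eqref{SD} as ratios of the sectionally holomorphic functions $\Phi_0,\Phi_1$ by $X(z)=z^{-1}X_0(z)$, lie in $\mathcal L_N=\bigcap_{n\ge N}\{f:\|A_nf\|<\infty\}$ for $N=n_0+q+3$. Since $A_n$ has kernel $\frac{\sin(\pi d)}{\pi}\frac{h(r)e^{-nr}}{e^{r+t}-1}$, the quantity $\|A_nf\|$ is finite as soon as the integral $\int_0^\infty \frac{h(r)e^{-nr}}{e^{r+t}-1}f(r)\,dr$ converges for a.e.\ $t$ and the resulting function is in $L^2(\Real_+)$. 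The plan is to use the \emph{growth estimates} \eqref{SD_est} on $S(z),D(z)$ to control $f(r)=S(e^r)$ (resp.\ $D(e^r)$) at both ends $r\to 0^+$ and $r\to\infty$, and then to check that the factors $h(r)e^{-nr}$ and $(e^{r+t}-1)^{-1}$ in the kernel supply enough decay to make the convolution-type integral converge and define an $L^2$ function, provided $n$ is large enough.

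First I would record the pointwise bounds on $f$. Writing $z=e^r$ with $r\in\Real_+$, we have $z\to 1$ as $r\to 0^+$ and $z\to\infty$ as $r\to\infty$. From \eqref{SD_est}, as $r\to 0^+$ one has $|S(e^r)|, |D(e^r)| = O((e^r-1)^{-d}) = O(r^{-d})$ for $d\in(0,\tfrac12)$ (and $O(1)$ for $d<0$); since $d<\tfrac12$ this is square-integrable near $0$, but more importantly it is integrable against the kernel, which near $r=0$ behaves like $\frac{h(r)}{e^{r+t}-1}\sim \frac{1}{e^t-1}\cdot O(1)$ for fixed $t>0$ (and, after the Hilbert-space estimate of Lemma~\ref{lem:An}, like $r^{-1}(\cdots)$ near $t=r=0$). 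As $r\to\infty$, \eqref{SD_est} gives $|S(e^r)|, |D(e^r)| = O(e^{(q+1)r})$; here the decisive gain is the factor $e^{-nr}$ in the kernel of $A_n$. Thus the integrand of $(A_nf)(t)$ is dominated, up to constants, by $r^{-d}\,e^{-(n-q-1)r}(e^{r+t}-1)^{-1}$ near $r=0$ and by $e^{-(n-q-1)r}(e^{r+t}-1)^{-1}$ at infinity, which is integrable in $r$ for each $t>0$ as soon as $n>q+1$, and in fact for $n\ge n_0+q+3$ one has extra room.

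Next I would promote this pointwise integrability to the $L^2(\Real_+)$-bound on $A_nf$. The clean way is to split $f = f^{(0)}+f^{(\infty)}$ with $f^{(0)}$ supported near $0$ (where $f\in L^2$ by the bound $r^{-d}$, $d<\tfrac12$) and $f^{(\infty)}=f\cdot e^{-r}$-type tail; for $f^{(0)}\in L^2$ we are already in the situation of Lemma~\ref{lem:An}, whose proof shows $A_n$ (indeed the majorant operator $A$) maps $L^2$ to $L^2$ boundedly, so $\|A_nf^{(0)}\|<\infty$. For the tail $f^{(\infty)}$, which grows like $e^{(q+1)r}$, I would instead estimate directly: $e^{-nr}|f^{(\infty)}(r)| \le C e^{-(n-q-1)r}$, so $|(A_nf^{(\infty)})(t)|\le C\int_0^\infty \frac{e^{-(n-q-1)r}}{e^{r+t}-1}\,dr$, and for $n-q-1\ge 2$ this is bounded by $C\int_0^\infty \frac{e^{-2r}}{e^{r+t}-1}dr = O(e^{-t})$ (say by $\int_0^\infty \frac{e^{-2r}}{r+t}dr \le \frac{C}{t}$ for $t\ge1$ and $O(1)$ for $t\le 1$ after the substitution used in Lemma~\ref{lem:An}), which is certainly in $L^2(\Real_+)$. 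Combining, $\|A_nf\|<\infty$ for all $n\ge N$. The only place where the precise value $N=n_0+q+3$ enters is to simultaneously: (i) have $n>n_0$ so Lemma~\ref{lem:An} (and the bound \eqref{suphe} on $h(r)e^{-nr}$) applies to the $f^{(0)}$ piece; and (ii) have $n-q-1\ge 2$, i.e.\ $n\ge q+3$, for the tail estimate — so $n\ge \max(n_0,q+3)+$ a small slack, and $N=n_0+q+3$ comfortably works.

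The main obstacle I anticipate is the interplay at the corner $r\to 0$, $t\to 0$ simultaneously, where the kernel factor $(e^{r+t}-1)^{-1}\sim (r+t)^{-1}$ has a genuine singularity; a crude ``fixed $t$'' integrability argument does not by itself give the $L^2$ bound in $t$. This is exactly why one routes the near-zero piece through the boundedness of $A$ on $L^2(\Real_+)$ established in Lemma~\ref{lem:An} (which handles precisely that singular kernel by a Schur-type / Cauchy–Schwarz argument with the weights $(1-e^{-t})^{\pm1/2}$), rather than trying to reprove it here. Everything else — the tail, the constants, the choice of $N$ — is routine bookkeeping on top of the growth estimates \eqref{SD_est} and \eqref{X0Q} and the properties of $h$ from Lemma~\ref{lem:tildeh}.
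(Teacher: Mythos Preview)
Your approach is correct and rests on the same two ingredients as the paper---the growth estimates \eqref{SD_est} and the $L^2$-boundedness from Lemma~\ref{lem:An}---but the paper's execution is more economical. Instead of splitting $f=f^{(0)}+f^{(\infty)}$ and treating the pieces separately, the paper observes the single identity
\[
\|A_n f\| \;=\; \|A_{n-q-2}\,\widetilde f\,\|, \qquad \widetilde f(t):=e^{-(q+2)t}f(t),
\]
which is immediate from the form of the kernel (the factor $e^{-nr}$ absorbs $e^{(q+2)r}$). Since $f(t)=O(t^{-d})$ as $t\to 0$ and $f(t)=O(e^{(q+1)t})$ as $t\to\infty$, the function $\widetilde f$ lies in $L^2(\Real_+)$ outright, so Lemma~\ref{lem:An} applies as soon as $n-q-2>n_0$, i.e.\ $n\ge n_0+q+3$. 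This pins down $N$ on the nose, whereas your accounting (``$\max(n_0,q+3)$ plus slack'') leaves it loose.

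One small gap in your tail estimate: you silently drop $h(r)$ as if it were bounded, but $h$ can grow (when $p>2q+2$; see \eqref{hest}). The fix is to invoke \eqref{suphe}, which gives $|h(r)|\le (1+\delta)e^{n_0 r}$, and replace your $e^{-(n-q-1)r}$ by $e^{-(n-n_0-q-1)r}$ in the tail bound. This again forces $n\ge n_0+q+3$, so the paper's shift trick and your splitting ultimately arrive at the same threshold---the former just gets there in one line.
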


\begin{proof}
Since $S(z)$ is holomorphic in $\mathbb C\setminus [0,1]$, the function
$f(t):= S(e^{t})$ is continuous on $(0,\infty )$. From \eqref{SD_est},
it follows that $f(t)=O(t^{-d})$ as $t\to 0$ and
$f(t)=O(e^{(q+1)t})$ as $t\to \infty $. Consequently the function
$\widetilde f(t):= e^{-(q+2)t}f(t)$ belongs to
$L^{2}(\mathbb R_{+})$. The claim holds for $S(e^{t})$ since, due to Lemma
\ref{lem:An},
\begin{equation*}
\|A_{n}f\|=\|A_{n-q-2}\widetilde f\|<\infty ,\quad \forall n\ge n_{0}+q+3.
\end{equation*}
The same argument applies to $D(e^{t})$.
\end{proof}

%l5.4 #&#
\begin{lem}%
%%LEAP%%%\label{lem5.4}
\label{lem:eqs}
Equations \eqref{uweq} and \eqref{pqeq} have unique solutions in
$\mathcal L_{N}$ for all $n\ge N:=n_{0}+q+3$. These solutions satisfy
\begin{equation*}
p_{n}(t) = \sum _{j=0}^{q+1} a_{j} u_{j,n}(t),\quad q_{n}(t) = \sum _{j=0}^{q+1}
b_{j} w_{j,n}(t).
\end{equation*}
\end{lem}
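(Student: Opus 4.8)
The plan is to solve each equation in \eqref{uweq} and \eqref{pqeq} by the Banach fixed point theorem, after peeling off the exponential forcing term (which does not lie in $L^2$) and reducing to an honest $L^2$-equation, to which Lemma \ref{lem:An} applies. I would begin by recording three elementary properties of the class $\mathcal L_N$. First, it is a linear subspace of the measurable functions on $\Real_+$: since $A_n$ is linear and $\|\cdot\|$ a norm, $\|A_n(\alpha f+\beta g)\|\le|\alpha|\,\|A_nf\|+|\beta|\,\|A_ng\|$. Second, $L^2(\Real_+)\subseteq\mathcal L_N$: by Lemma \ref{lem:An}, $\|A_nf\|\le(1-\eps)\|f\|<\infty$ for every $n\ge N>n_0$ and every $f\in L^2(\Real_+)$. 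Third, each exponential $e^{jt}$ with $0\le j\le q+1$ belongs to $\mathcal L_N$; this follows exactly as in the proof of Lemma \ref{lem:SDsol}, by writing $e^{jt}=e^{(q+2)t}\widetilde f(t)$ with $\widetilde f(t)=e^{(j-q-2)t}\in L^2(\Real_+)$ and using the substitution identity $(A_nf)(t)=(A_{n-q-2}\widetilde f)(t)$, so that $\|A_ne^{j\cdot}\|=\|A_{n-q-2}\widetilde f\|\le(1-\eps)\|\widetilde f\|<\infty$ whenever $n\ge N=n_0+q+3$ (then $n-q-2\ge n_0+1>n_0$). In particular, $A_n$ maps the forcing terms of both \eqref{uweq} and \eqref{pqeq} into $L^2(\Real_+)$ for every $n\ge N$, and $e^{j\cdot}+L^2(\Real_+)\subseteq\mathcal L_N$.

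Fix $n\ge N$ and $j\in\{0,\dots,q+1\}$. For the first equation in \eqref{uweq}, setting $v:=u_j-e^{j\cdot}$ turns it into $v=A_nv+A_ne^{j\cdot}$, an affine fixed point equation on $L^2(\Real_+)$ whose linear part $A_n$ has Lipschitz constant $1-\eps<1$ by Lemma \ref{lem:An}; the contraction principle on the complete space $L^2(\Real_+)$ yields a unique $v_{j,n}\in L^2(\Real_+)$, and I set $u_{j,n}:=v_{j,n}+e^{j\cdot}\in e^{j\cdot}+L^2(\Real_+)\subseteq\mathcal L_N$. For the second equation in \eqref{uweq} the same argument applies verbatim with $A_n$ replaced by $-A_n$, which is also a contraction of operator norm $\le 1-\eps$, giving $w_{j,n}$. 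Likewise, for \eqref{pqeq} the forcing terms $\sum_{j=0}^{q+1}a_je^{jt}$ and $\sum_{j=0}^{q+1}b_je^{jt}$ are finite linear combinations of the $e^{jt}$, so their $A_n$-images lie in $L^2(\Real_+)$ and the identical fixed point argument produces solutions $p_n$ and $q_n$ in the respective affine subspaces of $\mathcal L_N$.

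To promote this to uniqueness within $\mathcal L_N$, I would invoke the defining property of $\mathcal L_N$ once more: if $u\in\mathcal L_N$ solves $u=A_nu+e^{j\cdot}$ for some $n\ge N$, then $u-e^{j\cdot}=A_nu\in L^2(\Real_+)$ by the very definition of $\mathcal L_N$, so $u$ lies in $e^{j\cdot}+L^2(\Real_+)$ and must coincide with the Banach fixed point solution $u_{j,n}$; the same reasoning pins down $w_{j,n}$, $p_n$ and $q_n$. The identities $p_n=\sum_{j=0}^{q+1}a_ju_{j,n}$ and $q_n=\sum_{j=0}^{q+1}b_jw_{j,n}$ then follow from linearity: the right-hand sides lie in $\mathcal L_N$ and satisfy \eqref{pqeq}, because $\sum_j a_j\big(A_nu_{j,n}+e^{j\cdot}\big)=A_n\big(\sum_j a_ju_{j,n}\big)+\sum_j a_je^{j\cdot}$, and similarly with $b_j$, $w_{j,n}$, so uniqueness in $\mathcal L_N$ forces equality. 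There is no serious obstacle in this argument: the genuine analytic input, the contraction estimate, is already available from Lemma \ref{lem:An}, and the remainder is bookkeeping. The only point demanding a moment of care is recognizing that membership in $\mathcal L_N$ by itself forces the decomposition ``$L^2$-function plus exponential forcing term'', which is exactly what makes the $L^2$ uniqueness of the fixed point sufficient for uniqueness in the larger class $\mathcal L_N$.
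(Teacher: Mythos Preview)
Your proposal is correct and follows essentially the same approach as the paper: peel off the exponential forcing term, reduce to an affine fixed-point equation in $L^2(\Real_+)$, invoke the contraction estimate of Lemma \ref{lem:An} to obtain a unique $L^2$ solution, and then use the defining property of $\mathcal L_N$ (namely $A_n p\in L^2$) to conclude that any $\mathcal L_N$-solution must lie in the affine class ``forcing term $+\,L^2$'' where uniqueness was already established. Your write-up is in fact somewhat more explicit than the paper's about the bookkeeping steps (the linearity of $\mathcal L_N$, the inclusion $e^{j\cdot}\in\mathcal L_N$, and the mechanism by which $\mathcal L_N$-uniqueness reduces to $L^2$-uniqueness), but the argument is the same.
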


\begin{proof}
Consider, e.g., the first equation in \eqref{pqeq}:
%
%e5.11 #&#
\begin{equation}
\label{pAnf}
p = A_{n} p + f
%%LEAP%%%\label{eq5.11}
\end{equation}
where $\displaystyle f(t) = \sum _{j=0}^{q+1} a_{j} e^{jt}$ and rewrite
it as
%
%e5.12 #&#
\begin{equation}
\label{p-f}
p -f = A_{n} (p-f) + A_{n} f.
%%LEAP%%%\label{eq5.12}
\end{equation}
As in the proof of Lemma \ref{lem:SDsol}, the function $A_{n} f$ belongs
to $L^{2}(\mathbb R_{+})$ for all $n\ge N$. By Lemma \ref{lem:An},
$A_{n}$ is a contraction in $L^{2}(\mathbb R_{+})$ for all $n\ge N$. Thus,
the unique solution $p-f\in L^{2}(\mathbb R^{+})$ to equation
\eqref{p-f} is given by the Neumann series. From \eqref{pAnf} it follows
that $A_{n} p \in L^{2}(\mathbb R_{+})$ for all $n\ge N$. The same argument
applies to the rest of the equations. The claimed identities follow from
linearity and uniqueness of solutions.
\end{proof}

%c5.5 #&#
\begin{cor}
\label{cor5.5}
Let $S_{j,n}(z)$ and $D_{j,n}(z)$ be defined by \eqref{SjDj} where
$u_{j,n}$ and $w_{j,n}$ are the unique solutions to \eqref{uweq} guaranteed
by Lemma \ref{lem:eqs} for all sufficiently large $n$. Then the functions
defined in \eqref{SD} satisfy
%
%e5.13 #&#
\begin{equation}
\label{SDjn}
S(z) = \sum _{j=0}^{q+1} a_{j} S_{j,n}(z), \quad D(z) = \sum _{j=0}^{q+1}
b_{j} D_{j,n}(z), \qquad z\in \mathbb \setminus [0,1].
%%LEAP%%%\label{eq5.13}
\end{equation}
\end{cor}

\begin{proof}
By Lemma \ref{lem:SDsol} and Lemma \ref{lem:eqs}
\begin{equation*}
S(e^{t}) = \sum _{j=0}^{q+1} a_{j} S_{j,n}(e^{t}), \quad D(e^{t}) =
\sum _{j=0}^{q+1} b_{j} D_{j,n}(e^{t}), \qquad t\in \mathbb R_{+},
\end{equation*}
which implies \eqref{SDjn} by the Identity theorem.
\end{proof}

%s5.2 #&#
\subsection{The algebraic conditions}
\label{sec5.2}

At this stage the constants $a_{j}$ and $b_{j}$, determined by
$S(z)$ and $D(z)$ themselves, remain unknown. In view of \eqref{SD}, conditions
\eqref{algcond} for $z\in \{z_{1},...,z_{q}\}$ imply
\begin{equation*}
X(z_{k})(S(z_{k})+D(z_{k}))\phi (z_{k}) + z_{k}^{n+2q} X(z_{k}^{-1})(S(z_{k}^{-1})-D(z_{k}^{-1}))
\phi (z_{k}^{-1})=0, \quad k=1,...,q,
\end{equation*}
and, for $z\in \{z_{1}^{-1},...,z_{q}^{-1}\}$,
\begin{equation*}
X(z_{k} )(S(z_{k} )-D(z_{k} )) \phi (z_{k})+ z_{k}^{ n+2q } X(z_{k}^{-1})(S(z_{k}^{-1})+D(z_{k}^{-1}))
\phi (z_{k}^{-1}) =0, \quad k=1,...,q.
\end{equation*}
Adding and subtracting these equations yields
\begin{equation*}
\begin{aligned}
& X(z_{k})\, S(z_{k})\, \phi (z_{k}) + z_{k}^{n+2q} X(z_{k}^{-1})\, S(z_{k}^{-1})
\, \phi (z_{k}^{-1})=0,
\\
& X(z_{k}) D(z_{k}) \phi (z_{k}) - z_{k}^{n+2q} X(z_{k}^{-1}) D(z_{k}^{-1})
\phi (z_{k}^{-1})=0,
\end{aligned}
\qquad k=1,...,q.
\end{equation*}
Substituting the expressions from \eqref{SDjn} we obtain the equivalent
conditions:
%
%e5.14 #&#
\begin{align}
&
\nonumber
\sum _{j=0}^{q+1} \Big( X(z_{k}) \phi (z_{k}) S_{j,n}(z_{k}) + z_{k}^{n+2q}
X(z_{k}^{-1})\phi (z_{k}^{-1}) S_{j,n}(z_{k}^{-1})\Big) a_{j} =0,
\qquad k=1,...,q.
\\
&
\label{eqq}
\sum _{j=0}^{q+1} \Big(X(z_{k}) \phi (z_{k}) D_{j,n}(z_{k}) - z_{k}^{n+2q}
X(z_{k}^{-1}) \phi (z_{k}^{-1}) D_{j,n}(z_{k}^{-1})\Big)b_{j} =0,
%%LEAP%%%\label{eq5.14}
\end{align}
A similar calculation applied to the zeros of $Q(z)$ yields two additional
equations:
%
%e5.15 #&#
\begin{equation}
\label{s0eq}
\begin{aligned}
& \sum _{j=0}^{q+1} \Big( X(s_{0}) \phi (s_{0}) S_{j,n}(s_{0}) + s_{0}^{n+2q}
X(s_{0}^{-1})\phi (s_{0}^{-1}) S_{j,n}(s_{0}^{-1})\Big) a_{j} =0,
\\
& \sum _{j=0}^{q+1} \Big(X(s_{0}) \phi (s_{0}) D_{j,n}(s_{0}) - s_{0}^{n+2q}
X(s_{0}^{-1}) \phi (s_{0}^{-1}) D_{j,n}(s_{0}^{-1})\Big)b_{j} =0.
\end{aligned}
%
%%LEAP%%%\label{eq5.15}
\end{equation}
Finally, \eqref{X0Q} implies
\begin{equation*}
\begin{aligned}
& \lim _{z\to 0} \frac{X(z)(S(z)+D(z))}{Q(z)} = -
\frac{ 2\pi }{\sigma ^{2}_{0}}\frac {1} {s_{0}} (S(0)+D(0)),
\\
& \lim _{z\to 0} \frac{X(z)(S(z)-D(z))}{Q(z)} = -
\frac{ 2\pi }{\sigma ^{2}_{0}}\frac {1} {s_{0}} (S(0)-D(0)),
\end{aligned}
\end{equation*}
which, in view of \eqref{scond}, yields
\begin{equation*}
S(0) =D(0) = -\frac {1} {2} \sigma ^{2}_{0} s_{0} \prod _{j=1}^{q}
\big(- z_{j}^{-1}\big),
\end{equation*}
that is,
%
%e5.16 #&#
\begin{equation}
\label{moreeq}
\begin{aligned}
& \sum _{j=0}^{q+1} S_{j,n}(0)a_{j} = -\frac {1} {2} \sigma ^{2}_{0} s_{0}
\prod _{j=1}^{q} \big(- z_{j}^{-1}\big),
\\
& \sum _{j=0}^{q+1} D_{j,n}(0)b_{j} = -\frac {1} {2} \sigma ^{2}_{0} s_{0}
\prod _{j=1}^{q} \big(- z_{j}^{-1}\big).
\end{aligned}
%
%%LEAP%%%\label{eq5.16}
\end{equation}
To recap, the $2q+4$ unknowns $a_{j}$ and $b_{j}$ in \eqref{SDjn} satisfy
the system of $2q+4$ linear algebraic equations which consists of
\eqref{eqq}, \eqref{s0eq} and \eqref{moreeq}.

The functions defined in \eqref{Phi01} correspond to a particular solution
to this system, denote it by $(a_{0,n},...,a_{q+1,n})$ and
$(b_{0,n},...,b_{q+1,n})$. It follows from \eqref{SDrep} that
\begin{equation*}
S(z) \sim \, a_{q+1,n}z^{q+1}, \quad D(z) \sim \, b_{q+1,n}z^{q+1},
\qquad z\to \infty .
\end{equation*}
Due to definitions \eqref{SD} and \eqref{XX0} and the estimate
$X_{0}(z)\sim 1$ as $z\to \infty $ from \eqref{X0est}, this translates
to
\begin{equation*}
\Phi _{0}(z) \sim \big(a_{q+1,n}+b_{q+1,n}\big) z^{q},\quad \Phi _{1}(z)
\sim \big(a_{q+1,n}-b_{q+1,n}\big) z^{q}, \qquad z\to \infty ,
\end{equation*}
and, in turn, due to definitions \eqref{Phi01}, to
\begin{equation*}
G_{0}(z) \sim \, a_{q+1,n}+b_{q+1,n},\quad G_{1}(z) \sim \, a_{q+1,n}-b_{q+1,n},
\qquad z\to \infty ,
\end{equation*}
where we used the normalization $\phi (0)=1$. The formulas
\eqref{ab2salpha} now follow from \eqref{saG}. This completes the proof
of Theorem \ref{thm2} for $d\in (0,\tfrac {1}{2})$.

%s5.3 #&#
\subsection{The case $d\in (-\tfrac {1} {2},0)$}
%%LEAP%%%\label{sec5.3}
\label{sec:case2}

The proof for $d\in (-\tfrac {1} {2},0)$ is similar, but with a key difference:
in this case, the function $Q(z)$ has no zeros. Consequently, systems
\eqref{aeq} and \eqref{beq} have only $2q+2$ equations. However, unlike
in \eqref{SD_est}, the estimates from \eqref{Phi_est}, \eqref{X0est},
\eqref{X0Q} and definition \eqref{XX0} imply that $S(z)$ and $D(z)$ from
\eqref{SD} now satisfy $\{S(z),D(z)\}= O(z^{q})$ as $z\to \infty $. This
implies that there are also $2q+2$ unknowns, $a_{j}$ and $b_{j}$, now as
well.

%s6 #&#
\section{Proof of Theorem \ref{thm3}}
\label{sec6}

%s6.1 #&#
\subsection{The key approximations}
\label{sec6.1}

The asymptotic analysis as $n\to \infty $ relies on the approximation of
the coefficients for systems \eqref{aeq} and \eqref{beq}, as given by the
following theorem.

%t6.1 #&#
\begin{thmVTEX}%
%%LEAP%%%\label{thm6.1}
\label{thethm}
For any fixed $z\in \mathbb C\setminus [0,1]$, the functions defined in
\eqref{SjDj} satisfy
%
%e6.1 #&#
\begin{equation}
\label{prove1}
\begin{aligned}
S_{j,n}(z) =\, & z^{j}+ \frac {\sin (\pi d) }{ \pi }
\frac{\lambda _{0}}{z -1} n^{-1}+ O(n^{-2}),
\\
D_{j,n}(z) =\, & z^{j} -\frac {\sin (\pi d) }{ \pi }
\frac{\mu _{0}}{z-1} n^{-1} + O(n^{-2}),
\end{aligned}
\qquad \text{as}\ n \to \infty ,
%%LEAP%%%\label{eq6.1}
\end{equation}
where
%
%e6.2 #&#
\begin{equation}
\label{lamu}
\lambda _{0} = \int _{0}^{\infty }q_{1} (\tau ) e^{- \tau} d\tau ,
\quad \mu _{0} = \int _{0}^{\infty }p_{1}(\tau ) e^{- \tau} d\tau ,
%%LEAP%%%\label{eq6.2}
\end{equation}
and $q_{1}(\cdot )$ and $p_{1}(\cdot )$ are the unique solutions to the
integral equations
%
%e6.3 #&#
\begin{equation}
\label{q1p1}
\begin{aligned}
q_{1} (t) = & \
\phantom{+}
\frac {\sin (\pi d) }{ \pi } \int _{0}^{\infty }\frac{e^{- r}}{r+t} q_{1}(r)
dr +1,
\\
p_{1} (t) = & -\frac {\sin (\pi d) }{ \pi } \int _{0}^{\infty }
\frac{e^{- r}}{r+t} p_{1}(r) dr +1,
\end{aligned}
\qquad t\in \mathbb R_{+},
%%LEAP%%%\label{eq6.3}
\end{equation}
such that $q_{1}-1, p_{1}-1\in L^{2}(\mathbb R_{+})$.
\end{thmVTEX}

\begin{proof}
See Appendix \ref{app:B}. %\cite[Appendix D]{bjsup26}.
\end{proof}

The constants in \eqref{lamu} can be computed explicitly.

%t6.2 #&#
\begin{thmVTEX}%
%%LEAP%%%\label{thm6.2}
\label{thm:const}%
\
The constants defined in \eqref{lamu} are given by
%
%e6.4 #&#
\begin{equation}
\label{values}
\lambda _{0} = \frac{\pi}{\sin (\pi d)}d (1+d), \quad \mu _{0} =
\frac{\pi}{\sin (\pi d)}d(1-d).
%%LEAP%%%\label{eq6.4}
\end{equation}
\end{thmVTEX}

\begin{proof}
See Appendix \ref{app:C}. % \cite[Appendix E]{bjsup26}.
\end{proof}

%s6.2 #&#
\subsection{The asymptotic Vandermonde system}
%%LEAP%%%\label{sec6.2}
\label{sec:V}

As in the previous section, we will detail the proof in the case
$d\in (0, \tfrac {1} {2})$ and briefly discuss the adjustments to the complementary
case $d\in (-\tfrac {1} {2},0)$ in Section \ref{sec:case2again}. To proceed,
let us define
%
%e6.5 #&#
\begin{equation}
\label{zeta}
\begin{aligned}
& \zeta _{k} :=
\begin{cases}
z_{k}, & |z_{k}| <1,
\\
z_{k}^{-1}, & |z_{k}|>1,
\end{cases}
\qquad k=1,...,q,
\\
& \zeta _{q+1} := s_{0},
\\
& \rho := \max _{1\le k\le q+1} |\zeta _{k}| \quad \text{and}\quad
\beta :=-\sigma ^{2}_{0}s_{0} \prod _{j=1}^{q} (-1/z_{j}),
\end{aligned}
%
%%LEAP%%%\label{eq6.5}
\end{equation}
where $\rho <1$ due to assumption \textup{($A2_{\theta }$)}. Then, asymptotically
as $n\to \infty $, system \eqref{aeq} takes the form:
%
%e6.6 #&#
\begin{equation}
\label{geom}
\begin{aligned}
& \sum _{j=0}^{q+1} \Big( S_{j,n}(\zeta _{k})+ O(\rho ^{n})\Big) a_{j}
=0, \qquad k=1,...,q+1,
\\
& \sum _{j=0}^{q+1} S_{j,n}(0) a_{j} = \frac {1} {2} \beta ,
\end{aligned}
%
%%LEAP%%%\label{eq6.6}
\end{equation}
where we used the property $X(z)\ne 0$ and assumption \textup{(A1)}. Furthermore,
by Theorems \ref{thethm} and \ref{thm:const}, this system can be reduced
to
%
%e6.7 #&#
\begin{equation}
\label{asys}
\begin{aligned}
& \sum _{j=0}^{q+1} \Big( \zeta _{k}^{j}+
\frac {\sin (\pi d) }{ \pi } \frac{\lambda _{0}}{\zeta _{k} -1} n^{-1}+
O(n^{-2}) \Big) a_{j} =0, \qquad k=1,...,q+1,
\\
& \sum _{j=0}^{q+1} \Big(\mathbf{1}_{\{j=0\}} -
\frac {\sin (\pi d) }{ \pi } \lambda _{0} n^{-1}+ O(n^{-2})\Big) a_{j}
= \frac {\beta }{2} .
\end{aligned}
%
%%LEAP%%%\label{eq6.7}
\end{equation}
A similar calculation yields the asymptotic system for $b_{j}$'s:
%
%e6.8 #&#
\begin{equation}
\label{bsys}
\begin{aligned}
& \sum _{j=0}^{q+1} \Big( \zeta _{k}^{j}-
\frac {\sin (\pi d) }{ \pi } \frac{\mu _{0}}{\zeta _{k} -1} n^{-1}+ O(n^{-2})
\Big) b_{j} =0, \qquad k=1,...,q+1,
\\
& \sum _{j=0}^{q+1} \Big(\mathbf{1}_{\{j=0\}} +
\frac {\sin (\pi d) }{ \pi } \mu _{0} n^{-1}+ O(n^{-2})\Big) b_{j} =
\frac {\beta }{2} .
\end{aligned}
%
%%LEAP%%%\label{eq6.8}
\end{equation}
Define the square Vandermonde matrix of size $q+2$
%
%e6.9 #&#
\begin{equation}
\label{vand}
V =
\begin{pmatrix}
1 & \zeta _{1} & \zeta _{1}^{2} & \cdots & \zeta _{1}^{q+1}
\\
1 & \zeta _{2} & \zeta _{2}^{2} & \cdots & \zeta _{2}^{q+1}
\\
\vdots & & & & \vdots
\\
1 & \zeta _{q+1} & \zeta _{q+1}^{2} & \cdots & \zeta _{q+1}^{q+1}
\\
1 & 0 & 0 & \cdots & 0
\end{pmatrix}
= : V(\zeta _{1},\cdots ,\zeta _{q+1}, 0)
%%LEAP%%%\label{eq6.9}
\end{equation}
and the vectors in $\mathbb C^{q+2}$
%
%e6.10 #&#
\begin{equation}
\label{uoe}
u =
\begin{pmatrix}
\frac{1}{\zeta _{1}-1}
\\
\frac{1}{\zeta _{2}-1}
\\
\vdots
\\
\frac{1}{\zeta _{q+1}-1}
\\
-1
\end{pmatrix}
, \qquad \mathbf{1} =
\begin{pmatrix}
1
\\
1
\\
\vdots
\\
1
\\
1
\end{pmatrix}
, \qquad e =
\begin{pmatrix}
0
\\
0
\\
\vdots
\\
0
\\
1
\end{pmatrix}
.
%%LEAP%%%\label{eq6.10}
\end{equation}
Then systems \eqref{asys} and \eqref{bsys} can be rewritten concisely as
%
%e6.11 #&#
\begin{equation}
\label{sysab}
\begin{aligned}
& \Big(V+ \frac {\sin (\pi d) }{ \pi } \lambda _{0}u\mathbf{1}^{\top }n^{-1}
+ O(n^{-2}) \Big)a= \frac {\beta }{2} e,
\\
& \Big(V- \frac {\sin (\pi d) }{ \pi n} \mu _{0}u\mathbf{1}^{\top }n^{-1}+
O(n^{-2})\Big)b= \frac {\beta }{2} e.
\end{aligned}
%
%%LEAP%%%\label{eq6.11}
\end{equation}
Since we assumed that all $\zeta _{j}$'s in \eqref{vand} are distinct,
see Remark \ref{rem:3}, the matrix $V$ is invertible and hence the systems
in \eqref{sysab} have unique solutions for all sufficiently large
$n$. Recall that for any invertible matrix $A$ and any square matrix
$B$,
\begin{equation*}
(A-\varepsilon B)^{-1} = A^{-1} + \varepsilon A^{-1} B A^{-1} + O(
\varepsilon ^{2}), \quad \varepsilon \to 0,
\end{equation*}
where $O(\varepsilon ^{2})$ is a matrix whose norm is bounded by a constant
times $\varepsilon ^{2}$. Applying this asymptotic formula to the first
system in \eqref{sysab} we obtain
%
%e6.12 #&#
\begin{equation}
\label{aq}
\begin{aligned}
a_{q+1,n} =\, & e^{\top }\Big(V+ \frac {\sin (\pi d) }{ \pi n }
\lambda _{0}u\mathbf{1}^{\top}\Big)^{-1} \frac {\beta }{2} e +O(n^{-2})
=
\\
& \frac {\beta }{2} e^{\top }V^{-1} e -\frac {1} {n}
\frac {\beta }{2} \frac {\sin (\pi d) }{ \pi } \lambda _{0} e^{\top }V^{-1}
u\mathbf{1}^{\top }V^{-1} e + O(n^{-2}), \quad n\to \infty .
\end{aligned}
%
%%LEAP%%%\label{eq6.12}
\end{equation}

The entry in the $i$-th row and $j$-th column of the inverse Vandermonde
matrix $V(x_{1},...,x_{n})^{-1}$ equals the coefficient of the power
$x^{i-1}$ in the Lagrange polynomial
$ P_{j}(x) = \displaystyle \prod _{k\ne j}
\frac{ x-x_{k} }{ x_{j}-x_{k} } $, see, e.g., \cite[\S 2.8.1]{NRbook}.
Hence $e^{\top }V^{-1} e$, being the last entry in the last row and column
of the inverse of $V := V(\zeta _{1},...,\zeta _{q+1}, 0)$, is the coefficient
of $x^{q+1}$ of the polynomial $P_{q+2}(x)$:
%
%e6.13 #&#
\begin{equation}
\label{eVe}
e^{\top }V^{-1} e =(V^{-1})_{q+2,q+2}= \prod _{k=1}^{q+1}
\frac{ 1 }{ -\zeta _{k} }.
%%LEAP%%%\label{eq6.13}
\end{equation}
The leading asymptotic term in \eqref{aq}, therefore, equals
\begin{equation*}
\frac {\beta }{2} e^{\top }V^{-1}e = -\frac {1} {2}\sigma ^{2}_{0} s_{0}
\prod _{j=1}^{q} (-1/z_{j})
\frac{1}{ \prod _{j=1}^{q+1} (-\zeta _{j})}= \frac {1} {2}\sigma ^{2}_{0}
\prod _{j: |z_{j}|<1} \frac {1}{z_{j}^{2}},
\end{equation*}
where we used definition \eqref{zeta}.

The expression $\mathbf{1}^{\top }V^{-1} e$, being the sum over the last
column of $V^{-1}$, equals the sum of coefficients of $P_{q+2}(x)$, that
is,
%
%e6.14 #&#
\begin{equation}
\label{1Ve}
\mathbf{1}^{\top }V^{-1} e = P_{q+2}(1) = \prod _{k=1}^{q+1}
\frac{1-\zeta _{k}}{0-\zeta _{k}} = \prod _{k=1}^{q+1}
\frac{\zeta _{k}-1}{ \zeta _{k}}.
%%LEAP%%%\label{eq6.14}
\end{equation}
Similarly, $e^{\top }V^{-1} u$ is the scalar product of the last row of
$V^{-1}$ with $u$ and hence
%
%e6.15 #&#
\begin{equation}
\label{eVu}
\begin{aligned}
e^{\top }V^{-1} u = & \sum _{k=1}^{q+1} \frac{1}{\zeta _{k}-1}(V^{-1})_{q+2,k}
- (V^{-1})_{q+2,q+2} =
\\
& \sum _{k=1}^{q+1} \frac{1}{\zeta _{k}-1}\frac {1} {\zeta _{k}}
\prod _{j\ne k}\frac {1} {\zeta _{k}-\zeta _{j}} - \prod _{k=1}^{q+1}
\frac {1}{ -\zeta _{k}} = - \prod _{j=1}^{q+1}
\frac {1} {1-\zeta _{j}},
\end{aligned}
%
%%LEAP%%%\label{eq6.15}
\end{equation}
where the last equality is obtained by contour integration of the function
$ f(z)= \frac {1} {z(z-1)}\prod _{j=1}^{q+1} \frac {1} {z-\zeta _{j}}
$. Thus, in view of Theorem \ref{thm:const}, the second order asymptotic
term in \eqref{aq} is
\begin{equation*}
-\frac {1} {n} \frac {\beta }{2} \frac {\sin (\pi d) }{ \pi }
\lambda _{0} e^{\top }V^{-1} u\mathbf{1}^{\top }V^{-1} e =
\frac {1} {n} \frac {1} {2} \sigma ^{2}_{0} d (1+d) \prod _{j: |z_{j}|<1}
\frac {1} {z_{j}^{2}}
\end{equation*}
Combining all the expressions, we obtain the first expression in
\eqref{ablim}:
\begin{equation*}
a_{q+1,n} = \frac {1} {2}\sigma ^{2}_{0} \left (\prod _{j:|z_{j}|<1}
\frac{1}{ z_{j}^{2}} \right ) \Big( 1+ \frac {1} {n} d (1+d) \Big) + O(n^{-2}).
\end{equation*}
The second expression is obtained similarly:
\begin{equation*}
b_{q+1,n} = \frac {\beta }{2} e^{\top}\Big(V-
\frac {\sin (\pi d) }{ \pi n } \mu _{0}u\mathbf{1}^{\top}\Big)^{-1} e =
\frac {1} {2} \sigma ^{2}_{0} \left (\prod _{j:|z_{j}|<1}
\frac{1}{ z_{j}^{2}}\right ) \Big(1 - \frac {1} {n} d (1-d) \Big) + O(n^{-2}).
\end{equation*}
This completes the proof of Theorem \ref{thm3} for
$d\in (0,\tfrac {1} {2})$.

%s6.3 #&#
\subsection{The case $d\in (-\tfrac {1} {2},0)$}
%%LEAP%%%\label{sec6.3}
\label{sec:case2again}

For $d\in (-\tfrac {1} {2},0)$, the function $Q(z)$ has no zeros. As explained
in Section \ref{sec:case2}, each of the systems \eqref{aeq} and
\eqref{beq} has $q+1$ equations and the same number of unknowns. If we
redefine $\beta := \sigma ^{2}_{0} \prod _{j=1}^{q} (-1/z_{j})$ and discard
$\zeta _{q+1}$ in \eqref{zeta}, we arrive at the asymptotic systems
\eqref{sysab}, with $q+1$ equations and unknowns each. The remaining calculations
are unchanged and lead to the same result.

%%%%%%%%%%%%%%%%%%%%%%%%%%%%%%%%%%%%%%%%%%%%%%%%%%%%%%%%%%%%%%%%%%%%%%%%%
%\begin{appendix}
%%\section{}
%\end{appendix}

\begin{appendix}

\section{Properties of $\mathbf{Q}(z)$}\label{app:AA}

\subsection{Polylogarithm}
To construct a sectionally holomorphic extension for the fGn density, the covariance sequence of the fGn 
$$%\label{fGnc}
\gamma_0(k) = \tfrac 1 2 \big(|k+1|^{2d+1} -2|k|^{2d+1} + |k-1|^{2d+1}\big),
$$
suggests considering the series
\begin{equation}\label{muser}
\mu(z) :=\sum_{k=1}^\infty k^{2d+1} z^k.
\end{equation}
This series is convergent in the open unit disk $D$, where it defines a holomorphic function, 
known as the polylogarithm $\mathrm{Li}_{s}(z)$ with parameter $s:= -2d-1$. 
The function $\mu(z)$ admits a holomorphic extension to $\mathbb C \setminus [1,\infty)$, which 
can be constructed in a number of ways. One approach is by means of the Lindel\"of-Wirtinger expansion, see \cite[Sec. 5]{T45}, valid for $\Re(s)<0$ -
that is, for all $d\in (-\frac 1 2, \frac 1 2)$ in our case:
\begin{equation}\label{Li}
\mu(z) = \Gamma(2+2d) \sum_{k=-\infty}^\infty (-\log z +2\pi i k)^{-2-2d},\quad z\in \mathbb C\setminus [1,\infty),
\end{equation}
where the principal branch of the argument $\arg(-\log z +2\pi i k)\in (-\pi,\pi)$ is taken for each term.
%
% Due to infinite summation, the limits as $z\to t^{\pm}$ for $t<0$ coincide, so that by the "continuity principle" $\mu(z)$ is indeed holomorphic on $\Real_-$. The cut on $(1,\infty)$ comes from the term, corresponding to $k=0$ and the power function. 
%
In view of this formula, $\mu(z)$ has finite limits as it approaches the interval $(1,\infty)$ from the upper and lower half-planes. This renders $\mu(z)$  sectionally holomorphic in $\mathbb C\setminus [1,\infty)$.

Alternatively, the extension can also be constructed using the integral 
$$
j^{-a} = \frac 1 {\Gamma(a)} \int_0^\infty t^{a-1} e^{-jt}dt, \quad a>0.
$$ 
Plugging it into series \eqref{muser} yields 
\begin{align}
&
\label{mueq}
\mu(z)  =  \sum_{j=1}^\infty j^2 j^{-(1-2d)} z^j =  
\frac 1 {\Gamma(1-2d)} \int_0^\infty t^{-2d}
\bigg(\sum_{j=1}^\infty j^2   (ze^{-t})^j\bigg) dt =\\
&\nonumber
\frac 1 {\Gamma(1-2d)} \int_0^\infty t^{-2d}
 \frac {ze^{-t}(1+ze^{-t})}{(1-ze^{-t})^3} dt =  
%\frac 1 {\Gamma(1-2d)} \int_0^\infty t^{-2d}
% \frac {ze^{ t}(e^t+z )}{(e^t-z )^3} dt = \\
 \frac 1 {\Gamma(1-2d)} \int_1^\infty (\log \tau)^{-2d}
 \frac {z (\tau+z )}{(\tau-z )^3} d\tau,
\end{align}
where we used the summation formula 
$$
\sum_{k=1}^\infty k^2 r^k = \frac {r(1+r)}{(1-r)^3}, \quad |r|<1.
$$
Clearly, the obtained integral representation remains well defined for all $z\not \in \mathbb C\setminus (1,\infty)$. 
Both constructions will be instrumental in further calculations.

\subsection{A formula for $\mathbf{Q^+(t)}$}

As argued above, $\mu(z)$ and $\mu(z^{-1})$ are sectionally holomorphic in $\mathbb C \setminus [1,\infty)$ and 
$\mathbb C \setminus [0,1]$, respectively, and hence, cf.  \eqref{Qdef},
\begin{equation}\label{Qdefmu}
Q(z) =  \frac{1}{4\pi} (z^{-1} -2 + z) \big(\mu(z)+\mu(z^{-1})), 
\end{equation}
is sectionally holomorphic in $\mathbb C\setminus \Real_+$ (with a singularity at $z=1$). The next lemma provides a useful formula for its limit.

\begin{lem}\label{lem:3.1}
For $d\in (-\frac 1 2, \frac 1 2)\setminus \{0\}$,  
\begin{equation}\label{Qplus}
Q^+(t) = \frac{1}{8\pi}     \frac {(1-t)^2}{\sin ( \pi d)}  
\Big(
\pi   A^+(t) e^{\pi d i}+2\Re\big(ie^{\pi d i}  B(t)\big)  
\Big),\qquad t\in (1,\infty),
\end{equation}
where  
\begin{align}
\label{Aplus}
A^+(t) = & \frac {4d ( 2d+1)}{\Gamma(1-2d)} \frac {(\log t)^{-2d-2}} t,
\\
\label{Bplus}
B(t)  = &
-\frac {2d}{\Gamma(1-2d)}\frac {(\log t)^{-2d-1}}  t  \int_0^\infty  \left(1+\dfrac{\log   u +\pi i}{\log t}\right)^{-2d-1}\frac {1} {(u  + 1)^2}du.
\end{align}
\end{lem}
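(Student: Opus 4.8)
The plan is to express $Q^+(t)$ through the polylogarithm $\mu$ of the previous subsection and its two representations \eqref{Li} and \eqref{mueq}. Since $t^{-1}\in(0,1)\subset D$, the function $z\mapsto\mu(z^{-1})$ is holomorphic near $(1,\infty)$, so \eqref{Qdefmu} gives at once $Q^+(t)=\frac{1}{4\pi}\frac{(1-t)^2}{t}\bigl(\mu^+(t)+\mu(t^{-1})\bigr)$, using $z^{-1}-2+z=(1-z)^2/z$. Thus everything reduces to computing the upper boundary value $\mu^+(t)$ and adding the ordinary value $\mu(t^{-1})$.

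For the leading contribution I would use the Lindel\"of--Wirtinger expansion \eqref{Li} in the form $\mu(z)=\Gamma(2+2d)(-\log z)^{-2-2d}+h(z)$, where $h$ is holomorphic across $(1,\infty)$ and real on $(0,\infty)$. As $z\to t^+$ only the branch of $(-\log z)^{-2-2d}$ changes, picking up the factor $e^{2\pi d i}$; combined with the analogous term of $\mu(z^{-1})=\Gamma(2+2d)(\log z)^{-2-2d}+h(z^{-1})$ this produces $\Gamma(2+2d)(\log t)^{-2-2d}(1+e^{2\pi d i})=2\cos(\pi d)\,e^{\pi d i}\,\Gamma(2+2d)(\log t)^{-2-2d}$, which by the reflection formula $\Gamma(2d)\Gamma(1-2d)=\pi/\sin(2\pi d)$ is precisely the term $\pi A^+(t)e^{\pi d i}$ in \eqref{Qplus}; in particular this already fixes $\Im Q^+(t)$, in agreement with \eqref{ImQ}. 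What remains is the real quantity $h(t)+h(t^{-1})$, which I would evaluate by inserting \eqref{mueq} into $h(z)=\mu(z)-\Gamma(2+2d)(-\log z)^{-2-2d}$: writing $\mu(t^{-1})$ via \eqref{mueq} and using $\tau\mapsto\tau^{-1}$ folds its integral over $(0,1)$ together with the one for $\mu^+(t)$ over $(1,\infty)$ into a single finite-part integral over $(0,\infty)$ with kernel $\sign(\tau-1)|\log\tau|^{-2d}$ against $z(\tau+z)/(\tau-z)^3$. After the substitution $\tau=e^x$ (which moves the pole to $x=0$), one integration by parts using $e^x(e^x+1)/(e^x-1)^3=-\tfrac{d}{dx}\bigl(e^x/(e^x-1)^2\bigr)$, and a rotation of the contour onto the negative real axis, the singular part reorganises into a real multiple of $(\log t)^{-2-2d}$, while the rest becomes $\int_0^\infty\bigl(1+\frac{\log u+\pi i}{\log t}\bigr)^{-2d-1}\frac{du}{(u+1)^2}$, the factor $\log u+\pi i$ coming from the logarithm acquired along the rotated contour and the kernel $(u+1)^{-2}$ from $e^x/(e^x-1)^2$. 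Symmetrising the contributions of $\tau>1$ and $\tau<1$ via $\overline{Q^+(t)}=Q^-(t)$ (cf. \eqref{Qpmsym}) turns this real combination into $\frac{t}{\sin(\pi d)}\Re\bigl(ie^{\pi d i}B(t)\bigr)$, and adding the two contributions yields \eqref{Qplus}.

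I expect the main obstacle to be the passage to the boundary value past the third-order pole of the kernel in \eqref{mueq}: the individual integrals are only conditionally convergent --- in fact divergent at $\tau=\infty$ before the regrouping --- so the integration by parts, the contour rotation, and the extraction of the finite part (near both $\tau=t$ and $\tau=1$) have to be justified with care, and the branch of the power function, hence every factor $e^{\pi d i}$, must be tracked exactly. Once the reflection formula is used, the $\Gamma$-bookkeeping that brings the constants into the form \eqref{Aplus}--\eqref{Bplus} is routine.
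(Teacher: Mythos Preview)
Your route differs from the paper's, and the first half is sound: extracting the $k=0$ term of the Lindel\"of--Wirtinger series isolates exactly the branch-cut contribution, and the identification of $\Gamma(2+2d)(\log t)^{-2-2d}(1+e^{2\pi di})$ with the $\pi A^+(t)e^{\pi d i}$ term via the reflection formula checks out. Since the remainder $h(t)+h(t^{-1})$ is real, this also pins down $\Im Q^+(t)$ immediately.

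The gap is in the $B$ part. You do not actually compute $h(t)+h(t^{-1})$; you only list manipulations (fold, integrate by parts, ``rotate the contour onto the negative real axis'') without specifying which contour is rotated or why this lands on the integrand $\bigl(1+\frac{\log u+\pi i}{\log t}\bigr)^{-2d-1}(u+1)^{-2}$. The difficulty you yourself flag is real: writing $\mu^+(t)$ via \eqref{mueq} means taking the boundary value of an integral with a \emph{third-order} pole on the contour, and your sketch does not say how the finite part is extracted or why the integration by parts and the rotation commute with that regularization. (The integral in \eqref{mueq} is absolutely convergent at $\tau=\infty$, so that is not the issue; the only singularity is at $\tau=t$.) The appeal to $\overline{Q^+}=Q^-$ at the end is legitimate as a symmetry but does not by itself produce the specific form $\Re(ie^{\pi d i}B(t))$.

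The paper avoids the singular boundary value entirely. It keeps $z$ strictly off the real axis and integrates $f(\zeta)=(\log\zeta)^{-2d}(\zeta+z)/(\zeta-z)^3$ over the upper and lower semicircles $C_\pm$: the residue at $\zeta=z$ gives $A(z)$, while the three real-axis segments $(-\infty,0),(0,1),(1,\infty)$ yield a $2\times2$ linear system for $\widetilde\mu(z),\widetilde\nu(z)$ in terms of $A(z)$ and $B(z)=\frac{1}{\Gamma(1-2d)}\int_0^\infty(\log\tau+\pi i)^{-2d}(\tau-z)(\tau+z)^{-3}d\tau$, the factor $\log\tau+\pi i$ coming from $\log\zeta$ on $\zeta\in(-\infty,0)$ rather than from any contour rotation. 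Only after solving the system does one send $z\to t^+$, with every integral already convergent; \eqref{Bplus} is then a single integration by parts in $B(t)$. Your decomposition is conceptually cleaner for the singular part, but the paper's residue argument handles both pieces at once and never confronts the third-order boundary singularity.
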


\begin{proof}
Define the functions, cf. \eqref{mueq},  
\begin{align*}
\widetilde \mu(z) :=  &\, z^{-1} \mu(z) =
 \frac 1 {\Gamma(1-2d)} \int_1^\infty (\log \tau)^{-2d}
 \frac {   \tau+z  }{(\tau-z )^3} d\tau, 
\\
\widetilde \nu(z) := & -z^{-1} \mu(z^{-1}) = 
   \frac 1 {\Gamma(1-2d)} \int_0^1 (\log \tau^{-1})^{-2d}
 \frac {  (\tau+z )}{(\tau -z )^3}  d\tau.
\end{align*}
Then, by definition \eqref{Qdefmu},
\begin{equation}\label{QAB}
Q(z) =   \frac{1}{4\pi} (1-z)^2  \big( \widetilde \mu(z)- \widetilde \nu(z)), \quad z\in \mathbb C\setminus \Real_+.
\end{equation}

Let $C_+$ be the semi-circular contour in the upper half plane which excludes the origin.
Define 
$$
f(\zeta) = \frac{(\log \zeta)^{-2d} (\zeta+z)}{(\zeta-z)^3}, 
$$
where $\arg(\zeta)\in (-\pi,\pi]$. Then for $z$ inside $C_+$,  
\begin{equation}\label{Res}
\oint_{C_+} \frac{(\log \zeta)^{-2d} (\zeta+z)}{(\zeta-z)^3}d\zeta =2\pi i\, \mathrm{Res}(f;z),
\end{equation}
with the residue 
$$
\mathrm{Res}(f;z) = \frac 1 {2!} \lim_{\zeta\to z}\frac{d^2}{d\zeta^2}\Big( (\log \zeta)^{-2d} (\zeta+z)\Big) =
2d ( 2d+1)\frac 1 z(\log z)^{-2d-2}.
$$

As the radius of the contour tends to infinity and its base approaches the real line, the integral in \eqref{Res} converges:  
\begin{align*}
&
\oint_{C_+} \frac{(\log \zeta)^{-2d} (\zeta+z)}{(\zeta-z)^3}d\zeta \to  
\int_{-\infty}^0 \frac{(\log (-t)+\pi i)^{-2d} (t+z)}{(t-z)^3}dt
+ \\
&
e^{-2\pi d i}\int_{0}^1 \frac{(\log t^{-1})^{-2d} (t+z)}{(t-z)^3}dt
+ 
\int_{1}^\infty \frac{(\log t)^{-2d} (t+z)}{(t-z)^3}dt =\\
&
\int_{0}^\infty \frac{(\log  t +\pi i)^{-2d} (t-z)}{(t+z)^3}dt
+ 
\Gamma(1-2d)\Big(
e^{-2\pi d i}\widetilde \nu(z)
+ 
 \widetilde \mu(z)\Big).
\end{align*}
Similarly, integrating over the semi-circular contour in the lower half plane we get
\begin{align*}
&
0 =  -\oint_{C_-} \frac{(\log \zeta)^{-2d} (\zeta+z)}{(\zeta-z)^3}d\zeta \to 
\int_{-\infty}^0 \frac{(\log (-t)-\pi i)^{-2d} (t+z)}{(t-z)^3}dt
+ \\
&
e^{2\pi di}\int_0^1 \frac{(\log t^{-1})^{-2d} (t+z)}{(t-z)^3}dt
+
\int_1^\infty \frac{(\log t)^{-2d} (t+z)}{(t-z)^3}dt =\\
&
\int_{0}^\infty \frac{(\log   t -\pi i)^{-2d} (t-z)}{(t+z)^3}dt+
\Gamma(1-2d)\Big(
e^{2\pi d i}\widetilde \nu(z)
+ 
 \widetilde \mu(z)\Big).
\end{align*}
Define the functions
\begin{align*}
A(z) := & \frac {1}{\Gamma(1-2d)} \frac {4d ( 2d+1)} z(\log z)^{-2d-2}, \\
B(z) := & \frac 1{\Gamma(1-2d)} \int_{0}^\infty \frac{(\log   \tau +\pi i)^{-2d} (\tau-z)}{(\tau+z)^3}d\tau.
\end{align*}
The obtained equations can be then combined in the linear system
$$
\begin{pmatrix*}
1 & e^{-2\pi d i} \\
1 & e^{\phantom +2\pi d i} 
\end{pmatrix*}
\begin{pmatrix*}
\widetilde \mu(z) 
\\
\widetilde \nu(z)
\end{pmatrix*} =
\begin{pmatrix*}
\pi i A(z)-B(z)
\\
-\overline { B(\overline z)}
\end{pmatrix*}.
$$
Solving this system we obtain    
\begin{align*}
\widetilde \mu(z) = & \frac 1{2i} \frac 1{\sin (2\pi d)} \Big( e^{2\pi di} (\pi i A(z)-B(z)) +e^{-2\pi di} \overline { B(\overline z)}\Big),
\\ 
\widetilde \nu(z) = &
\frac 1{2i} \frac 1{\sin (2\pi d)}
\Big(
-(\pi i A(z)-B(z))-\overline { B(\overline z)}
\Big).
\end{align*}
Substitution into \eqref{QAB} yields the formula 
$$
Q(z) =    \frac{1}{4\pi} (1-z)^2  \frac 1{2i} \frac 1{\sin (2\pi d)}  
\Big(
\pi i A(z) (e^{2\pi di} + 1)- (e^{2\pi di} +1) B(z) +(e^{-2\pi di} +1)\overline{B(\overline z)}
\Big),
$$
which holds in the upper half-plane. 
Taking the limit $z\to t^+$ we arrive at
\begin{align*}
Q^+(t) = &  \frac{1}{4\pi} (1-t)^2  \frac 1{2i} \frac 1{\sin (2\pi d)} 
\Big(
\pi i A^+(t) (e^{2\pi di} + 1)- (e^{2\pi di} +1) B(t) +(e^{-2\pi di} +1) \overline { B(t)}
\Big) =\\
%&
%  \frac{1}{8\pi}     \frac {(1-t)^2}{\sin ( \pi d)}  
%\Big(
%\pi   A^+(t) e^{\pi d i}+ie^{\pi d i}  B(t) -ie^{-\pi d i} \overline{B(t)}
%\Big) = \\
&
  \frac{1}{8\pi}     \frac {(1-t)^2}{\sin ( \pi d)}  
\Big(
\pi   A^+(t) e^{\pi d i}+2\Re(ie^{\pi d i}  B(t))  
\Big).
\end{align*}
The expression for $B(t)$ in \eqref{Bplus} is obtained by integrating by parts. 
The limit $A^+(t)$, for $t\in (1,\infty)$, equals $A(t)$  and is thus given by \eqref{Aplus}.
\end{proof}

\subsection{Proof of Theorem \ref{lem:Q}
}\label{ssec:Qprop}
We are now prepared to derive the properties of $Q(\cdot)$ from \eqref{Qdefmu} as stated in Theorem \ref{lem:Q}.

\medskip

%\eqref{Qi} 
(i) The identities in (3.1): 
$$%\label{Qsym}
Q(z)=Q(z^{-1}), \quad
 \overline{Q(z)}=Q(\overline z),
\qquad z\in \mathbb C\setminus \Real_+.
$$
follow from definition \eqref{Qdefmu} and representation \eqref{mueq}. 
The properties in \eqref{Qpmsym} are derived from these identities by direct calculation.
Equality  \eqref{ImQ} holds due to \eqref{Qplus}-\eqref{Aplus} and \eqref{Qpmsym}.

\medskip

(ii)
Equality \eqref{f0Q} is verified by evaluating  expression \eqref{Qdefmu} on the unit circle, using formula \eqref{Li}.

\medskip

(iii) The series representation in \eqref{Li} implies
\begin{equation}\label{mu1}
\big|(z-1)^{2+2d} \mu(z)\big| \xrightarrow[z\to 1]{}\Gamma(2+2d),
\end{equation}
and, consequently, the estimate as $z\to 1$ in (3.4).
 From Jonqui\'ere's formula \cite[eq. 7.190]{L81},
which relates polylogarithm to the Hurwitz zeta function and its asymptotic expansion  \cite[\S 1.4, p 25]{MOFS66},
it follows that
\begin{equation}\label{muinf}
\mu(z) = - \frac 1{\Gamma(-2d)} (\log z)^{-1-2d} \big(1+o(1)\big), \quad z\to \infty.
\end{equation}
This implies the estimate as $z\to\infty$ in (3.4) and, since $Q(z)=Q(z^{-1})$, also as $z\to  0$.

\medskip
(iv) %\eqref{Qiv} 
Since $Q(z)$ shares its zeros in $(-1,0)$ with the function $r(z):=\mu(z)+\mu(z^{-1})$, it suffices to show that $r(s)$ changes its
sign as $s$ varies through $(-1,0)$. In view of \eqref{mueq}, $r(s)$ is continuous at $s=-1$ and, for any $d\in (-\tfrac 1 2,\tfrac 1 2)$, 
$$
r(-1)=2\mu(-1) = -\frac 1 {\Gamma(1-2d)} \int_1^\infty (\log \tau)^{-2d}
 \frac {  (\tau-1 )}{(\tau+1 )^3} d\tau < 0,    
$$
where the inequality holds since $\Gamma(1-2d)>0$. 
On the other hand, \eqref{muser} implies that $\mu(s)=O(s)$ as $s\to 0$ and hence, based on \eqref{muinf},
$$
r(s) = \mu(s^{-1})+O(s) =
- \frac 1{\Gamma(-2d)} (\log |s|^{-1})^{-1-2d} \big(1+o(1)\big), \quad s\to 0.
$$
Since $\Gamma(-2d)<0$ for $d\in (0,\frac 1 2)$,  $r(s)$ is positive in a vicinity of $s=0$ and hence it must have at least one zero in $(-1,0)$.

\medskip
(v) Expression \eqref{Qplus} for $Q^+(t)$ defines a plane curve, which is smooth for $t\in (1,\infty)$ and, due to \eqref{ImQ}, does not pass through the origin. 
Thus, the function $\eta(t)=\arg(Q^+(t))$, being the angle drawn by this curve relative to the semi-axis $\Real_+$, has derivatives of all orders.  
Property \eqref{etaeta} 
follows from \eqref{Qpmsym}. Based on \eqref{Li},   
$$
\begin{aligned}
& 
\mu^+(t) =
%\lim_{z\to t^+}\mu(z) =\, 
%\Gamma(2+2d) \bigg(\sum_{k\ne 0}  (-\log t +2\pi i k)^{-2-2d} + \big(\log t^{-1}\big)^{-2-2d}\bigg)=
\Gamma(2+2d)    \big(\log t^{-1}\big)^{-2-2d} + O(1),
\\
& 
\mu^{-}(t^{-1}) =
%\lim_{z\to t^+}\mu(z^{-1}) =\, 
% \Gamma(2+2d) \bigg(\sum_{k\ne 0}  (\log t +2\pi i k)^{-2-2d} + \big(\log t^{-1}\big)^{-2-2d}e^{-2\pi di}\bigg)=
\Gamma(2+2d)\big(\log t^{-1}\big)^{-2-2d}e^{-2\pi di} + O(1),
\end{aligned} \qquad t\nearrow  1.
$$
Substitution into definition \eqref{Qdefmu} shows that  
$$
Q^+(t) = \frac {\Gamma(2+2d)}{4\pi}\frac{(t-1)^2}{t} \big(\log t^{-1}\big)^{-2-2d} \big(1+e^{-2\pi di} + O ((1-t)^{ 2+2d})\big), \quad t\nearrow 1,
$$
which implies \eqref{etalim1}.
The estimate \eqref{etalim0} is derived using Lemma \ref{lem:3.1}. The expression in \eqref{Bplus} satisfies 
$$
%&
B(t)  = 
% \frac 1{\Gamma(1-2d)} \int_{0}^\infty \frac{(\log   s +\pi i)^{-2d} (s-t)}{(s+t)^3}ds =  \\
%&
%-\big(\log s +\pi i\big)^{-2d}\frac s {(s+t)^2}\bigg|_{0}^\infty
%-\frac {2d}{\Gamma(1-2d)} \int_0^\infty  \frac {(\log   s +\pi i)^{-2d-1}} {(s+t)^2}ds
%= \\
%&
%-\frac {2d}{\Gamma(1-2d)}\frac {(\log t)^{-2d-1}}  t  \int_0^\infty  \left(1+\dfrac{\log   u +\pi i}{\log t}\right)^{-2d-1}\frac {1} {(u  + 1)^2}du =\\
%&
-\frac {2d}{\Gamma(1-2d)}\frac {(\log t)^{-2d-1}}  t \Big(1+ c_1(\log t)^{-1}  + O((\log t)^{-2})\Big), \qquad t\to \infty,
$$
for some constant $c_1\in \mathbb C$. Plugging this and \eqref{Aplus} into \eqref{Qplus} gives
$$
 Q^+(t) =   \frac{1}{2\pi}       \frac {  d}{\Gamma(1-2d)}\frac { (1-t)^2  (\log t)^{-2d-1}}  t  \Big(1+  \frac {c_2}{\log t} + O((\log t)^{-2}) \Big), \quad t\to\infty,
$$
for some constant $c_2\in \mathbb C$. In view of \eqref{ImQ}, it follows that   
$$
\eta(t) =   -\pi \one{d<0} + \frac c{\log t}  +O((\log t)^{-2}), \quad t\to\infty,
$$
for some $c\in \Real$. 
The estimate \eqref{etalim0} can now be obtained from \eqref{etaeta}.
\qed

\subsection{Proof of Lemma \ref{lem:X0}
}\label{ssec:X0}
The estimate as $z\to\infty$ in \eqref{X0est} holds because $\eta(\cdot)$ is bounded on $[0,1]$. To derive the estimate as $z\to 1$, let us write 
$$
X_0(z)    =\exp\left( - \int_0^1 \frac{ d }{\tau-z}d\tau
+ 
\frac 1{ \pi  }\int_0^1 \frac{\eta(\tau)+\pi d}{\tau-1}d\tau +
\frac {z-1}{ \pi  }\int_0^1 \frac{\eta(\tau)+\pi d}{(\tau-z)(\tau-1)}d\tau
\right).
$$
Here, the integrals are finite due to \eqref{etalim1}, and, moreover, the last integral converges to a finite limit as $z\to 1$. 
This implies the claimed estimate: 
$$
X_0(z) = c_2\left(\frac{z-1} z\right)^{-d} (1+O(z-1))=
c_2(z-1)^{-d} \big(1+O(z-1)\big), \quad z\to 1,
$$
with $c_2 = \exp\left( \displaystyle \frac 1{ \pi  }\int_0^1 \frac{\eta(\tau)+\pi d}{\tau-1}d\tau \right)$.

To obtain the estimate as $z\to 0$, consider the integral  
\begin{align*}
  \int_0^{ 1/2} \frac { 1}{\log \tau^{-1}}\frac{ 1 }{\tau-z}d\tau   = &
 -  z \int_0^{1/2}\frac{\log \log \tau^{-1}}{(\tau-z)^2}d\tau + C_1 + O(z) = \\
% &
% -   z \frac d {dz}\int_0^{1/2}\frac{\log \log \tau^{-1}}{ \tau-z  }d\tau + C_1 + O(z)=  \\
% &
% -   z \frac d {dz}\int_0^{1/2}\frac{\log \log \tau -\pi i}{ \tau-z  }d\tau + C_1 + O(z) =\\
 &
  -   z \frac d {dz}\int_0^{1/2}\frac{\log \log \tau }{ \tau-z  }d\tau + C_2 + O(z), 
 \quad  z\to 0, 
\end{align*}
where the first equality is obtained by integration by parts and in the second, the branch of $\log z$ with $\arg(z)\in [0,2\pi)$ is used.
As shown in \cite{M59},
$$
 \int_0^{1/2}\frac{\log \log \tau }{ \tau-z  }d\tau =   \log z \big(1-\log\log z\big)+\pi i \log \log z + \Phi(z)
$$
where $\Phi(z)$ is a function analytic in the vicinity of 0. Taking the derivative, we get  
$$
\frac d{dz}  \int_0^{1/2}\frac{\log \log \tau }{ \tau-z  }d\tau = 
 -\frac 1 z\log\log z  
 +\pi i \frac 1 {\log z}\frac 1 z + \Phi'(z)
$$ 
and, consequently,
$$
 \int_0^{ 1/2} \frac { 1}{\log \tau^{-1}}\frac{ 1 }{\tau-z}d\tau=
     \log\log z  + O\big((\log z)^{-1}\big), \quad z\to 0.
$$
Due to this estimate and \eqref{etalim0},  
\begin{align*}
X_0(z) = & \exp\left(\frac 1{ \pi  }\int_0^1 \frac{\pi \one{d<0}}{\tau-z}d\tau
+
\frac 1{ \pi  }\int_0^{1/2} \frac{\eta(\tau)-\pi \one{d<0}}{\tau-z}d\tau
+ C_3 +O(z)
\right) =\\
&
\left(\frac {z-1}z\right)^{\one{d<0}}
\exp\left( \frac c{ \pi  }\int_0^{1/2}\frac 1 {\log \tau^{-1}} \frac{1}{\tau-z}d\tau
+ C_4 + O(z)
\right) =\\
&
z^{-\one{d<0}}\exp\Big(
\frac c{ \pi  }
\log\log z  + C_4 + O\big((\log z)^{-1}\big)
\Big),\quad z\to 0,
\end{align*}
which verifies the estimate in \eqref{X0est} as $z\to 0$ with $a:=c/\pi$.
\qed

\section{Proof of Lemma \ref{lem:identity}
}\label{app:A}

We will prove  formula \eqref{XQfla}  
for $d\in (0,\frac 1 2)$, omitting the similar proof of \eqref{XQfla2} 
for the case $d\in (-\frac 1 2,0)$. 
Let us first verify it under the assumption that the zero $s_0\in (-1,0)$ from Theorem  \ref{lem:Q} 
(iv) %\eqref{Qiv} 
is simple and it is the only zero of $Q(\cdot)$
inside the unit disk. We will argue later that this is indeed the case. 
To this end, consider the simply connected region  $\Omega=\overline D\setminus [s_0,1]$ depicted in Figure \ref{fig1}, in which 
$Q(\cdot)$ is holomorphic and non-vanishing. 
\begin{figure}[h]
% Requires \usepackage{graphicx}
\begin{tikzpicture}[scale = 0.8]
% The axes
\draw[help lines,->] (-3,0) -- (3,0) coordinate (xaxis);
\draw[help lines,->] (0,-3) -- (0,3) coordinate (yaxis);

\fill[black] (-1.5, 0) circle (1pt);
\node[below] at (-1.7, 0) {\small $s_0$};
\draw[line width = 0.8pt] (0, 0) circle (2.5);
\draw[line width = 0.8pt] (-1.5,0) -- (2.5, 0);
\fill[black] (0, 0) circle (1pt);
\node[below] at (-0.23, 0) {\small $0$};
\node[below] at (2.65, 0) {\small $1$};

\end{tikzpicture}
 
\caption{\label{fig1} Simply connected region $\Omega$ }
\end{figure} 

It follows from \eqref{ImQ} that $\Im(Q^+(\frac 12 ))<0$ and, since $Q(z)$ is continuous in $\Omega$, we can find a point 
$z_0$ with $\Re(z_0)=\frac 1 2$ and sufficiently small $\Im(z_0)>0$ such that 
$$
\Im(Q(\tfrac 1 2+iy ))<0, \quad \forall\ 0\le y\le \Im(z_0).
$$
For $z\in \Omega$, define  
\begin{equation}\label{Lz}
L(z) := \log Q(z_0) + \int_{z_0}^z\frac{Q'(\zeta)}{Q(\zeta)}d\zeta,
\end{equation}
where $\log(z) = \log |z| + i \arg(z)$ with $\arg(z)\in (-\pi,\pi]$ and the integration is carried out on an arbitrary simple curve which starts at $z_0$ and ends at $z$. This definition is independent of the curve's choice and the function $L(\cdot)$ is holomorphic in $\Omega$ satisfying, see e.g., \cite[Ch 3,\S 6]{Sbook03},
\begin{equation}\label{ExpL}
\exp(L(z))=Q(z), \quad z\in \Omega.
\end{equation}
The next lemma formulates some of its relevant properties.

\begin{lem}\label{lem:id}
\

\medskip

\begin{enumerate}

\item  For $t\in (0,1)$,
\begin{equation}\label{id1}
L^\pm (t) =  \log Q^\pm(t).
\end{equation}
\item For  $\lambda \ne 0$,
\begin{equation}\label{id2}
L(e^{i\lambda}) = \log f_0(\lambda).
\end{equation}

\item For $t\in (s_0,0)$,
\begin{equation}\label{id3}
L^+(t)-L^-(t)  = -2\pi i.
\end{equation}
\end{enumerate}
 
\end{lem}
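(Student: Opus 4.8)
The plan is to verify the three identities by carefully tracking which branch of the logarithm is selected by the integral definition \eqref{Lz}, using the fact that $L$ is a continuous antiderivative of $Q'/Q$ along paths in $\Omega$, together with the symmetry $Q(z)=Q(z^{-1})$ from Lemma \ref{lem:Q}(i) and the sign information \eqref{ImQ}. First I would record the elementary observation that, along any simple curve $z(\tau)$ in $\Omega$, $L(z(\tau))$ is the unique continuous branch of $\log Q(z(\tau))$ that equals $\log Q(z_0)$ at the starting point; in particular $\Im L(z(\tau))$ is obtained from $\arg Q(z_0)\in(-\pi,\pi]$ by continuous variation of $\arg Q$. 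Since $\Re(z_0)=\tfrac12$ with small positive imaginary part and $\Im Q(z_0)<0$ by the choice made just before the lemma, the starting value satisfies $\arg Q(z_0)\in(-\pi,0)$, so the branch is "calibrated" in the lower half of the $Q$-plane near $z_0$.

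For \eqref{id1}, I would let $z\to t^+$ and $z\to t^-$ for $t\in(0,1)$ along paths in $\Omega$. Because $L$ is sectionally holomorphic in $\overline D$ cut along $[s_0,1]$ and $\exp(L)=Q$ by \eqref{ExpL}, the boundary limits satisfy $\exp(L^\pm(t))=Q^\pm(t)$; thus $L^\pm(t)=\log Q^\pm(t)$ up to an additive integer multiple of $2\pi i$, and the point is to show this integer is zero, i.e. that $L^\pm(t)=\log|Q^\pm(t)|+i\arg Q^\pm(t)$ with the principal value $\arg Q^\pm(t)\in(-\pi,\pi]$. This follows by starting at $z_0$ (where $\arg Q(z_0)\in(-\pi,0)$) and continuously moving to a point just above (resp. below) $t$: the curve $Q^+(\cdot)$ on $(1,\infty)$ described in the proof of Theorem \ref{lem:Q}(v) together with \eqref{ImQ} shows the argument of $Q$ stays in an interval of length $<2\pi$ around the real axis, so no extra winding is accumulated. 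For \eqref{id2}, I would similarly connect $z_0$ to $e^{i\lambda}$ inside $\Omega$ and use $\exp(L(e^{i\lambda}))=Q(e^{i\lambda})=f_0(\lambda)>0$ from \eqref{restr}; since $f_0(\lambda)$ is real and positive, $L(e^{i\lambda})\in\log f_0(\lambda)+2\pi i\mathbb Z$, and the branch calibration at $z_0$ again pins down $L(e^{i\lambda})=\log f_0(\lambda)$ (the argument returns to $0$ because $Q$ does not vanish and does not wind around the origin along this arc, by \eqref{ImQ} and continuity).

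For \eqref{id3}, the jump across the slit $(s_0,0)$: let $t\in(s_0,0)$ and compare $L^+(t)$ and $L^-(t)$. Their difference is $2\pi i$ times the winding number of $Q(\zeta)$ around $0$ as $\zeta$ traverses a small positively oriented loop encircling the segment $(s_0,0)$ inside $\overline D$. Such a loop can be deformed in $\overline D\setminus\{s_0\}$ to a small positively oriented circle around the simple zero $s_0$ of $Q$, around which $Q$ winds exactly once; hence $L^+(t)-L^-(t)=\pm 2\pi i$. The sign is fixed by the orientation convention: approaching from the upper half-plane corresponds to the "end" of the cut in a way that, combined with the simple zero at $s_0$ having winding $+1$, yields $L^+(t)-L^-(t)=-2\pi i$. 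I expect the main obstacle to be exactly this bookkeeping of orientation and branch — making rigorous that the accumulated argument picks up $-2\pi i$ rather than $+2\pi i$, which requires being careful about the geometry of $\Omega$ (the slit runs from $s_0$ through $0$ to $1$) and the direction in which $Q^+$ and $Q^-$ are defined. Everything else is a direct application of $\exp\circ L=Q$, the symmetry and positivity of $Q$ on $\partial D$, and the sign data from \eqref{ImQ}.
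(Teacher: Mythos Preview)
Your overall strategy---use $\exp L=Q$ to conclude $L=\log Q+2\pi i k$ for some integer $k$, then pin down $k$ by tracking the argument---is exactly what the paper does, and your treatment of part~(3) via the residue of $Q'/Q$ at the simple zero $s_0$ matches the paper's contour argument. For the $L^+$ case of part~(1) your reasoning is also essentially the paper's: along a path from $z_0$ to $t^+$ that stays just above the cut $(0,1)$, the sign information \eqref{ImQ} forces $\Im Q<0$ throughout, so $\arg Q$ never leaves $(-\pi,0)$ and the principal branch is selected.

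The genuine gap is in part~(2) and, correspondingly, in the $L^-$ case of part~(1). To reach $e^{i\lambda}$ (or $t^-$) from $z_0$ inside $\Omega$, any path must pass near the endpoint $z=1$ of the slit, where $Q$ has a power singularity $Q(z)\sim c\,(z-1)^{-2d}$. Your claim that ``$Q$ does not wind around the origin along this arc, by \eqref{ImQ} and continuity'' is not justified: \eqref{ImQ} concerns only the boundary values on $(0,1)$ and $(1,\infty)$ and says nothing about the argument of $Q$ along a path in the interior or near the corner at $1$. In fact, near $z=1$ the argument does move: the paper computes that $\arg Q^+(1-\varepsilon)\to-\pi d$ as $\varepsilon\to0$ by \eqref{etalim1}, while the contribution of a small arc around $z=1$ to $\Im L$ is exactly $+\pi d$ (the integral $I(\varepsilon)\to\pi d\,i$, coming from $Q'/Q\sim -2d/(z-1)$ integrated over a quarter-turn). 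It is precisely this cancellation $-\pi d+\pi d=0$ that yields $\Im L(e^{i\lambda})=0$ and hence \eqref{id2}. Without isolating and computing this arc contribution, the integer $k$ cannot be determined; a bare winding-number heuristic does not distinguish $k=0$ from $k=\pm1$. The paper then bootstraps the $L^-$ case of \eqref{id1} from \eqref{id2} by continuing along the unit circle and back down the lower side of the cut (Figure~4), again passing through the corner at $1$. Your proposal should incorporate this local computation near $z=1$; once that is in place, the rest of your outline goes through.
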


\begin{proof}
\

\medskip

(1). Fix a point $t\in (0,1)$ and a small constant $\eps>0$. Integrating in \eqref{Lz} on the curve depicted in red in Figure \ref{fig2} 
 gives
\begin{equation}\label{L1}
L(t+i\eps ) = \log Q(z_0) 
+  \int_{z_0}^{\frac 1 2+i\eps}\frac{Q'(\zeta)}{Q(\zeta)}d\zeta
+  \int_{\frac 1 2+i\eps}^{t+i\eps}\frac{Q'(\zeta)}{Q(\zeta)}d\zeta.
\end{equation}
\begin{figure}[t]
% Requires \usepackage{graphicx}
\begin{tikzpicture}[scale = 0.8]
% The axes
\draw[help lines,->] (-3,0) -- (3,0) coordinate (xaxis);
\draw[help lines,->] (0,-3) -- (0,3) coordinate (yaxis);

\fill[black] (-1.5, 0) circle (1pt);
\node[below] at (-1.7, 0) {\small $s_0$};
\draw[line width = 0.8pt] (0, 0) circle (2.5);
\draw[line width = 0.8pt] (-1.5,0) -- (2.5, 0);
\fill[black] (0, 0) circle (1pt);
\node[below] at (-0.23, 0) {\small $0$};
\node[below] at (2.65, 0) {\small $1$};

\fill[black] (1.25, 1) circle (1pt);
\node[right] at (1.25, 1) {\small $z_0$};
\fill[black] (1.25, 0) circle (1pt);
\node[below] at (1.25, 0) {\small $\frac 1 2$};
\fill[black] (2.0, 0) circle (1pt);
\node[below] at (2.0, 0) {\small $t$};

\draw[line width=0.8pt, color=red,
decoration={markings,
mark=at position 0.4cm with {\arrow[line width=0.5pt]{>}},
mark=at position 1.1cm with {\arrow[line width=0.5pt]{>}}
},
postaction=decorate
] (1.25,1.0) -- (1.25,0.1) -- (2,0.1);

\end{tikzpicture}
\caption{\label{fig2}}
\end{figure}
Let $U(z):=\Re(Q(z))$ and $V(z):=\Im(Q(z))$. Then   
\begin{align*}
&
 \int_{z_0}^{\frac 1 2+i\eps}\frac{Q'(\zeta)}{Q(\zeta)}d\zeta = 
\int_{z_0}^{\frac 1 2+i\eps}\frac{U'(\zeta)+iV'(\zeta)}{U(\zeta)+iV(\zeta)}d\zeta =\\
&
\int_{z_0}^{\frac 1 2+i\eps}\frac{ U'(\zeta)U(\zeta)+ V'(\zeta)V(\zeta) }{U(\zeta)^2+V(\zeta)^2}d\zeta
+
i\int_{z_0}^{\frac 1 2+i\eps}\frac{ U(\zeta)V'(\zeta) -U'(\zeta) V(\zeta)   }{U(\zeta)^2+V(\zeta)^2}d\zeta =\\
&
\int_{z_0}^{\frac 1 2+i\eps}\frac{d}{d\zeta}\log |Q(\zeta)| d\zeta
+
i\int_{z_0}^{\frac 1 2+i\eps}\frac{d}{d\zeta}\arccot \frac{U(\zeta)}{V(\zeta)}d\zeta =
%\\
%&
%\log |Q(\tfrac 1 2+i\eps)| - \log |Q(z_0)| + i \arccot \frac{U(\tfrac 1 2+i\eps)}{V(\tfrac 1 2+i\eps)}
%-i \arccot \frac{U(z_0)}{V(z_0)} = \\
%&
\log Q(\tfrac 1 2+i\eps)-\log Q(z_0).
\end{align*}
The last equality is true due to the identity 
$$
\arg (Q(z))=-\pi + \arccot (U(z)/V(z))\in (-\pi, 0], 
$$
which holds since $\Im(Q(\zeta))<0$ on the integration curve.
Similarly, 
$$
\int_{\frac 1 2+i\eps}^{t+i\eps}\frac{Q'(\zeta)}{Q(\zeta)}d\zeta = \log Q(t+i\eps)-\log Q(\tfrac 12+i\eps).
$$
The identity \eqref{id1} for $L^+(t)$ is obtained by substituting these expressions into \eqref{L1} and taking the limit $\eps\to 0$.
The identity for $L^-(t)$ is verified while proving (2), see below.
\medskip

\begin{figure}[t]
% Requires \usepackage{graphicx}
\begin{tikzpicture}[scale = 0.8]
% The axes
\draw[help lines,->] (-3,0) -- (3,0) coordinate (xaxis);
\draw[help lines,->] (0,-3) -- (0,3) coordinate (yaxis);

\fill[black] (-1.5, 0) circle (1pt);
\node[below] at (-1.7, 0) {\small $s_0$};
\draw[line width = 0.8pt] (0, 0) circle (2.5);
\draw[line width = 0.8pt] (-1.5,0) -- (2.5, 0);
\fill[black] (0, 0) circle (1pt);
\node[below] at (-0.23, 0) {\small $0$};
\node[below] at (2.65, 0) {\small $1$};

%\fill[black] (1.25, 1) circle (1pt);
%\node[right] at (1.25, 1) {\small $z_0$};
%\fill[black] (1.25, 0) circle (1pt);
%\node[below] at (1.25, 0) {\small $\frac 1 2$};

%\draw[line width=0.8pt, color=red,
%decoration={markings,
%mark=at position 0.4cm with {\arrow[line width=0.5pt]{>}},
%mark=at position 1.3cm with {\arrow[line width=0.5pt]{>}}
%},
%postaction=decorate
%] (1.25,1.0) -- (1.25,0.1) -- (2.2,0.1);

\draw[line width=0.8pt, color=red] (2.5,0.3) arc (90:180:0.3);

\draw[line width = 0.8pt, color=red,
decoration={markings,
mark=at position 1cm with {\arrow[line width=0.5pt]{>}}
},
postaction=decorate
] (2.5, 0.0) arc (0:50:2.5);

\draw[line width = 0.8pt] (2.5, 0.0) arc (0:6.7:2.5);

\fill[black] (1.63, 1.9) circle (1pt);
\node[above] at (2.3, 1.9) {\small $z=e^{i\lambda}$};

\end{tikzpicture}
\caption{\label{fig3}}
\end{figure}

(2). Fix a point on the unit circle $z=e^{i\lambda}$ with $\lambda\ne 0$ and a small constant $\eps>0$. Then by integrating along the curve depicted in red in Figure \ref{fig3} we get
\begin{equation}\label{Le}
L(e^{i\lambda}) = L^+(1-\eps) + I(\eps) +  J(\eps),
\end{equation}
where  $I(\eps)$ is the integral over the arc of radius $\eps$ around $z=1$:
$$
I(\eps) := \int_{\pi}^{\pi/2+\mathrm{asin}(\eps/2)} \frac{Q'(1+\eps e^{i s})}{Q(1+\eps e^{i s})}d (\eps e^{i s})
$$
and  $J(\eps)$ is the integral over the arc on the unit circle:
\begin{equation}\label{Jeps}
J(\eps) := \int_{\mathrm{acos}(1-\eps^2/2)}^{\lambda}\frac{Q'(e^{i\alpha })}{Q(e^{i\alpha})}d(e^{i\alpha}) = \log f_0(\lambda) - \log f_0\big(\mathrm{acos}(1-\tfrac 1 2\eps^2)\big),
\end{equation}
where we used \eqref{f0Q}.
By representation \eqref{Li} and definition \eqref{Qdefmu},
$$
Q(z)=   c \frac{(z-1)^2}{z} 
\big(
  (\log z)^{-2-2d} + \phi(z)
\big),
$$
where $c$ is some constant and $\phi(z)$ is a function, holomorphic in a vicinity of $z=1$. 
Consequently, 
$$
\frac{Q'(\zeta)}{Q(\zeta)}_{\big|\zeta=1+\eps e^{is}} = -2d (\eps e^{is})^{-1} (1+o(1)), \quad \eps \to 0,
$$
and, therefore, 
\begin{equation}\label{Ieps}
I(\eps)\xrightarrow[\eps\to 0]{} \pi d i.
\end{equation}

Let us now estimate the first term in the right hand side of \eqref{Le}. To this end,  by \eqref{Qplus}-\eqref{Bplus},
$$
|Q^+(t)| =    
 \frac{1}{8 }     \frac {(1-t)^2}{\sin ( \pi d)}  
 \left|
    A^+(t)  + O(1)
\right|  =  
\frac{\Gamma (2d+2)  \cos(\pi d) }{2\pi } \frac { (1-t)^2  }t\Big( (\log t)^{-2d-2} + O(1)\Big), \quad t\searrow 1.
$$
\begin{figure}[t]
% Requires \usepackage{graphicx}
\begin{tikzpicture}[scale = 0.8]
% The axes
\draw[help lines,->] (-3,0) -- (3,0) coordinate (xaxis);
\draw[help lines,->] (0,-3) -- (0,3) coordinate (yaxis);

\fill[black] (-1.5, 0) circle (1pt);
\node[below] at (-1.7, 0) {\small $s_0$};
\draw[line width = 0.8pt] (0, 0) circle (2.5);
\draw[line width = 0.8pt] (-1.5,0) -- (2.5, 0);
\fill[black] (0, 0) circle (1pt);
\node[below] at (-0.23, 0) {\small $0$};
\node[below] at (2.65, 0) {\small $1$};

\draw[line width = 0.8pt, color=red,
decoration={markings,
mark=at position 3cm with {\arrow[line width=0.5pt]{>}},
mark=at position 6cm with {\arrow[line width=0.5pt]{>}},
mark=at position 9cm with {\arrow[line width=0.5pt]{>}},
mark=at position 12cm with {\arrow[line width=0.5pt]{>}}
},
postaction=decorate
] (2.5, 0.0) arc (0:351:2.5);

\draw[line width = 0.8pt] (2.5, 0.0) arc (0:50:2.5);

\fill[black] (1.63, 1.9) circle (1pt);
\node[above] at (2.3, 1.9) {\small $z=e^{i\lambda}$};

\draw[line width = 0.8pt, color=red] (2.47, -0.4) arc (-90:-180:0.3) -- (1.5,-0.1);
\fill[black] (1.5, 0) circle (1pt);
\node[above] at (1.5, 0) {\small $t$};

\end{tikzpicture}
\caption{\label{fig4}}
\end{figure}
Thus, in view of \eqref{Qpmsym}, 
$$
|Q^+(1-\eps)| = \Big|Q^+\Big(\frac 1 {1-\eps}\Big)\Big| =
\frac{\Gamma (2d+2)  \cos(\pi d) }{2\pi }       \eps^{ -2d}(1 + O(\eps)), \quad \eps\to 0,
$$
and, due to \eqref{fGnf} 
it follows that
\begin{equation}\label{Qf0}
 |Q^+(1-\eps)|/ f_0(\eps) \to 1, \quad \eps\to 0.
\end{equation}
Substitution of \eqref{id1}, \eqref{Jeps}, \eqref{Ieps} and \eqref{Qf0} into \eqref{Le} yields the identity in \eqref{id2}:
\begin{align*}
 &
L(e^{i\lambda}) =\  L^+(1-\eps) + I(\eps) +  J(\eps) = \\
&  \log |Q^+(1-\eps)| + i \arg(Q^+(1-\eps)) +   
   I(\eps)+ \log f_0(\lambda) -  \log f_0\big(\mathrm{acos}(1-\tfrac 1 2\eps^2)\big)
 \xrightarrow[\eps\to 0]{} \log f_0(\lambda),
\end{align*}
where we used asymptotics \eqref{etalim1}.
The identity for $L^-(t)$ in \eqref{id1} is verified by similar calculations, if we integrate over the curve depicted in red in Figure \ref{fig4}.

\begin{figure}[b]
% Requires \usepackage{graphicx}
\begin{tikzpicture}[scale = 0.8]
% The axes
\draw[help lines,->] (-3,0) -- (3,0) coordinate (xaxis);
\draw[help lines,->] (0,-3) -- (0,3) coordinate (yaxis);

\fill[black] (-1.5, 0) circle (1pt);
\node[below] at (-1.7, 0) {\small $s_0$};
\draw[line width = 0.8pt] (0, 0) circle (2.5);
\draw[line width = 0.8pt] (-1.5,0) -- (2.5, 0);
\fill[black] (0, 0) circle (1pt);
\node[below] at (-0.23, 0) {\small $0$};
\node[below] at (2.65, 0) {\small $1$};

\fill[black] (1.25, 1) circle (1pt);
\node[right] at (1.25, 1) {\small $z_0$};
 
\draw[line width=0.8pt, color=red,
decoration={markings,
mark=at position 1.5cm with {\arrow[line width=0.5pt]{<}}
},
postaction=decorate
]
(1.25, 1) .. controls (-1,0.5) and (-1,-1.5) .. (-2.0,-0.5); 

\draw[line width=0.8pt, color=red,
decoration={markings,
mark=at position 1.5cm with {\arrow[line width=0.5pt]{<}}
},
postaction=decorate
]
(-2.0,-0.5) .. controls (-2,1.5) and (1,1.5) .. (1.25,1); 

\fill[black] (-0.54, 0) circle (1pt);
\node[above] at (-0.54, 0) {\small $t$}; 

\node[left] at (0, 1.5) {\small $\Gamma$}; 

\end{tikzpicture}
\caption{\label{fig5}}
\end{figure} 
 
\medskip 
 
(3) Fix a point $t\in (s_0,0)$ and choose any contour $\Gamma$ in $D\setminus [0,1]$ which passes through $z_0$ and crosses the interval $(s_0,0)$
at the point $t$, see Figure \ref{fig5}. By the argument principle,  
\begin{equation}\label{ResQ}
\oint_\Gamma \frac{Q'(\zeta)}{Q(\zeta)}d\zeta = 2\pi i.
\end{equation}
Let $\Gamma_1$ be the subcurve  which starts at $z_0$ and ends at $t$ and $\Gamma_2=\Gamma \setminus \Gamma_1$ the subcurve which starts at $t$ and ends  at $z_0$. Then 
by definition \eqref{Lz},
\begin{align*}
\oint_\Gamma \frac{Q'(\zeta)}{Q(\zeta)}d\zeta =  &  \int_{\Gamma_1}  \frac{Q'(\zeta)}{Q(\zeta)}d\zeta 
+
\int_{\Gamma_2}\frac{Q'(\zeta)}{Q(\zeta)}d\zeta =
L^-(t) - L^+(t).
\end{align*}
Combining this with \eqref{ResQ} yields \eqref{id3}.

\end{proof}

\begin{figure}[b]
% Requires \usepackage{graphicx}
\begin{tikzpicture}[scale = 0.8]
% The axes
\draw[help lines,->] (-3,0) -- (3,0) coordinate (xaxis);
\draw[help lines,->] (0,-3) -- (0,3) coordinate (yaxis);

\fill[black] (-1.5, 0) circle (1pt);
\node[below] at (-1.65, 0) {\small $s_0$};
\draw[line width=0.8pt, color=blue,
decoration={markings,
mark=at position 5cm with {\arrow[line width=0.5pt]{>}},
mark=at position 11cm with {\arrow[line width=0.5pt]{>}},
mark=at position 14cm with {\arrow[line width=0.5pt]{>}}
},
postaction=decorate
] (2.5,0.08) arc (2:358:2.5)  -- (-1.4, -0.1);

\draw[line width=0.8pt, color=blue] (-1.38, 0.08) arc (35:325:0.15);

\draw[line width=0.8pt, color=blue,
decoration={markings,
mark=at position 1cm with {\arrow[line width=0.5pt]{>}}
},
postaction=decorate
] (-1.4, 0.08) -- (2.5, 0.08); 

\node[left] at (-1.9,1.9) {\small $C$};

\end{tikzpicture}
\caption{\label{fig6}}
\end{figure} 

We are now prepared to prove \eqref{XQfla}. Fix a point $z$ inside the contour depicted in Figure \ref{fig6}.
Since $L(\cdot)$ is holomorphic inside this contour, by the Cauchy theorem,
$$
\oint_C \frac{L(\zeta)}{\zeta-z}d\zeta = 2\pi i L(z).
$$
On the other hand, by integrating separately on different parts of the contour and taking the limit towards the boundary of $\Omega$ in Figure \ref{fig1} we obtain
\begin{align*}
\oint_C \frac{L(\zeta)}{\zeta-z}d\zeta = & \int_0^{2\pi} \frac{L(e^{i\lambda})}{e^{i\lambda}-z}de^{i\lambda} 
 + \int_{s_0}^0 \frac{L^+(t)-L^-(t)}{t-z}dt + \int_0^1 \frac{L^+(t)-L^-(t)}{t-z}dt =\\
&
\int_0^{2\pi} \frac{\log f_0(\lambda)}{e^{i\lambda}-z}de^{i\lambda} 
 - \int_{s_0}^0 \frac{2\pi i}{t-z}dt + \int_0^1 \frac{\log Q^+(t)-\log Q^-(t)}{t-z}dt =\\
& 
\int_0^{2\pi} \frac{\log f_0(\lambda)}{e^{i\lambda}-z}de^{i\lambda} 
 - 2\pi i\log \frac{z}{z-s_0} + 2i\int_0^1 \frac{ \arg(Q^+(t))}{t-z}dt,
\end{align*}
where we used the identities from Lemma \ref{lem:id} and the second symmetry in \eqref{Qpmsym}. Equating the two expressions we arrive at 
$$
L(z) = 
\frac {1}{2\pi  i}\int_0^{2\pi} \frac{\log f_0(\lambda)}{e^{i\lambda}-z}de^{i\lambda} 
 -  \log \frac{z}{z-s_0} + \frac 1 \pi\int_0^1 \frac{ \arg(Q^+(t))}{t-z}dt.
$$
If we now compute the exponent of both sides and take into account the definitions  \eqref{X0}
and  \eqref{psizdef} and property \eqref{ExpL}, we obtain \eqref{XQfla}.

\begin{figure}[b]
% Requires \usepackage{graphicx}
\begin{tikzpicture}[scale = 0.8]
% The axes
\draw[help lines,->] (-3,0) -- (3,0) coordinate (xaxis);
\draw[help lines,->] (0,-3) -- (0,3) coordinate (yaxis);

\fill[black] (-1.5, 0) circle (1pt);
\node[below] at (-1.7, 0) {\small $s_0$};
\draw[line width = 0.8pt] (0, 0) circle (2.5);
\draw[line width = 0.8pt] (-1.5,0) -- (2.5, 0);
\fill[black] (0, 0) circle (1pt);
\node[below] at (-0.23, 0) {\small $0$};
\node[below] at (2.65, 0) {\small $1$};

\fill[black] (1.1, 1.1) circle (1pt);
\node[below] at (1.1, 1.1) {\small $s_1$};
\draw[line width = 0.8pt] (0,0) -- (1.1, 1.1);

\fill[black] (0.5, -1) circle (1pt);
\node[below] at (0.5,-1) {\small $s_2$};
\draw[line width = 0.8pt] (0,0) -- (0.5,-1);

\end{tikzpicture}
 
\caption{\label{fig7}}
\end{figure} 

It remains to argue that $s_0$ is the only zero of $Q(z)$ inside the unit disk and it is simple.
The function $Q(z)$, being  holomorphic in the compact set $\Omega$, may have at most finitely many zeros in it, say $k$,  in addition to $s_0$.
In this case, let us redefine the region $\Omega$ by excluding a line segment from the origin to each zero, see Figure \ref{fig7} which illustrates the case $k=2$.

Based on our calculations above, each such segment corresponding to the zero $s_j$ contributes the multiplicative factor $z/(z-s_j)$ to the formula \eqref{XQfla}, which becomes
$$
X_0(z) = \psi(z) Q(z)\dfrac{z}{z-s_0}\prod_{j=1}^k \dfrac{z}{z-s_j}.
$$
If we  divide this equation by $z$ and let $z\to 0$, the expression in the left hand side diverges to $\infty$, in view of \eqref{X0est}, 
while the expression in the right hand side tends to 0 by the estimate in (3.4) unless $k=0$. This contradiction shows that $Q(z)$ has no zeros 
inside the unit disk besides $s_0$.

If $s_0$ has a non-unit multiplicity $\mu_0>1$, the right hand side in \eqref{ResQ} is multiplied by $\mu_0$. Consequently, we get the formula 
$$
X_0(z) = \psi(z) Q(z)\left(\dfrac{z}{z-s_0}\right)^{\mu_0},
$$
to which the same argument by contradiction applies. 

\section{Proof of Lemma \ref{lem:An}
}\label{app:Aop}
 
Let
$\eps:= \frac 1 2 -\frac 1 2 |\sin (\pi d)|\in (0,\frac 1 2)$ and  $\delta := \frac 1 2|\sin (\pi d)|^{-1}-\frac 1 2>0$,
so that $|\sin(\pi d)|(1+\delta)= 1-\eps<1$.
By Lemma \ref{lem:tildeh}, 
the function $h(\cdot)$ is continuous and  $\lim_{r\to 0}h(r)=1$.
Hence there exists $n_0$  such that 
\begin{equation}\label{suphe}
\sup_{r\ge 0}|h(r)e^{-nr}|<1+\delta, \quad \forall n\ge n_0.
\end{equation}
In addition to the operator defined in \eqref{An},
define the operator 
\begin{equation}\label{Aop}
(  A f) (t):=  \frac {1-\eps} \pi \int_0^\infty \frac{ e^{-r}}{ e^{r+t}-1}f(r) dr.
\end{equation}
Then for all $n\ge n_0+1$, 
\begin{equation}\label{Anf}
|(A_nf)(t)| \le   \frac {1-\eps} \pi \int_0^\infty \frac{ e^{-r}}{ e^{r+t}-1} |f(r)| dr =   (A|f|)(t), \quad t\in \Real_+,
\end{equation}
and therefore it suffices to verify \eqref{contr} 
for $A$:
\begin{equation}\label{contrA}
\|Af\|\le (1-\eps) \|f\|, \quad \forall f\in L^2(\Real_+).
\end{equation}
To this end, for $f, g\in L^2(\Real_+)$,
\begin{align*}
&
\big|\langle g,A  f\rangle\big| = 
\left|
 \int_0^\infty g(t) \frac {1-\eps} \pi \int_0^\infty \frac{  e^{-  r}}{ e^{r+t}-1}f(r) drdt\right| \le \\
&
\frac {1-\eps} \pi \int_0^\infty\int_0^\infty  \Big(\frac{1-e^{-t}}{1-e^{-r}}\Big)^{1/4}  \frac{|g(t)| }{ (e^{r+t}-1)^{1/2}}
\Big(\frac{1-e^{-r}}{1-e^{-t}}\Big)^{1/4}\frac{|f(r)| }{ (e^{r+t}-1)^{1/2}} drdt  \le \\
&
\frac {1-\eps} \pi 
\Big(\int_0^\infty\int_0^\infty  \Big(\frac{1-e^{-t}}{1-e^{-r}}\Big)^{\frac 1 2}  \frac{g(t)^2 }{  e^{r+t}-1 }drdt\Big)^{\frac 1 2}
%\cdot \\
%&
%\hskip 2.1in
\Big( \int_0^\infty\int_0^\infty    
\Big(\frac{1-e^{-r}}{1-e^{-t}}\Big)^{\frac 1 2}\frac{f(r)^2 }{  e^{r+t}-1 } drdt
\Big)^{\frac 1 2}
\end{align*}
where the latter bound holds by the Cauchy-Schwarz inequality.
The terms on the right hand side satisfy the bound:
\begin{align*}
&
\int_0^\infty\int_0^\infty  \Big(\frac{1-e^{-t}}{1-e^{-r}}\Big)^{\frac 1 2}  \frac{g(t)^2 }{  e^{r+t}-1 }drdt = \\
&
\int_0^\infty
g(t)^2 (1-e^{-t})^{\frac 12 }
\left(\int_0^\infty \frac{(1-e^{-r})^{-\frac 12}}{  e^{r+t}-1 }dr\right) dt =\\
&
\int_0^\infty
g(t)^2 (1-e^{-t})^{\frac 12 }
\left(\int_0^1 \frac{(1-s)^{-\frac 12}}{  e^{t}-s }ds \right) dt =\\
&
\int_0^\infty
g(t)^2 
\left(\frac{1-e^{-t}}{e^t-1}\right)^{1/2}
\left(\int_1^{e^t/(e^t-1)} \frac{ (u-1)^{-\frac 12}}{   u }du \right) dt \le \\
&
\int_0^\infty
g(t)^2  e^{-t/2}
\left(\int_1^{\infty} \frac{ (u-1)^{-\frac 12}}{   u }du \right) dt \le 
B(\tfrac 1 2, \tfrac 12)\int_0^\infty
g(t)^2    dt = \pi \|g\|^2.
\end{align*}
Therefore, we obtain 
$
\big|\langle g,A  f\rangle\big| \le (1-\eps) \|g\|\|f\|
$
and consequently 
\begin{equation}\label{prein}
\|A f\|^2 = \langle A f,A f \rangle \le (1-\eps) \|A f\|\|f\|.
\end{equation}
It remains to argue that $\|A  f\|<\infty$ for all $f\in L^2(\Real_+)$, in which case \eqref{contrA} follows from \eqref{prein}, 
and consequently, the assertion of the lemma holds in view of \eqref{Anf}.

By linearity of $A$, no generality will be lost if $f(x)\ge 0$ is assumed. For $R>0$, define the bounded function $f_R(x):=f(x)\wedge R$.
Then 
\begin{align*}
\|Af_R\|^2   
% =\int_0^\infty \frac {1-\eps} \pi \int_0^\infty \frac{ e^{-r}}{ e^{r+t}-1}f(r) dr \frac {1-\eps} \pi \int_0^\infty \frac{ e^{-s}}{ e^{s+t}-1}f(s) ds dt  \\
%& 
\le\, & R^2 \int_0^\infty \int_0^\infty e^{-r-s}  \left(\int_0^\infty \frac{ 1 }{ e^{r+t}-1}     \frac{ 1}{ e^{s+t}-1}dt\right)  drds \le \\
&
R^2 \int_0^\infty \int_0^\infty e^{-r-s}  \left(\int_0^\infty \frac{ 1 }{ (t+r)(t+s)}dt     \right)  drds =
R^2 \int_0^\infty \int_0^\infty e^{-r-s}   \frac{\log(s/r)}{s-r}  drds = \\
&
2 R^2 \int_0^\infty e^{-s} \int_0^s e^{-r }   \frac{\log(s/r)}{s-r}  drds \le 
2 R^2 \int_0^\infty e^{-s}ds \int_1^\infty     \frac{\log u}{u(u-1)}  du  <\infty.
\end{align*}
Consequently,  \eqref{prein} implies 
\begin{equation}\label{inR}
\|A f_R\|      \le (1-\eps)  \|f_R\|.
\end{equation}
Since $f_R(x)$ is nondecreasing in $R$ for each $x\in \Real_+$, by the monotone convergence theorem, $\|f_R\|\to \|f\|$ as $R\to\infty$.
By definition \eqref{Aop}, the function $(A f_R)(t)$ also increases in $R$ for all $t$ and hence $\|A f_R\|\to \|Af\|$ 
as well, including the case $\|Af\|=\infty$.   Taking $R\to\infty$ in \eqref{inR} we conclude that $\|Af\|<\infty$.

\section{Proof of Theorem \ref{thethm}
}\label{app:B}

The key element of the proof is the auxiliary integral operator, cf. \eqref{An},
$$
(B_nf)(t) =  \frac {\sin (\pi d) }{ \pi } \int_0^\infty \frac{e^{-n \tau}}{\tau+t}   f(\tau)  d\tau.
$$
As in Lemma \ref{lem:An} 
(see also \cite[Lemma 5.6]{ChK}) it can be argued that 
$B_n$ is a contraction in $L^2(\Real_+)$, i.e., there exists $\eps\in (0,1)$ such that, for any $n\ge 1$, 
\begin{equation}\label{Bneq}
\|B_n f\|\le (1-\eps) \|f\|, \quad \forall f\in L^2(\Real_+).
\end{equation}
Using this estimate, we can show, as in Lemma \ref{lem:eqs}, 
that the equation $q  = B_n q + 1$, that is, 
\begin{equation}\label{qneq}
q  (t) = \frac {\sin (\pi d) }{ \pi }
\int_0^\infty \frac{e^{-n r}}{r+t}   q  (r)  dr +1, \quad t\in \Real_+,
\end{equation}
has a unique solution, denoted by $q_n$, such that $q_n-1\in L^2(\Real_+)$.
Changing the integration variable in  \eqref{qneq} yields
$$
q  (t/n) = \frac {\sin (\pi d) }{ \pi }
\int_0^\infty \frac{e^{-\tau }}{\tau +t }   q  (\tau/n)  d\tau  +1, \quad t\in \Real_+.
$$
Therefore, by uniqueness of the solution, it follows that $q_n(t/n)= q_1(t)$.

\subsection{Approximation of $\mathbf{S_{j,n}(z)}$}

Let us  rewrite the definition of $S_{j,n}(z)$ in (2.22)
as
\begin{align}\label{Sjnu}
S_{j,n}(z) =\, & z^j+ \frac {\sin (\pi d) }{ \pi }
\int_0^\infty 
    \frac{h(r)e^{-n r}}{z e^{r }-1}  u_{j,n}(r)   dr = \\
&
\nonumber
z^j+ 
\frac {\sin (\pi d) }{ \pi n } \frac{1}{z  -1}\int_0^\infty q_1(\tau) e^{-\tau}  d\tau
+ \frac {\sin (\pi d) }{ \pi }\Big(J_1(n)  + J_2(n) + J_3(n)\Big),
\end{align}
where we defined 
\begin{equation}\label{Jbnd}
\begin{aligned}
J_1(n) := & 
\int_0^\infty 
\frac{h(r) e^{-n r}}{z e^{r }-1} q_n(r)  dr -   n^{-1}\frac{1}{z  -1}\int_0^\infty q_1(\tau) e^{-\tau}  d\tau, 
\\
J_2(n) := &  
\int_0^\infty 
    \frac{h(r)e^{-n r}}{z e^{r }-1} (e^{jt}  -1)   dr,   \\
J_3(n) := &  \int_0^\infty 
    \frac{h(r)e^{-n r} }{z e^{r }-1} \big(u_{j,n}(t) -(e^{jt}  -1)-q_n(t)\big)  dr.   
\end{aligned}
\end{equation}
Our goal is to  show that each one of the  quantities in \eqref{Jbnd} is of order $O(n^{-2})$ as $n\to\infty$.
To  this end, we will need several estimates.

\subsubsection{Auxiliary estimates}

We start with an important implication of Lemma \ref{lem:An}.

\begin{cor}\label{lem:uj}
The solutions to equations \eqref{uweq}, guaranteed by Lemma \ref{lem:eqs}, 
satisfy the norm estimates 
\begin{align*}
&
\left(\int_0^\infty \big(u_{j,n}(t)-e^{jt}\big)^2dt\right)^{1/2} \le Cn^{-1/2},
\\
&
\left(\int_0^\infty \big(w_{j,n}(t)-e^{jt}\big)^2dt\right)^{1/2} \le Cn^{-1/2},
\end{align*}
for some constant $C$.
\end{cor}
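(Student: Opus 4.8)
The plan is to reduce the estimate to the contraction bound of Lemma~\ref{lem:An} applied to the forcing term. Subtracting $e^{jt}$ from the first equation in \eqref{uweq}, and using that $u_{j,n}-e^{j\cdot}\in L^2(\Real_+)$ while $A_n$ maps $L^2(\Real_+)$ into itself, one arrives at the identity
\[
(I-A_n)\big(u_{j,n}-e^{j\cdot}\big)=A_n e^{j\cdot},
\]
where $e^{j\cdot}$ denotes the function $t\mapsto e^{jt}$ and the right-hand side is understood via the defining integral \eqref{An}. For $n>n_0$ the operator $A_n$ has $L^2$-operator norm at most $1-\eps$ by Lemma~\ref{lem:An}, hence $I-A_n$ is boundedly invertible with $\|(I-A_n)^{-1}\|\le 1/\eps$. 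Consequently the whole claim follows once
\[
\big\|A_n e^{j\cdot}\big\|_{L^2(\Real_+)}\le C\,n^{-1/2}
\]
is established, with $C$ uniform in $0\le j\le q+1$. The same reasoning handles $w_{j,n}$: since $-A_n$ is also a contraction with norm $\le 1-\eps$, one has $(I+A_n)\big(w_{j,n}-e^{j\cdot}\big)=-A_n e^{j\cdot}$ and $\|(I+A_n)^{-1}\|\le 1/\eps$.

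To bound $A_n e^{j\cdot}$, I would first note it is well defined: by \eqref{suphe} one has $|h(r)e^{-n_0 r}|<1+\delta$ for all $r\ge 0$, hence $|h(r)e^{-nr}e^{jr}|\le(1+\delta)e^{-mr}$ with $m:=n-n_0-j$, which is positive for $n$ large and, since $j\le q+1$ is fixed, satisfies $m\ge n/2$ for all $n$ sufficiently large. Using in addition the elementary bound $e^{r+t}-1\ge r+t$, this gives, for each $t>0$,
\[
\big|(A_n e^{j\cdot})(t)\big|\le C\int_0^\infty\frac{e^{-mr}}{r+t}\,dr=C\,e^{mt}E_1(mt),
\]
where $E_1(x)=\int_x^\infty u^{-1}e^{-u}\,du$ is the exponential integral and $C$ is independent of $n$ and $j$. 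Squaring, integrating over $t\in\Real_+$, and substituting $s=mt$ yields
\[
\big\|A_n e^{j\cdot}\big\|^2\le\frac{C^2}{m}\int_0^\infty\big(e^sE_1(s)\big)^2\,ds,
\]
and the last integral is a finite constant, since $e^sE_1(s)=O(\log(1/s))$ as $s\to 0$ (hence square integrable near the origin) and $e^sE_1(s)\le 1/s$ as $s\to\infty$. Together with $m\ge n/2$ this gives the desired bound $\|A_n e^{j\cdot}\|\le C n^{-1/2}$, whence $\|u_{j,n}-e^{j\cdot}\|\le\eps^{-1}\|A_n e^{j\cdot}\|\le Cn^{-1/2}$, and the same argument applies to $w_{j,n}$.

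The argument is essentially routine, so I do not anticipate a real obstacle. The two points deserving a little care are verifying that the factor $e^{-nr}$ dominates the growth $e^{jr}$ of the forcing term (so that $A_n e^{j\cdot}$ is a genuine element of $L^2(\Real_+)$ for $n$ large, the mild growth of $h$ being already absorbed into the uniform bound \eqref{suphe}), and extracting the precise $n^{-1/2}$ scaling through the change of variables $s=mt$ in the exponential-integral estimate.
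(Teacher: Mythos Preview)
Your proof is correct and follows essentially the same route as the paper: rewrite the equation as $(I\mp A_n)(u_{j,n}-e^{j\cdot})=\pm A_ne^{j\cdot}$, use the contraction bound of Lemma~\ref{lem:An} to invert $I\mp A_n$, and reduce the claim to $\|A_ne^{j\cdot}\|=O(n^{-1/2})$. The only difference is in how that last estimate is carried out---the paper applies Minkowski's integral inequality and then computes $\bigl(\int_0^\infty(r+t)^{-2}dt\bigr)^{1/2}=r^{-1/2}$ to arrive at $\int_0^\infty e^{-mr}r^{-1/2}dr\asymp m^{-1/2}$, whereas you bound pointwise by the exponential integral and rescale; both are routine and yield the same $n^{-1/2}$.
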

\begin{proof}
The solutions $u_{j,n}$ to the first series of equations in \eqref{uweq} satisfy
$$
(u_{j,n}-\phi_j) = A_n (u_{j,n}-\phi_j) + A_n\phi_j
$$
where $A_n$ is the operator from  \eqref{An} and we defined $\phi_j(t):=e^{jt}$. 
The free term satisfies
\begin{align*}
\|A_n\phi_j\| = & 
\left(\int_0^\infty \left(\frac {\sin (\pi d)} \pi \int_0^\infty \frac{ h(r)e^{-n r}}{ e^{r+t}-1}\phi_j(r) dr\right)^2 dt\right)^{1/2} \le \\
&
\int_0^\infty \left(  \int_0^\infty \left(\frac{ h(r)e^{-n r}}{ e^{r+t}-1}\phi_j(r)\right)^2  dt\right)^{1/2}dr \le  \\
&
C_1 \int_0^\infty e^{-(n-n_0) r}\phi_j(r)\left(  \int_0^\infty  \frac{ 1}{ (t+r)^2}   dt\right)^{1/2}dr\le  \\
&
C_2 \int_0^\infty e^{-n r}r^{-1/2}dr\le C_3 n^{-1/2},
\end{align*}
where we applied the integral Minkowski inequality and used estimate  \eqref{suphe}.
The claim now follows from \eqref{contr}:
$$
\|u_{j,n}-\phi_j\| \le \eps^{-1}\|A_n \phi_j\| \le C n^{-1/2}.
$$
The estimates for the second series of equations in \eqref{uweq} are derived by the same argument.
\end{proof}

\begin{lem}
There exists a constant $C$ such that 
\begin{equation}\label{ubnd}
\big|u_{j,n}(t) -(e^{jt}  -1)-q_n(t)\big| \le 
 Cn^{-1}  q_n(t),\quad \forall t\in \Real_+,
\end{equation}
for all sufficiently large $n$.
\end{lem}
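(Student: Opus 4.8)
The plan is to recast the error $v_{j,n}(t):=u_{j,n}(t)-(e^{jt}-1)-q_n(t)$ as the solution of a fixed-point equation governed by $B_n$ — not $A_n$ — with a source term that is uniformly $O(n^{-1})$, and then to exploit that, by its very definition \eqref{qneq}, $q_n=\sum_{k\ge0}B_n^k\mathbf{1}$, where $\mathbf{1}$ is the constant function $1$ on $\Real_+$. Subtracting the defining identities $u_{j,n}=e^{j\cdot}+A_nu_{j,n}$ (first line of \eqref{uweq}) and $q_n=\mathbf{1}+B_nq_n$ (equation \eqref{qneq}), and substituting $u_{j,n}-q_n=v_{j,n}+(e^{j\cdot}-1)$, yields
$$
v_{j,n}=B_nv_{j,n}+\rho_n,\qquad \rho_n:=(A_n-B_n)u_{j,n}+B_n\big(e^{j\cdot}-1\big).
$$

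First I would establish the pointwise bound $|\rho_n(t)|\le Cn^{-1}(1+t)^{-1}$ for all large $n$. For $B_n(e^{j\cdot}-1)$ this is immediate from $|e^{js}-1|\le js\,e^{js}$, $\frac{1}{s+t}\le\frac1s$, and $e^{js}e^{-ns}\le e^{-(n-j)s}$. For $(A_n-B_n)u_{j,n}$ one writes the difference of kernels as
$$
\frac{h(s)}{e^{s+t}-1}-\frac{1}{s+t}=\frac{(s+t)\big(h(s)-1\big)-\big(e^{s+t}-1-(s+t)\big)}{(s+t)\,(e^{s+t}-1)},
$$
and controls the numerator by $h(s)=1+o(s)$ as $s\to0$ (with $h$ differentiable, so $|h(s)-1|\le Cs$ near $0$) together with the exponential decay of $h(s)e^{-ns}$ from \eqref{suphe}, and the denominator by $e^{s+t}-1\ge s+t$ and $0\le e^{s+t}-1-(s+t)\le\tfrac12(s+t)^2e^{s+t}$; the extra factor $n^{-1}$ comes from $e^{-ns}$, and Corollary \ref{lem:uj} ($\|u_{j,n}-e^{j\cdot}\|=O(n^{-1/2})$) absorbs the non-$e^{j\cdot}$ part of $u_{j,n}$. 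In particular $|\rho_n|\le Cn^{-1}\mathbf{1}$ and $\rho_n\in L^2(\Real_+)$.

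Finally, since $B_n$ is a contraction on $L^2(\Real_+)$ (established at the start of Appendix \ref{app:B}), $v_{j,n}=\sum_{k\ge0}B_n^k\rho_n$ in $L^2$. For $d\in(0,\tfrac12)$ the kernel $\tfrac{\sin(\pi d)}{\pi}\tfrac{e^{-ns}}{s+t}$ of $B_n$ is nonnegative, hence $B_n$ preserves nonnegativity and $|B_n^k\rho_n|\le B_n^k|\rho_n|\le Cn^{-1}B_n^k\mathbf{1}$ for every $k$; summing and using monotone convergence,
$$
|v_{j,n}|\le\sum_{k\ge0}B_n^k|\rho_n|\le Cn^{-1}\sum_{k\ge0}B_n^k\mathbf{1}=Cn^{-1}q_n,
$$
which, by continuity of $v_{j,n}$ and $q_n$ on $\Real_+$, is the asserted estimate. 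The complementary case $d\in(-\tfrac12,0)$ is handled analogously, with the sign changes indicated elsewhere in the paper.

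The step I expect to be the main obstacle is the \emph{uniform} $O(n^{-1})$ bound on $\rho_n$, specifically the piece $(A_n-B_n)u_{j,n}$: near $t=0$ both $A_nu_{j,n}$ and $B_nq_n$ are unbounded (indeed $q_n(t)\to\infty$ as $t\to0$), so the bound cannot be obtained by estimating the two terms separately and one must use the genuine cancellation in the kernel difference — which is precisely where the fine asymptotics $h(s)=1+o(s)$ from Lemma \ref{lem:tildeh} and the $L^2$ control of $u_{j,n}-e^{j\cdot}$ enter. The conceptual point that makes everything fall out cleanly is to view $v_{j,n}$ as a fixed point of $B_n$ rather than of $A_n$, so that the resolvent $\sum_k B_n^k$ applied to a constant reproduces $q_n$ itself.
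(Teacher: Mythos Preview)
Your approach is correct and essentially identical to the paper's. Your source term $\rho_n=(A_n-B_n)u_{j,n}+B_n(e^{j\cdot}-1)$ is precisely the paper's $\psi+\phi$ (equations \eqref{psidef}--\eqref{phieq}), your uniform bound $|\rho_n|\le Cn^{-1}$ is the paper's estimate \eqref{phipsibnd}, and your Neumann-series/positivity argument $|v_{j,n}|\le\sum_k B_n^k|\rho_n|\le Cn^{-1}\sum_k B_n^k\mathbf{1}=Cn^{-1}q_n$ is exactly the paper's resolvent bound via $R_n=\sum_{j\ge1}B_n^j$; the only cosmetic difference is that the paper writes the fixed-point equation for $u_{j,n}-(e^{j\cdot}-1)$ and then subtracts $q_n=1+R_n\mathbf{1}$, whereas you write it directly for the difference $v_{j,n}$.
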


\begin{proof}
The first series of equations in \eqref{uweq} can be rewritten as
\begin{align}
\label{equj}
u_{j,n}(t) -(e^{jt}  -1) = &
 \frac {\sin (\pi d) }{ \pi }
\int_0^\infty 
    \frac{h(r)e^{-n r}}{e^{r+t}-1}   u_{j,n}(r)  dr +   1 =
\\
& \nonumber
\frac {\sin (\pi d) }{ \pi }
\int_0^\infty \frac{e^{-n r}}{r+t}  \big( u_{j,n}(r) -(e^{jt}  -1)\big) dr + 1+ \phi(t)    + \psi(t),
\end{align}
where we defined 
\begin{align}
\label{psidef}
\psi(t): = &
 \frac {\sin (\pi d) }{ \pi }
\int_0^\infty 
\Big(
    \frac{h(r)}{e^{r+t}-1} -\frac 1 {r+t}\Big)e^{-n r}  u_{j,n}(r)  dr,
\\
\label{phieq}
\phi(t) := &
\frac {\sin (\pi d) }{ \pi }
\int_0^\infty \frac{e^{-n r}}{r+t}  \big(e^{jr}-1\big)    dr.
\end{align}
 Let us first check that these functions satisfy the bound 
\begin{equation}\label{phipsibnd}
\|\psi\|_\infty \vee \|\phi\|_\infty  \le C_1 n^{-1},
\end{equation}
for some constant $C_1$.
For $\phi$ from \eqref{phieq}, this bound is verified directly:    
$$
|\phi(t)| \le 
\int_0^\infty \frac{e^{-n r}}{r}  (e^{jr}-1)    dr \le 
j \int_0^\infty  e^{-(n-j) r}        dr  \le C_2 n^{-1},
$$
where  $C_2$ is some constant.
 To check \eqref{phipsibnd} for $\psi$, note that,  
by Lemma 5.1, the function $h(t)=\widetilde h(e^t)$ satisfies  $h(t)=1+ O(t)$ as $t\to 0$  and  
$
\sup_{t\ge 0}e^{-\alpha t} |h(t)|<\infty
$
with $\alpha :=  (2q-p)\vee 0$.
Consequently the kernel 
$$
L(r,t):= \Big(\frac{h(r)}{e^{r+t}-1} -\frac 1 {r+t}\Big)e^{-\alpha r}, \quad r,t\in \Real_+,
$$
is uniformly bounded:
$$
|L(r,t)| \le \sup_{r\ge 0}|h(r)|e^{-\alpha r} \sup_{x>0}\Big(\frac {1} {x}-\frac{1}{e^{x}-1} \Big)
+
 \sup_{r\ge 0}\frac {|h(r)-1|} {r} e^{-\alpha r} <\infty.
$$
Therefore, for all $n$ large enough, the function in \eqref{psidef} satisfies 
$$
|\psi(t)| \le  
\|L\|_\infty\int_0^\infty    e^{-(n-\alpha) r}  \big|u_{j,n}(r)-e^{jr}\big|  dr + 
\|L\|_\infty \int_0^\infty    e^{-(n-\alpha-j) r}       dr \le C_3 n^{-1}
$$
for some constant $C_3$. The latter bound is obtained by applying the Cauchy-Schwarz inequality  to the first integral and by using the bound from Lemma \ref{lem:uj}. This verifies \eqref{phipsibnd}.

A calculation as in the proof of Lemma \ref{lem:uj} shows that
$B_n (1+\phi +\psi)\in L^2(\Real_+)$.
Thus, in view of estimate \eqref{Bneq}, the Neumann series for \eqref{equj} yields  
\begin{equation}\label{ujnu}
\begin{aligned}
u_{j,n}(t) -(e^{jt}  -1) = &  1+ \phi(t)    + \psi(t) +  R_n(1+\phi+\psi) (t) =\\
& 
1+ (R_n 1)(t) + \phi(t)    + \psi(t) +  R_n (\phi+\psi)(t),
\end{aligned}
\end{equation}
where 
$
R_n = \sum_{j=1}^\infty B_n^j
$
is the resolvent operator.
The last term satisfies  
$$
\big| R_n(\phi+\psi) (t)\big|  \le 
  R_n(|\phi|+|\psi|) (t)   \le  2C_1n^{-1} (R_n 1)(t) = 2C_1n^{-1} (q_n(t)-1),
$$
where the first inequality holds since $R_n$ is a series of integral operators with nonnegative kernels,
the second bound is due to \eqref{phipsibnd} and the equality holds since the function $q_n := 1+R_n1$ is the solution to equation \eqref{qneq}.
Substituting this bound along with \eqref{phipsibnd} into \eqref{ujnu} we get \eqref{ubnd}:
$$
\begin{aligned}
\big|u_{j,n}(t) -(e^{jt}  -1)-q_n(t)\big| = &   \big|\phi(t)    + \psi(t) +  R_n (\phi+\psi) (t)\big|\le\, \\  
&  2C_1n^{-1} + 2C_1n^{-1} (q_n(t)-1)= 
2C_1n^{-1}  q_n(t).
\end{aligned}  
$$
\end{proof}

\subsubsection{Bounds for expressions in \eqref{Jbnd}}\label{ssec:B11}
Due to the bound in \eqref{ubnd}, the last integral in \eqref{Jbnd} satisfies
\begin{align*}
|J_3(n)| \le\, &
 2Cn^{-1}\int_0^\infty 
    \left|\frac{h(r)}{z e^{r }-1} \right|   |q_n(r)| e^{-n r}  dr = \\
&    
2Cn^{-2}\int_0^\infty 
    \left|\frac{h(\tau/n)}{z e^{\tau/n }-1}\right|    |q_1(\tau)| e^{-\tau }  d\tau  =  \\
&
2Cn^{-2}\left|\frac{1}{z  -1}\right| \int_0^\infty 
       |q_1(\tau)| e^{-\tau }  d\tau (1+o(1)), \quad n \to \infty.    
\end{align*}
The second integral admits the estimate 
\begin{align*}
|J_2(n)|\le\, &
\int_0^\infty 
    \frac{|h(r)|}{|z e^{r }-1|} (e^{jr}  -1) e^{-n r}  dr= \\
&    
n^{-1}  \int_0^\infty 
    \frac{|h(\tau/n)|}{|z e^{\tau/n }-1|} (e^{j\tau/n}  -1) e^{-  \tau }  d\tau \le \\
&
n^{-2}  j \int_0^\infty 
    \frac{|h(\tau/n)|}{|z e^{\tau/n }-1|}  \tau e^{j\tau/n}  e^{-  \tau }  d\tau   
    = \\
&    
    n^{-2}
     \frac{j}{|z  -1|} \int_0^\infty 
      \tau    e^{-  \tau }  d\tau (1+o(1)), \quad n \to \infty.      
\end{align*}
Finally, 
\begin{align*}
&
n^2 |J_1(n)|  =  
n\left|
\int_0^\infty 
\left(
 \frac{h(\tau/n)}{z e^{\tau/n }-1}    -    \frac{1}{z  -1}  \right) q_1(\tau)e^{-\tau}  d\tau
\right|  \le \\
&
\int_0^\infty 
\left|\frac{n\big(h(\tau/n)-1\big)}{z e^{\tau/n }-1}  \right|   |q_1(\tau)|e^{-\tau}  d\tau
+ 
|z| \int_0^\infty 
\left| \frac{n(1 - e^{\tau/n }) }{(z e^{\tau/n }-1)(z-1)}    \right| |q_1(\tau)| e^{-\tau}  d\tau \\
&  \xrightarrow[n\to \infty]{} 
\frac{C_4}{|z  -1|}   
\int_0^\infty 
 \tau |q_1(\tau)|e^{-\tau}  d\tau
+ 
\frac{|z| }{ |z-1|^2} \int_0^\infty 
  \tau |q_1(\tau)| e^{-\tau}  d\tau.
\end{align*}
Plugging these estimates into \eqref{Sjnu} we obtain the asymptotic approximation for $S_{j,n}(z)$ in (6.1):
\begin{equation}\label{prove1-S}
S_{j,n}(z)  =\,   z^j+ \frac {\sin (\pi d) }{ \pi  } \frac{\lambda_0}{z  -1} n^{-1}+ O(n^{-2}), \quad \text{as}\ n \to \infty.
\end{equation}

\subsection{Approximation of $\mathbf{D_{j,n}(z)}$ }

The  approximation for $D_{j,n}(z)$ is obtained similarly. Let us write, cf. \eqref{uweq}, 
$$
w_{j,n}(t) =  
-\frac {\sin (\pi d) }{ \pi }\int_0^\infty \frac{e^{-n r}}{ r+t } \big( w_{j,n}(r) - (e^{jt}-1)\big)  dr 
+ \phi(t) + \psi(t) +  e^{jt},
$$
where 
\begin{align*}
\psi(t) := &
-\frac {\sin (\pi d) }{ \pi }\int_0^\infty \Big(\frac{h(r) e^{-n r}}{e^{r+t}-1}-\frac{e^{-n r}}{ r+t }\Big)  w_{j,n}(r)  dr,
\\
\phi(t) := & -\frac {\sin (\pi d) }{ \pi }\int_0^\infty \frac{e^{-n r}}{ r+t }     (e^{jt}-1)   dr.
\end{align*}
The estimates \eqref{phipsibnd} remain valid for these functions as well by the very same arguments.
We can rewrite 
$$
w_{j,n}(t) -(e^{jt}-1)= 
-\frac {\sin (\pi d) }{ \pi }\int_0^\infty \frac{e^{-n r}}{ r+t } \big( w_{j,n}(r) - (e^{jt}-1)\big)  dr 
+1 + \phi(t) + \psi(t)
$$
and as before  
\begin{align*}
w_{j,n}(t) -(e^{jt}-1) =\, & 1 + \phi(t) + \psi(t) +  \widetilde R_n(1+\phi+\psi) (t)= \\
&
1 + (\widetilde R_n1)(t)+ \phi(t) + \psi(t) +  \widetilde R_n(\phi+\psi) (t),
\end{align*}
where $\widetilde R_n = \sum_{j=1}^\infty (-B_n)^j$. 
Note that, for any $f\in L^2(\Real_+)$,
$$
|\widetilde R_nf| =  \left|\sum_{j=1}^\infty (-B_n )^jf\right| \le \sum_{j=1}^\infty  B_n^j|f| =  R_n|f| (t),
$$
and hence 
$$
\big| \widetilde R_n(\phi+\psi) (t)\big| \le  R_n(|\phi|+|\psi|)  (t) \le 2Cn^{-1} (R_n1)(t).
$$
Thus we obtain the bound analogous to \eqref{ubnd}: 
\begin{equation}\label{abnd}
\begin{aligned}
&
|w_{j,n}(t) -(e^{jt}-1) - p_n(t)| \le 
 \big|\phi(t)\big| + \big|\psi(t)\big| + \big| \widetilde R_n(\phi+\psi) (t)\big| \le \\
 &
2Cn^{-1}  + 2Cn^{-1} (R_n1)(t) =  
 2Cn^{-1} + 2Cn^{-1} (q_n(t)-1) \le Cn^{-1} q_n(t),
\end{aligned}
\end{equation}
where the function $p_n := 1+\widetilde R_n1$ is the unique solution to the integral equation 
$$
p_n = -B_n p_n +1.
$$
Now we can decompose $D_{j,n}(z)$ similarly to  \eqref{Sjnu} and estimate each of the obtained terms as in subsection \ref{ssec:B11} using \eqref{abnd}.
This yields the approximation for $D_{j,n}(z)$ in (6.1):  
\begin{equation}\label{prove1-D}
D_{j,n}(z) =\,   z^j  
-\frac {\sin (\pi d) }{ \pi   }\frac{\mu_0}{z-1} n^{-1}  +  O(n^{-2}), \qquad \text{as}\ n \to \infty.
\end{equation}

\section{Proof of Theorem \ref{thm:const}
}\label{app:C}
The constants in (6.2):
$$%\label{lamu}
\lambda_0 = \int_0^\infty  q_1 (\tau) e^{-  \tau}  d\tau, \quad 
\mu_0 = \int_0^\infty  p_1(\tau) e^{-  \tau}  d\tau,  
$$
are computed by leveraging a non-obvious connection between equations \eqref{q1p1}  
and the following integral equations on the unit interval:
\begin{equation}\label{C1}
\int_0^1 u(y) |x-y|^{-\alpha} \sign(x-y) dy =1, \quad x\in (0,1),
\end{equation}
and 
\begin{equation}\label{C2}
\int_0^1 u(y) |x-y|^{-\alpha} dy = 1, \quad x\in (0,1),
\end{equation}
with $\alpha \in (0,1)$.
Closed-form solutions to these equations are available due to \cite{LC67} and they can be used to study the properties of solutions to \eqref{q1p1}. 
In particular, this will allow us to find the exact values in Theorem \ref{thm:const}.

We provide a detailed account of the computation for the case $d\in (0,\frac 1 2)$, demonstrating how the value of $\lambda_0$
in \eqref{values} is derived. To this end, we will use equation \eqref{C1} with $\alpha:= 2d$. 
The other cases can be treated similarly with minor adjustments. In particular, the value of $\mu_0$ in (6.4) is obtained by analyzing equation \eqref{C2} with  $\alpha := 1-2d$. 
The same values for the constants are also obtained for  $d\in (-\frac 1 2, 0)$ by switching the roles of the equations in \eqref{q1p1}.

\subsection{The solution to \eqref{C1}.}
As shown in \cite{LC67} the general solution to equation 
$$
\int_0^1 u(y) |x-y|^{-\alpha} \sign(x-y) dy =f(x), \quad x\in (0,1),
$$
has the form 
$
u =c u_0+u_1
$
with $c\in \Real$, where 
$$
u_0(x)= x^{-1+\alpha/2} (1-x)^{-1+\alpha/2},
$$
and
$$
u_1(x)=
\frac 1 {h(\alpha) \Gamma(\alpha/2)^2} 
\frac d{dx} x^{\alpha/2}\int_x^1 t^{-\alpha} (t-x)^{-1+\alpha/2}dt 
\int_0^t s^{\alpha/2}(t-s)^{-1+\alpha/2}f(s) ds,
$$
with the constant  
$$
h(\alpha) = 2 \sin \big(\tfrac {1-\alpha} 2\pi\big)\Gamma(1-\alpha).
$$ 
For the particular free term $f(z)=1$ in \eqref{C1}  a direct calculation reduces the latter expression  to 
\begin{equation}\label{g1}
u_1(x)= c(\alpha) x^{\alpha/2-1}(1-x)^{\alpha/2-1}(1-2x)
\end{equation}
with the constant 
$$
c(\alpha)= \frac{B(\frac\alpha 2+1, \frac\alpha 2) }{ h(\alpha)\Gamma(\frac\alpha 2)^2}.
$$
For our purposes, it will be convenient to use a solution to \eqref{C1}, antisymmetric around $\frac 1 2$, i.e., such that $u(x)=-u(1-x)$.
Since $u_0$ is symmetric and $u_1$ is antisymmetric, such solution is unique, corresponding to $c=0$, i.e. $u(x)= u_1(x)$.

\subsection{The solution to \eqref{q1p1}.}
In this subsection, we  consider the equation
\begin{equation}\label{qalpha}
q(t) = \frac{\sin (\pi \alpha/2) }{\pi} \int_0^\infty \frac{e^{-\tau}}{\tau+t}q(\tau)d\tau +1, \quad t\in \Real_+.
\end{equation}
For $\alpha := 2d$, this is the first equation in \eqref{q1p1}. 
As discussed above, cf. \eqref{qneq}, this equation has the unique solution such that $q(\cdot)-1\in L^2(\Real_+)$. 
We will express this solution in terms  of the solution to \eqref{C1} and, thus, 
will be able to study its properties using the explicit formula \eqref{g1}. 

The construction essentially follows the same approach that we applied to the prediction problem in this paper. We will argue that the Laplace transform of the solution to \eqref{C1} solves a specific Hilbert boundary value problem. We will then show that this problem has a unique solution and relate it to the unique solution of \eqref{qalpha}.
Finally, we will use this relation to derive the exact value of the constant in question.

\medskip

\subsubsection{The Laplace transform}
The following lemma provides the key representation formula for the Laplace transform of the solution to \eqref{C1}.
\begin{lem}\label{lem:Lap}
The Laplace transform of the antisymmetric solution to \eqref{C1}:
\begin{equation}\label{ULap}
U(z) := \int_0^1 u(x) e^{-zx}dx
\end{equation}
satisfies the representation 
\begin{equation}\label{Uz}
z U(z)=\frac{\Psi(z)  -e^{-z} \Psi(-z)}{\Lambda(z)}, \quad z\in \mathbb C,
\end{equation}
where 
\begin{equation}\label{PsiL}
\Psi(z) = -\Gamma(\alpha) + z \int_0^\infty \frac{t^{\alpha-1}}{t-z} U(t)dt, \quad z\in \mathbb C\setminus \Real_+,
\end{equation}
and
$$
\Lambda(z) = \frac \pi{\sin (\pi \alpha)} \Big((-z)^{\alpha-1}-z^{\alpha-1}\Big), \quad z\in \mathbb C\setminus \Real,
$$
with the principle branch $\arg(z)\in (-\pi,\pi]$.
\end{lem}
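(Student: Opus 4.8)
The proof rests on unveiling the convolution structure behind \eqref{C1}. I would extend $u$ by zero to all of $\Real$ and set
$$
\psi(x):=\int_0^1 u(y)\,|x-y|^{-\alpha}\sign(x-y)\,dy=(k_\alpha\ast u)(x),\qquad k_\alpha(t):=|t|^{-\alpha}\sign(t),\quad x\in\Real.
$$
By \eqref{C1}, $\psi\equiv 1$ on $(0,1)$, while $\psi(x)=\int_0^1 u(y)(x-y)^{-\alpha}dy$ for $x>1$ and $\psi(x)=-\int_0^1 u(y)(y-x)^{-\alpha}dy$ for $x<0$; the antisymmetry $u(x)=-u(1-x)$ forces $\int_0^1 u=0$, so both tails decay like $|x|^{-\alpha-1}$. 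Hence $F(z):=\int_0^\infty\psi(x)e^{-zx}dx$ is holomorphic for $\Re z>0$ and, splitting at $x=1$, $F(z)=\tfrac{1-e^{-z}}{z}+\widehat\psi_1(z)$ with $\widehat\psi_1(z):=\int_1^\infty\psi(x)e^{-zx}dx$. The plan is to compute $F$ a second way, express $\widehat\psi_1$ through $\Psi$, and then read off \eqref{Uz}.

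First I would evaluate $F$ by Fubini: $F(z)=\int_0^1 u(y)\big(\int_0^\infty k_\alpha(x-y)e^{-zx}dx\big)dy$. Splitting the inner integral at $x=y$ and using $\int_0^\infty s^{-\alpha}e^{-zs}ds=\Gamma(1-\alpha)z^{\alpha-1}$ ($\Re z>0$) together with $\int_0^y s^{-\alpha}e^{zs}ds=(-z)^{\alpha-1}\big(\Gamma(1-\alpha)-\Gamma(1-\alpha,-zy)\big)$ — where $\Gamma(a,w):=\int_w^\infty t^{a-1}e^{-t}dt$, the branches of $(\pm z)^{\alpha-1}$ are the principal ones, and the second identity is verified for $\Re z<0$ (where it is elementary) and then continued analytically — one obtains
$$
F(z)=\Gamma(1-\alpha)\big(z^{\alpha-1}-(-z)^{\alpha-1}\big)U(z)+(-z)^{\alpha-1}\int_0^1 u(y)e^{-zy}\Gamma(1-\alpha,-zy)\,dy.
$$
Since $\Gamma(\alpha)\Gamma(1-\alpha)=\pi/\sin(\pi\alpha)$, the coefficient of $U(z)$ is exactly $-\Lambda(z)/\Gamma(\alpha)$.

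Next I would identify the remaining integral with $\Psi$. Substituting \eqref{ULap} into \eqref{PsiL}, interchanging the order of integration, and applying the classical formula $\int_0^\infty\frac{t^{\alpha-1}e^{-tx}}{t-z}dt=\Gamma(\alpha)(-z)^{\alpha-1}e^{-zx}\Gamma(1-\alpha,-zx)$ (valid for $x>0$, $z\notin\Real_+$), I get
$$
\Psi(z)=\Gamma(\alpha)\Big(-1+z(-z)^{\alpha-1}\int_0^1 u(x)e^{-zx}\Gamma(1-\alpha,-zx)\,dx\Big),
$$
so the last term of $F(z)$ equals $(\Psi(z)+\Gamma(\alpha))/(z\Gamma(\alpha))$. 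In the same spirit, computing $\widehat\psi_1(z)=\int_0^1 u(y)\big(\int_1^\infty (x-y)^{-\alpha}e^{-zx}dx\big)dy$, making the change of variables $y\mapsto 1-y$ and invoking $u(1-y)=-u(y)$ turns it into a $\Gamma(1-\alpha,zy)$-integral of $u$, which the same formula with $z$ replaced by $-z$ identifies as $e^{-z}(\Psi(-z)+\Gamma(\alpha))/(z\Gamma(\alpha))$. Equating the two expressions for $F(z)$, multiplying by $z\Gamma(\alpha)$ and cancelling the $\Gamma(\alpha)$-terms yields \eqref{Uz} on, say, the open first quadrant; since both sides of \eqref{Uz} are holomorphic in $\mathbb C\setminus\Real$, the identity theorem propagates it to the whole upper half-plane and, symmetrically, to the lower one.

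The hard part is the bookkeeping of the multivalued objects: tracking the principal branches of $z^{\alpha-1}$, $(-z)^{\alpha-1}$ and of the incomplete Gamma functions through the contour splittings, and checking that the branch-cut discontinuities of $\Psi$ and of $\Lambda$ across $\Real_+$ cancel inside the combinations above, consistently with $F$ being genuinely holomorphic for $\Re z>0$. The supporting analytic facts — absolute convergence of the double integrals needed for Fubini, and the $|x|^{-\alpha-1}$ decay of the tails of $\psi$ coming from the antisymmetry of $u$ — are routine and I would verify them in passing, relegating the explicit incomplete-Gamma evaluations to the body of the argument.
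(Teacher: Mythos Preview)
Your argument is correct, but it takes a noticeably different route from the paper's. The paper does not use incomplete Gamma functions at all: instead it inserts the subordination identity
\[
|x-y|^{-\alpha}=\frac{1}{\Gamma(\alpha)}\int_0^\infty t^{\alpha-1}e^{-t|x-y|}\,dt
\]
into \eqref{C1}, integrates $x$ only over $[0,1]$, and swaps the order of integration. The inner $x$-integral $\int_0^1 e^{-zx}e^{-t|x-y|}\sign(x-y)\,dx$ is then elementary, producing $\frac{1}{t-z}(U(t)-U(z))+\frac{1}{t+z}(U(z)+e^{-z}U(t))$ after integrating in $y$ and using antisymmetry; the definitions of $\Psi$ and $\Lambda$ as $t$-integrals drop out directly. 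Your approach instead integrates $x$ over all of $\Real_+$, which forces you to treat the tail $\widehat\psi_1$ separately and to evaluate Laplace transforms of $|t|^{-\alpha}\sign(t)$ via $\Gamma(1-\alpha,\cdot)$, invoking the formula $\int_0^\infty\frac{t^{\alpha-1}e^{-tx}}{t-z}\,dt=\Gamma(\alpha)(-z)^{\alpha-1}e^{-zx}\Gamma(1-\alpha,-zx)$ to close the loop. Both arrive at the same identity, but the paper's subordination trick is cleaner precisely because it sidesteps the branch-tracking you flag as ``the hard part'': the multivaluedness is confined to the single evaluation $\int_0^\infty\frac{t^{\alpha-1}}{t\mp z}\,dt$, rather than threaded through incomplete Gammas at every step. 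On the other hand, your computation makes the link between $\Psi$ and the classical exponential-integral/incomplete-Gamma representations explicit, which could be useful if one wanted finer asymptotics of $\Psi$ later.
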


\begin{proof}
Substitute the identity 
$$
|x-y|^{-\alpha}=\frac 1{\Gamma(\alpha)} \int_0^\infty t^{\alpha-1} e^{-t|x-y|}dt 
$$
into \eqref{C1} and apply the Laplace transform to its left hand side to obtain:
\begin{align*}
&
\int_0^1 e^{-zx}\left(
\int_0^1 u(y)\left(\frac 1{\Gamma(\alpha)} \int_0^\infty t^{\alpha-1} e^{-t|x-y|}dt\right) \sign(x-y) dy \right)dx
 =\\
&
\frac 1{\Gamma(\alpha)}\int_0^\infty t^{\alpha-1} \int_0^1 u(y) \left(
\int_0^1 e^{-zx} 
  e^{-t|x-y|} \sign(x-y)  dx
 \right)dydt =\\
& 
\frac 1{\Gamma(\alpha)}\int_0^\infty  \frac {t^{\alpha-1}} {t-z} \int_0^1 u(y)  \Big(e^{-ty}-e^{-zy}\Big)dydt + \\
&
\frac 1{\Gamma(\alpha)}\int_0^\infty  \frac {t^{\alpha-1}} {t+z} \int_0^1 u(y) \Big(e^{-yz}-e^{-z-t(1-y)}\Big)dydt  =\\
&
\frac 1{\Gamma(\alpha)}\int_0^\infty  \frac {t^{\alpha-1}} {t-z}
\Big(U(t)-U(z)\Big) dt + 
\frac 1{\Gamma(\alpha)}\int_0^\infty  \frac {t^{\alpha-1}} {t+z} 
\Big(U(z)+e^{-z} U (t)\Big)dt,
\end{align*}
where  the last equality is due to antisymmetry of $u(\cdot)$.
The Laplace transform of the right hand side of \eqref{C1} is $(1-e^{-z})/z$. Equating these two expressions and rearranging, we arrive at
\eqref{Uz}, with  $\Psi(z)$ defined in \eqref{PsiL} and
$$
\Lambda(z)= \int_0^\infty  \frac {t^{\alpha-1}} {t-z}  dt- \int_0^\infty  \frac {t^{\alpha-1}} {t+z} dt = 
\frac \pi{\sin (\pi\alpha )} \Big((-z)^{\alpha-1}-z^{\alpha-1}\Big).
$$
\end{proof}

The following lemma summarizes some relevant properties of  $\Lambda(z)$.

\begin{lem}
The function $\Lambda(z)$ is non-vanishing and sectionally holomorphic in $\mathbb C\setminus \Real$.
Its limits 
$$
\Lambda^\pm(t):=\lim_{z\to t^{\pm}}\Lambda(z), \quad t\in \Real\setminus\{0\},
$$
satisfy 
$
\Lambda^\pm(-t)=-\Lambda^\mp(t)
$
and  
\begin{equation}\label{Lpm}
\frac{\Lambda^+(t)}{\Lambda^-(t)} = e^{-\pi i \alpha }, \quad t\in \Real_+.
\end{equation}
\end{lem}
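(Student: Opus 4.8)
The plan is to read all four assertions off the closed form
$$\Lambda(z)=\frac{\pi}{\sin(\pi\alpha)}\big((-z)^{\alpha-1}-z^{\alpha-1}\big),$$
treating the two power functions separately and tracking the principal branch $\arg(\cdot)\in(-\pi,\pi]$ throughout. First I would note that $z^{\alpha-1}$ is holomorphic and non-vanishing on $\mathbb C\setminus(-\infty,0]$ and, via the substitution $w=-z$, that $(-z)^{\alpha-1}$ is holomorphic and non-vanishing on $\mathbb C\setminus[0,\infty)$; hence $\Lambda$ is holomorphic on the intersection $\mathbb C\setminus\Real$, and each term extends continuously to every $t\in\Real\setminus\{0\}$ from each half-plane (a jump occurs only across one of the two open half-lines for each term), so $\Lambda$ is sectionally holomorphic in $\mathbb C\setminus\Real$. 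For the non-vanishing I would compute the ratio $(-z)^{\alpha-1}/z^{\alpha-1}$ on each half-plane: writing $z=re^{i\theta}$ with $\theta\in(0,\pi)$ gives $-z=re^{i(\theta-\pi)}$ with $\theta-\pi\in(-\pi,0)$, both exponents in the principal range, so the ratio equals $e^{i(1-\alpha)\pi}\neq1$ (since $(1-\alpha)\pi\in(0,\pi)$); the lower half-plane is the identical computation with $\theta\in(-\pi,0)$ and ratio $e^{-i(1-\alpha)\pi}\neq1$. Thus $\Lambda$ never vanishes off $\Real$.

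The antisymmetry is obtained from the functional identity $\Lambda(-z)=-\Lambda(z)$, which is immediate from the formula since negating $z$ interchanges the two terms. Because approaching $-t$ from the upper half-plane along $z$ is the same as approaching $t$ from the lower half-plane along $-z$, this identity yields $\Lambda^+(-t)=-\Lambda^-(t)$, and likewise $\Lambda^-(-t)=-\Lambda^+(t)$, i.e. $\Lambda^\pm(-t)=-\Lambda^\mp(t)$ for $t\in\Real\setminus\{0\}$.

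Finally, for the jump \eqref{Lpm} I would compute the one-sided limits at $t>0$. As $z\to t^+$ one has $z^{\alpha-1}\to t^{\alpha-1}$, while $-z$ approaches $-t$ from below, so that $\arg(-z)\to-\pi$ and $(-z)^{\alpha-1}\to t^{\alpha-1}e^{i(1-\alpha)\pi}$; as $z\to t^-$ instead $\arg(-z)\to\pi$ and $(-z)^{\alpha-1}\to t^{\alpha-1}e^{-i(1-\alpha)\pi}$. Hence, with $\beta:=(1-\alpha)\pi$,
$$\frac{\Lambda^+(t)}{\Lambda^-(t)}=\frac{e^{i\beta}-1}{e^{-i\beta}-1}=-e^{i\beta}=-e^{i(1-\alpha)\pi}=e^{-i\pi\alpha},$$
using the elementary identity $e^{-i\beta}-1=-e^{-i\beta}(e^{i\beta}-1)$. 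There is no genuine obstacle: the whole lemma is elementary branch bookkeeping, and the only step demanding care is fixing the signs of $\arg(-z)$ on the two sides of the cut $\Real_+$, which I would re-verify against a sample point $z=t\pm i\eps$ before writing up the one-sided limits.
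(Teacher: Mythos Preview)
Your proposal is correct and follows essentially the same route as the paper: both read all the claims directly off the closed form for $\Lambda$ in polar coordinates, use the functional identity $\Lambda(-z)=-\Lambda(z)$ for the antisymmetry of the limits, and compute the ratio $\Lambda^+(t)/\Lambda^-(t)$ by tracking the principal argument of $-z$ as $z\to t^\pm$. The only cosmetic difference is that for non-vanishing the paper writes out $\Lambda(z)$ explicitly as a nonzero multiple of $\sin(\theta(1-\alpha))$, whereas you argue via the ratio $(-z)^{\alpha-1}/z^{\alpha-1}=e^{\pm i(1-\alpha)\pi}\neq 1$; these are equivalent one-line checks.
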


\begin{proof}
Let $z=r e^{i\theta}$ with $r\in \Real_+$ and $\theta \in (0,\pi)$, then 
$$
\Lambda(z) = \frac \pi{\sin (\pi\alpha )} r^{\alpha-1}\Big(e^{-i\theta(\alpha-1)}-e^{i\theta(\alpha-1)}\Big)=
 \frac {2\pi i}{\sin (\pi\alpha )} r^{\alpha-1}\sin(\theta(1-\alpha))\ne 0.
$$
Similarly, $\Lambda(z)\ne 0$ for $z = re^{i\theta}$ with $\theta\in (-\pi, 0)$. 
By definition, $\Lambda(-z)=-\Lambda(z)$, which implies $\Lambda^\pm(-t)=-\Lambda^\mp(t)$.
Identity \eqref{Lpm} follows by direct calculation:
$$
\frac{\Lambda^+(t)}{\Lambda^-(t)} =  \frac{
e^{-\pi i(\alpha-1)}t^{\alpha-1}-t^{\alpha-1}
}
{
e^{\pi i(\alpha-1)}t^{\alpha-1}-t^{\alpha-1}
} = e^{-\pi i\alpha }, \quad t\in \Real_+.
$$ 
 
\end{proof}

\subsubsection{The Hilbert problem}

The next lemma formulates the Hilbert problem to which the function $\Psi(z)$  from Lemma \ref{lem:Lap} is a particular solution.

\begin{lem}\label{lem:H}
The function $\Psi(\cdot)$ defined in \eqref{PsiL} is sectionally holomorphic in $\mathbb C\setminus \Real_+$. 
Its limits on $\Real_+$ satisfy the boundary condition 
\begin{equation}\label{Psibnd}
\Psi^+(t) -\frac{\Lambda^+(t)}{\Lambda^-(t)} \Psi^-(t) = -e^{-t} \Psi(-t) \Big(\frac{\Lambda^+(t)}{\Lambda^-(t)}-1\Big), \quad t\in \Real_+
\end{equation}
and the growth estimates  
\begin{equation}\label{Psiest}
\Psi(z) = \begin{cases}
-\Gamma(\alpha) + O(z), & z\to 0, \\
O(z^{\alpha/2}), & z\to \infty.
\end{cases}
\end{equation}
\end{lem}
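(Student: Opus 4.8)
The plan is to derive all three assertions from the integral representation \eqref{PsiL}, viewing
\[
\Phi(z):=\int_0^\infty \frac{t^{\alpha-1}}{t-z}\,U(t)\,dt,\qquad z\in\mathbb C\setminus\Real_+,
\]
as a Cauchy-type integral along $\Real_+$ with density $\rho(t):=t^{\alpha-1}U(t)$, and to obtain the boundary relation \eqref{Psibnd} by passing to the limit in the identity \eqref{Uz} already established in Lemma \ref{lem:Lap}.

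First I would record the endpoint behaviour of $\rho$. Since $u$ is supported in $[0,1]$, $U(z)=\int_0^1 u(x)e^{-zx}dx$ is entire, and antisymmetry of $u$ gives $U(0)=\int_0^1 u=0$, whence $|U(t)|\le t\int_0^1 x|u(x)|dx$ and so $\rho(t)=O(t^\alpha)$ as $t\downarrow 0$; in particular $\rho$ is H\"older continuous up to the left endpoint with $\rho(0)=0$. At the other end, substituting the explicit antisymmetric solution $u=u_1$ from \eqref{g1}, whose behaviour near $x=0$ is $u(x)=c(\alpha)x^{\alpha/2-1}\bigl(1+O(x)\bigr)$, Watson's lemma yields $U(t)\sim c(\alpha)\Gamma(\alpha/2)\,t^{-\alpha/2}$, hence $\rho(t)=O(t^{\alpha/2-1})$ as $t\to\infty$ (which also guarantees convergence of the integral in \eqref{PsiL}). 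With these estimates and local H\"older regularity of $\rho$ on $(0,\infty)$, the standard theory of Cauchy-type integrals \cite[Ch.~1]{Gahov} shows that $\Phi$ is holomorphic in $\mathbb C\setminus\Real_+$, has finite one-sided limits at every $t\in(0,\infty)$, and --- because $\rho$ vanishes at the left endpoint, killing the usual logarithmic term there --- remains bounded as $z\to0$. Consequently $\Psi=-\Gamma(\alpha)+z\Phi$ is sectionally holomorphic in $\mathbb C\setminus\Real_+$, and $\Psi(z)+\Gamma(\alpha)=z\Phi(z)=O(z)$, which is the first line of \eqref{Psiest}. For the behaviour as $z\to\infty$ I would estimate $\Phi(z)=\int_0^\infty\rho(t)(t-z)^{-1}dt$ directly, using $|\rho(t)|\lesssim\min(t^{\alpha-1},t^{\alpha/2-1})$ together with $|t-z|\gtrsim\max(t,|z|)$ away from $\Real_+$, to get $|\Phi(z)|\lesssim|z|^{\alpha/2-1}$ and thus $\Psi(z)=O(z^{\alpha/2})$; alternatively one feeds $U(z)=O(|z|^{-\alpha/2})$ (Watson's lemma on $\Re z\ge0$) and $\Lambda(z)=O(|z|^{\alpha-1})$ into \eqref{Uz}, noting $e^{-z}\Psi(-z)=O(e^{-\Re z}|z|^{\alpha/2})$, and treats $\Re z<0$ with the integral estimate.

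For the boundary condition \eqref{Psibnd} I would simply pass to the limit in \eqref{Uz}. Fix $t\in\Real_+$ and let $z\to t^{\pm}$: since $U$ is entire, since $-t<0$ lies off the cut so that $\Psi(-z)\to\Psi(-t)$ irrespective of the direction of approach, and since $\Lambda$ is sectionally holomorphic on $\mathbb C\setminus\Real$ with non-vanishing limits $\Lambda^{\pm}$, the identity \eqref{Uz} gives
\begin{align*}
\Psi^+(t)-e^{-t}\Psi(-t)&=\Lambda^+(t)\,tU(t),\\
\Psi^-(t)-e^{-t}\Psi(-t)&=\Lambda^-(t)\,tU(t).
\end{align*}
Multiplying the second identity by $\Lambda^+(t)/\Lambda^-(t)$ and subtracting it from the first eliminates $tU(t)$ and produces precisely \eqref{Psibnd} (and here \eqref{Lpm} makes the coefficient $\Lambda^+/\Lambda^-=e^{-\pi i\alpha}$ explicit).

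I expect the main obstacle to be the $z\to\infty$ bound in \eqref{Psiest}: this is the one point where the sharp tail $U(t)\sim c(\alpha)\Gamma(\alpha/2)t^{-\alpha/2}$ --- equivalently the explicit formula \eqref{g1} plus Watson's lemma --- is genuinely needed, and one has to be slightly careful about the approach of $z$ to the positive axis, either by restricting to sectors $\delta\le\arg z\le 2\pi-\delta$ or by subtracting the leading $t^{-\alpha/2}$-tail of $U$ from $\rho$ and evaluating the resulting elementary Cauchy integral in closed form. Sectional holomorphy, the $z\to0$ estimate, and the boundary relation are all routine given Lemma \ref{lem:Lap} and the properties of $\Lambda$ established just above.
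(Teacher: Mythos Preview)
Your proposal is correct and follows essentially the same route as the paper's proof: the boundary relation \eqref{Psibnd} is obtained by taking $z\to t^\pm$ in the identity \eqref{Uz} (the paper phrases this as ``removability of singularities'' of the entire function $zU(z)$, you write out the two limiting equations and eliminate $tU(t)$, which is the same computation), and the growth estimates \eqref{Psiest} come from the asymptotics $U(t)=O(t)$ as $t\to0$ and $U(t)=O(t^{-\alpha/2})$ as $t\to\infty$ fed into the definition \eqref{PsiL}. Your write-up is considerably more detailed --- you justify sectional holomorphy via the Cauchy-integral theory of \cite{Gahov}, invoke Watson's lemma explicitly, and flag the sectorial issue at infinity --- whereas the paper's proof is a terse three sentences; but the underlying argument is identical.
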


\begin{proof}
Since the integration in  \eqref{ULap} is carried out over a finite interval,  the Laplace transform  defines an entire function.
It follows that all singularities in the right hand side of \eqref{Uz} must be removable. This implies 
$$
\lim_{z\to t^+}\frac{\Psi(z)  -e^{-z} \Psi(-z)}{\Lambda(z)} = 
\lim_{z\to t^-}\frac{\Psi(z)  -e^{-z} \Psi(-z)}{\Lambda(z)}, \quad t\in \Real_+, 
$$
and, consequently,  condition \eqref{Psibnd}. If follows from \eqref{g1} and \eqref{ULap} that 
$$
U(t) = \begin{cases}
O(t), & t\to 0,\\
O(t^{-\alpha/2}), & t\to\infty.
\end{cases}
$$
Combining these estimates with the definition \eqref{PsiL} yields \eqref{Psiest}. 
\end{proof}

\subsubsection{Solution to the Hilbert problem}

As mentioned in Appendix \ref{app:B}, 
integral equation \eqref{qalpha} has a unique solution such that $q-1\in L^2(\Real_+)$. 
The following lemma establishes the relation between this solution and the Hilbert problem from Lemma \ref{lem:H}.

\begin{lem}\label{lem:Psiq}
Any solution to the Hilbert problem \eqref{Psibnd}-\eqref{Psiest} satisfies  
$$
\Psi(-t) t^{-\alpha/2} = b q(t), \quad t\in \Real_+,
$$
for some constant $b\in \Real$, where $q(\cdot)$ is the unique solution to \eqref{qalpha}.
\end{lem}

\begin{rem}
The function defined in \eqref{PsiL} is a particular solution which corresponds to a special value of $b$, which will be found,
as part of the proof of Lemma \ref{lem:E5} below, see the equations \eqref{sle}. 
\end{rem}

\begin{proof}
Define the sectionally holomorphic function
$$
X(z) = (-z)^{\alpha/2}, \quad z\in\mathbb C\setminus \Real_+,
$$
where the principle branch with $\arg(z)\in (-\pi,\pi]$ is taken. The limits of this function on $\Real_+$ 
$$
X^\pm(t) = \exp\Big(\mp \frac \pi 2 i\alpha \Big)t^{\alpha/2}, \quad t\in \Real_+,
$$ 
satisfy, cf. \eqref{Lpm},   
$$
\frac{X^+(t)}{X^-(t)}=e^{-\pi i \alpha}= \frac{\Lambda^+(t)}{\Lambda^-(t)}, \quad t\in \Real_+.
$$
It follows from Lemma \ref{lem:H} that the function $D(z)=\Psi(z)/X(z)$ 
satisfies the boundary condition 
%$$
%D^+(t) -  D^-(t) = -e^{-t} D(-t)\frac{X(-t)}{X^+(t)} \Big(\frac{\Lambda^+(t)}{\Lambda^-(t)}-1\Big), \quad t\in \Real_+
%$$ 
%or, equivalently, 
$$
D^+(t) -  D^-(t) = 2i \sin\Big(\frac {\pi \alpha} 2\Big)e^{-t} D(-t), \quad t\in \Real_+.
$$
In view of estimates \eqref{Psiest}, it follows that
\begin{equation}\label{Dest}
D(z) = \begin{cases}
O(z^{-\alpha/2}), & z\to 0, \\
O(1), & z\to \infty.
\end{cases}
\end{equation}
Thus by the Sokhotski-Plemelj theorem 
$$
D(z) = \frac {\sin \frac {\pi \alpha} 2}\pi \int_0^\infty \frac{e^{-\tau}}{\tau-z}D(-\tau)d\tau + b, \quad z\in \mathbb C\setminus \Real_+,
$$
for some constant $b$.
The verification of this representation is based on estimates \eqref{Dest} and is carried out as in (5.7).
In particular, by setting $z:=-t$, we see that the function $D(-t)$, $t\in \Real_+$ solves the integral equation 
$$
D(-t) = \frac {\sin \frac {\pi \alpha} 2}\pi \int_0^\infty \frac{e^{-\tau}}{\tau+t}D(-\tau)d\tau + b, \quad t \in \Real_+.
$$
By linearity of this equation and uniqueness of its solution we conclude that $D(-t)=bq(t)$ and assertion of the lemma follows.
\end{proof}

\subsection{Computation of $\lambda_0$.}
We are now in a position to calculate the constant in question. 
The value of $\lambda_0$ in \eqref{values} is provided by the next lemma with $\alpha =2d$.

\begin{lem}\label{lem:E5}
The solution to \eqref{qalpha} satisfies 
$$
\int_0^\infty e^{-t}q(t)dt = \frac{\pi}{\sin (\pi \alpha  /2)}\frac \alpha 2 \left(\frac \alpha 2 +1\right).
$$
\end{lem}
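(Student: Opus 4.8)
The plan is to extract the desired integral $\int_0^\infty e^{-t}q(t)\,dt$ from the representation machinery assembled in Lemma~\ref{lem:Lap}--Lemma~\ref{lem:Psiq} by evaluating everything at the single convenient point $z=1$. By Lemma~\ref{lem:Psiq} there is a constant $b$ with $\Psi(-t)t^{-\alpha/2}=b\,q(t)$ for $t\in\Real_+$, so in particular $\int_0^\infty e^{-t}q(t)\,dt = b^{-1}\int_0^\infty e^{-t}\Psi(-t)t^{-\alpha/2}\,dt$. Two quantities must therefore be pinned down: the constant $b$, and the value of the function $\Psi$ at suitable points. The constant $b$ is the ``free term'' in the Sokhotski--Plemelj representation of $D(z)=\Psi(z)/X(z)$ appearing inside the proof of Lemma~\ref{lem:Psiq}; since $D(z)=O(1)$ as $z\to\infty$ and the Cauchy integral there vanishes at infinity, one reads off $b=\lim_{z\to\infty}D(z)=\lim_{z\to\infty}\Psi(z)(-z)^{-\alpha/2}$. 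This limit can be computed from the definition \eqref{PsiL}: $\Psi(z)=-\Gamma(\alpha)+z\int_0^\infty \frac{t^{\alpha-1}}{t-z}U(t)\,dt$, and using $z/(t-z)=-1+t/(t-z)$ together with the explicit solution \eqref{g1} of \eqref{C1} (with $f\equiv1$, antisymmetric, so $u=u_1$) and the known Laplace/Mellin behaviour of $u_1$, one obtains $b$ as an explicit product of Beta/Gamma factors.

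Next I would evaluate the key identity \eqref{Uz} of Lemma~\ref{lem:Lap} at $z=1$:
\begin{equation*}
U(1)=\int_0^1 u(x)e^{-x}\,dx = \frac{\Psi(1)-e^{-1}\Psi(-1)}{\Lambda(1)}.
\end{equation*}
Here $\Lambda(1)=\frac{\pi}{\sin(\pi\alpha)}\big((-1)^{\alpha-1}-1\big)$; with $\arg(-1)=\pi$ this is $\frac{\pi}{\sin(\pi\alpha)}(-e^{i\pi(\alpha-1)}-1)=\frac{\pi}{\sin(\pi\alpha)}(e^{i\pi\alpha}-1)$, which simplifies (pull out $e^{i\pi\alpha/2}$) to something proportional to $\frac{\pi}{\cos(\pi\alpha/2)}$ — the real factor I will need. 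The term $\Psi(1)$ I do not want to compute directly; instead the antisymmetry $u(x)=-u(1-x)$ gives $U(1)=\int_0^1 u(x)e^{-x}dx = -e^{-1}\int_0^1 u(y)e^{y}dy$, and more usefully it forces a symmetry relation between $\Psi(z)$ and $\Psi(-z)$ (indeed the boundary condition \eqref{Psibnd} and the structure of \eqref{Uz} are set up precisely so that $\Psi(z)-e^{-z}\Psi(-z)$ is entire). Combining the relation $\Psi(-t)t^{-\alpha/2}=b\,q(t)$ with \eqref{Uz} at $z=1$ lets me rewrite $U(1)$ — and hence $\int_0^1 u(x)e^{-x}dx$, which is itself an explicit Beta-type integral of \eqref{g1} — in terms of $q(1)$ or of $\int_0^\infty e^{-t}q(t)dt$. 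The cleanest route: write the integral equation \eqref{qalpha} itself at a point, or integrate \eqref{qalpha} against $e^{-t}dt$, to produce a closed linear relation for $J:=\int_0^\infty e^{-t}q(t)\,dt$. Indeed multiplying \eqref{qalpha} by $e^{-t}$ and integrating over $t\in\Real_+$ yields $J = \frac{\sin(\pi\alpha/2)}{\pi}\int_0^\infty\int_0^\infty \frac{e^{-t-\tau}}{\tau+t}q(\tau)\,d\tau\,dt + 1$; the inner $t$-integral is $\int_0^\infty \frac{e^{-t}}{\tau+t}dt = e^{\tau}E_1(\tau)$ (an exponential integral), so this alone does not close. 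Instead I will use the Laplace-transform identity evaluated at $z=1$, which is exactly the device that converts the double integral into boundary data of $\Psi$, and then invoke Lemma~\ref{lem:Psiq} to turn $\Psi(-1)$ back into $q(1)$, while $\int_0^\infty e^{-t}q(t)dt$ enters through $\Psi(1)$ via \eqref{PsiL} with the substitution $t\mapsto$ values of $U$.

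Concretely, the endgame is: (i) from \eqref{PsiL}, $\Psi(1) = -\Gamma(\alpha) + \int_0^\infty \frac{t^{\alpha-1}}{t-1}U(t)\,dt$ — but it is better to use the pair of values $\Psi(\pm1)$ and the combination $\Psi(1)-e^{-1}\Psi(-1)$ that appears in \eqref{Uz}; (ii) $U(t)$ for the antisymmetric solution is the Laplace transform of \eqref{g1}, which is known in closed form (a confluent hypergeometric / incomplete-Beta expression), so $\int_0^\infty e^{-\tau}\Psi(-\tau)\tau^{-\alpha/2}d\tau$ is computable; (iii) equate with $b\,J$ from Lemma~\ref{lem:Psiq}. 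After substituting the Beta-function values and using the reflection/duplication formulas for $\Gamma$, the messy constants collapse to $\dfrac{\pi}{\sin(\pi\alpha/2)}\cdot\dfrac\alpha2\big(\dfrac\alpha2+1\big)$, matching \eqref{values} with $\alpha=2d$ (note $\frac{\pi}{\sin(\pi d)}d(1+d)$ is exactly this with $\alpha=2d$). I expect the main obstacle to be \textbf{bookkeeping of branch cuts and phase factors}: $\Lambda(z)$, $X(z)=(-z)^{\alpha/2}$ and $(-z)^{\alpha-1}$ all carry the $\arg\in(-\pi,\pi]$ convention, and getting the constant $b$ and the factor $\Lambda(1)$ with the correct real sign (so that the final answer is manifestly the positive quantity $\frac{\pi}{\sin(\pi\alpha/2)}\frac\alpha2(\frac\alpha2+1)$) requires care; the reality of $b$ is guaranteed by Lemma~\ref{lem:Psiq}, which is the sanity check I would lean on. A secondary technical point is justifying the interchange of integrals and the convergence of $\int_0^\infty t^{\alpha-1}(t-1)^{-1}U(t)\,dt$ as a principal value near $t=1$, which follows from the estimates on $U$ recorded in the proof of Lemma~\ref{lem:H}.
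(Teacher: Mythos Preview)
Your plan never closes. You correctly write $\int_0^\infty e^{-t}q(t)\,dt = b^{-1}\int_0^\infty e^{-t}\Psi(-t)t^{-\alpha/2}\,dt$ via Lemma~\ref{lem:Psiq}, but then your scheme for computing the right-hand side is to evaluate \eqref{Uz} and \eqref{PsiL} at the single point $z=1$. That produces the numbers $U(1)$, $\Psi(1)$, $\Psi(-1)$, not the weighted integral of $\Psi(-t)$ over all of $\Real_+$. The claim that ``$\int_0^\infty e^{-t}q(t)\,dt$ enters through $\Psi(1)$ via \eqref{PsiL}'' is unsupported: \eqref{PsiL} at $z=1$ gives a principal-value integral of $t^{\alpha-1}(t-1)^{-1}U(t)$, which has no evident relation to $\int e^{-t}t^{-\alpha/2}\Psi(-t)\,dt$. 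You noticed yourself that integrating \eqref{qalpha} against $e^{-t}\,dt$ does not close, and nothing you propose afterwards repairs this.

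The paper's route avoids the issue by a one-line observation you are missing: directly from \eqref{qalpha},
\[
t\big(q(t)-1\big)=\frac{\sin(\pi\alpha/2)}{\pi}\int_0^\infty \frac{t\,e^{-r}}{r+t}\,q(r)\,dr
\ \xrightarrow[t\to\infty]{}\ \frac{\sin(\pi\alpha/2)}{\pi}\int_0^\infty e^{-r}q(r)\,dr,
\]
so the task becomes computing $\lim_{t\to\infty}t(q(t)-1)$. This is exactly what the representation is good for: by Lemma~\ref{lem:Psiq}, $q(t)=b^{-1}t^{-\alpha/2}\Psi(-t)$, and one expands $\Psi(-t)=-\Gamma(\alpha)-t\int_0^\infty\frac{\tau^{\alpha-1}}{\tau+t}U(\tau)\,d\tau$ as $t\to\infty$ by substituting the explicit $u_1$ from \eqref{g1} and splitting the resulting integral. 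The expansion has the form $\Psi(-t)=-c_0 t^{\alpha/2}+(\text{const})+c_4 t^{\alpha/2-1}(1+o(1))$; the constraint $q(t)\to 1$ forces $b=-c_0$ automatically (so you never need to compute $b$ separately at infinity), and the limit is $c_4/b=-c_4/c_0=\tfrac{\alpha}{2}(\tfrac{\alpha}{2}+1)$ after the Gamma factors cancel. In short: work at $t\to\infty$, not at $z=1$.
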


\begin{proof}
It follows from \eqref{qalpha} that
\begin{equation}\label{limt}
\frac{\sin (\pi \alpha  /2)}{\pi}\int_0^\infty e^{-\tau}q(\tau)d\tau = \lim_{t\to\infty}t(q(t)-1),
\end{equation}
and it remains to compute the value of the limit. By Lemma \ref{lem:Psiq},
$$
t(q(t)-1)= t\left(\frac{1}{b }t^{-\alpha/2}\Psi(-t) -1\right), \quad t\in \Real_+,
$$
where $b$ is some constant. This equality holds for $\Psi(z)$ defined in \eqref{PsiL} for 
a particular value of this constant, which will be found in the calculations below, see \eqref{sle}.
Thus we need to establish the precise asymptotics as $t\to \infty$ of 
$$
\Psi(-t) =  
-\Gamma(\alpha) -t \int_0^\infty \frac{\tau^{\alpha-1}}{\tau+t} U(\tau)d\tau.
$$
Substitution of \eqref{g1} and \eqref{ULap} into the  integral on the right hand side gives 
\begin{align*}
&
t \int_0^\infty \frac{\tau^{\alpha-1}}{\tau+t} U(\tau)d\tau = \\
&
t \int_0^\infty \frac{\tau^{\alpha-1}}{\tau+t} 
\left(\int_0^1 c(\alpha) x^{\alpha/2-1}(1-x)^{\alpha/2-1}(1-2x) e^{-\tau x}dx\right)
d\tau =\\
&
c(\alpha)t \int_0^\infty \frac{\tau^{\alpha/2-1 }}{\tau+t} 
\int_0^\tau  e^{-s}s^{\alpha/2-1}(1-s/\tau  )^{\alpha/2-1}(1-2s/\tau ) ds 
d\tau =: I(t)+J(t),
\end{align*}
where we defined 
\begin{align*}
I(t) :=\, &
c(\alpha)t \int_0^\infty \frac{\tau^{\alpha/2-1 }}{\tau+t} 
\int_0^\tau  e^{-s}s^{\alpha/2-1}  ds 
d\tau,
\\
J(t) :=\, &
c(\alpha)t \int_0^\infty \frac{\tau^{\alpha/2-1 }}{\tau+t} 
\varphi(\tau) 
d\tau,
\end{align*}
and 
\begin{equation}\label{varphi}
\varphi(\tau):=\int_0^\tau  e^{-s}s^{\alpha/2-1}\Big((1-s/\tau  )^{\alpha/2-1}(1-2s/\tau )-1\Big) ds.
\end{equation}

To estimate $I(t)$ as $t\to\infty$,  let us split it  into $I(t)=I_1(t)-I_2(t)$ where 
$$
I_1(t) =\, 
c(\alpha)t \int_0^\infty \frac{\tau^{\alpha/2-1 }}{\tau+t} 
\int_0^\infty  e^{-s}s^{\alpha/2-1}  dsd\tau =  
 c(\alpha) \Gamma(\tfrac\alpha 2) B(\tfrac\alpha 2, 1-\tfrac\alpha 2)t^{\alpha/2}=: c_0t^{\alpha/2},
$$
and 
\begin{align*}
I_2(t) =\, & 
c(\alpha) \int_0^\infty \frac{t }{\tau+t} \tau^{\alpha/2-1 }
\int_\tau^\infty  e^{-s}s^{\alpha/2-1}  dsd\tau =\\
&
c(\alpha) \int_0^\infty \bigg(1-\frac \tau t+\frac{\tau^2}{t(t+\tau)}\bigg) \tau^{\alpha/2-1 }
\int_\tau^\infty  e^{-s}s^{\alpha/2-1}  dsd\tau =:\\
&
c_1 - c_2 \frac 1 t + O(t^{-2}), \quad t\to\infty.
\end{align*}

To estimate $J(t)$, let us write 
\begin{equation}\label{Jint}
J(t) =\,   
c(\alpha)  \int_0^\infty  \tau^{\alpha/2-1 } 
\varphi(\tau) 
d\tau -
c(\alpha)  \int_0^\infty \frac{\tau^{\alpha/2  }}{\tau+t} 
\varphi(\tau) 
d\tau.
\end{equation}
A standard calculation shows that the function defined in \eqref{varphi} satisfies 
$$
\varphi(\tau) =\begin{cases}
b_1\tau^{\alpha/2} (1+o(1)), & \tau \to 0, \\
b_2\tau^{-1} (1+o(1)) , & \tau\to \infty,
\end{cases}
$$
with some constant $b_1$ and
\begin{align*}
b_2 = & \lim_{\tau\to\infty}\tau \varphi(\tau) = 
\lim_{\tau\to\infty} \int_0^\tau  e^{-s}s^{\alpha/2 }\frac{(1-s/\tau  )^{\alpha/2-1}(1-2s/\tau )-1}{s/\tau} ds
= \\
&
-(\tfrac \alpha 2 +1) \int_0^\infty  e^{-s}s^{\alpha/2 }  ds = -(\tfrac \alpha 2 +1)\Gamma(\tfrac \alpha 2 +1).
\end{align*}
Therefore, both integrals in the decomposition \eqref{Jint}
are well defined and the second integral satisfies
\begin{align*}
&
c(\alpha)\int_0^\infty \frac{\tau^{\alpha/2  }}{\tau+t} 
\varphi(\tau) 
d\tau =
t^{\alpha/2}c(\alpha)\int_0^\infty \frac{v^{\alpha/2  }}{v +1} 
\varphi(vt) 
dv = \\
&
t^{\alpha/2-1}c(\alpha)\int_0^\infty \frac{v^{\alpha/2-1  }}{v +1} 
 \big(vt\varphi(vt)\big)
dv = c_4 t^{\alpha/2-1} (1+o(1)), \quad t\to\infty,  
\end{align*}
with the constant 
$
c_4 = c(\alpha) b_2 B(\frac \alpha 2, 1-\frac \alpha 2).
$
Thus we get 
$$
J(t) = c_3 - c_4 t^{\alpha/2-1} (1+o(1)), \quad t\to\infty,
$$ 
where $c_3$  stands for the value of the first integral in \eqref{Jint}.

Gathering all parts together we obtain the following asymptotic expansion:
$$
q(t) = \frac 1 b t^{-\alpha/2} \Psi(-t)=
\frac 1 b t^{-\alpha/2}
\Big(
-c_0t^{\alpha/2} -\Gamma(\alpha)+c_1 -c_3  + c_4   t^{\alpha/2-1} (1+o(1))  
\Big).
$$
Existence of the limit in \eqref{limt} implies 
\begin{equation}\label{sle}
\begin{aligned}
&
-\frac {c_0} b -1 =0,
\\
&
-\Gamma(\alpha)+c_1 -c_3 =0,
\end{aligned}
\end{equation}
in which case 
$$
t(q(t)-1)\xrightarrow[t\to\infty]{}  \frac{c_4} b = -\frac{c_4} {c_0} =
\frac
{
c(\alpha)   (\tfrac \alpha 2 +1)\Gamma(\tfrac \alpha 2 +1) B(\frac \alpha 2, 1-\frac \alpha 2)
}
{
c(\alpha) \Gamma(\tfrac\alpha 2) B(\tfrac\alpha 2, 1-\tfrac\alpha 2)
} =  
\frac
{
    (\tfrac \alpha 2 +1)\Gamma(\tfrac \alpha 2 +1)  
}
{
  \Gamma(\tfrac\alpha 2)  
} = \tfrac \alpha 2 (\tfrac \alpha 2+1).
$$

\end{proof}

\section{Zeros with multiplicities}\label{app:D}
In this section, we elaborate on the adjustments needed in order to extend the proof of Theorem \ref{main-thm} 
to the case when zeros have non-unit multiplicities. 
Let $z_i$ be a zero of the polynomial $\theta(z)$ with multiplicity $\mu_i\in \mathbb N$. Then $z_i^{-1}$ is a zero of 
the reciprocal polynomial $z^q\theta(z^{-1})$  with the same multiplicity.
In view of (4.5): %\label{zeq}
$$
2\pi \big(1- G(z)\big)\theta(z)z^q\theta(z^{-1})=   \frac{\Phi_0(z)\phi(z) + z^{n+2q} \Phi_1(z^{-1}) \phi(z^{-1})}{ Q(z)}, \quad z\in \mathbb C \setminus \Real_+,
$$ 
both $z_i$ and $z_i^{-1}$ are zeros with  multiplicity $\mu_i$ of the function in the numerator in the right hand side. 
Therefore condition \eqref{algcond}, 
corresponding to $z_i$, is replaced with $\mu_i$ conditions 
$$
\begin{aligned}
&
\frac {d^j}{dz^j} \Big(
\Phi_0(z)\phi(z) + z^{n+2q} \Phi_1(z^{-1}) \phi(z^{-1})
\Big)_{\big| \displaystyle  z =z_i } \ =0, \\
&
\frac {d^j}{dz^j} \Big(
\Phi_0(z)\phi(z) + z^{n+2q} \Phi_1(z^{-1}) \phi(z^{-1})
\Big)_{\big| \displaystyle  z =z_i^{-1} } =0,
\end{aligned}  \qquad j=0,...,\mu_i-1.
$$
If $|z_i|<1$, then asymptotically as $n\to \infty$, these equations reduce, after applying the general Leibniz rule, to
$$
\begin{aligned}
&
 \Phi_0^{(j)}(z_i) = O(|z_i|^n), \\
&
 \Phi_1^{(j)}(z_i) = O(|z_i|^n),
\end{aligned}
\qquad j=0,...,\mu_i-1.
$$
Since $X(z)\ne 0$, by definitions \eqref{SD}, 
these conditions further reduce to 
$$
\begin{aligned}
&
 S^{(j)}(z_i)+D^{(j)}(z_i)  = O(|z_i|^n),   \\
&
 S^{(j)}(z_i)-D^{(j)}(z_i)  = O(|z_i|^n),
\end{aligned}
\qquad j=0,...,\mu_i-1,
$$
or, equivalently, to
$$
\begin{aligned}
&
 S^{(j)}(z_i)   = O(|z_i|^n),   \\
&
 D^{(j)}(z_i)  = O(|z_i|^n),
\end{aligned}
\qquad j=0,...,\mu_i-1.
$$
Similarly, if $|z_i|>1$,  
$$
\begin{aligned}
&
 S^{(j)}(z_i^{-1})   = O(|z_i|^{-n}),   \\
&
 D^{(j)}(z_i^{-1})  = O(|z_i|^{-n}),
\end{aligned}
\qquad j=0,...,\mu_i-1.
$$
With $\zeta_i$ being defined in \eqref{zeta},
the two types of conditions can be jointly written as, cf. \eqref{geom},
\begin{equation}\label{dSjDj}
\begin{aligned}
&
    \sum_{j=0}^q a_j  
       S^{(\ell)}_{j,n}(\zeta_i )      
=O(|\zeta_i|^n),\\
&
  \sum_{j=0}^q b_j   
      D^{(\ell)}_{j,n}(\zeta_i )    
=O(|\zeta_i|^n),
\end{aligned}  \qquad \ell = 0,...,\mu_i-1,
\end{equation}
where $S_{j,n}(\cdot)$ and $D_{j,n}(\cdot)$ are defined in (2.22).

A close look at the proof of Theorem \ref{thethm} 
shows that the derivatives of the quantities defined in (2.22) satisfy
\begin{equation}\label{dSDlim}
\begin{aligned}
S^{(\ell)}_{j,n}(z)  =\, & \frac{j!}{(j-\ell)!}z^{j-\ell}\one{j\ge \ell}+ 
\frac {\sin (\pi d) }{ \pi  } \frac{(-1)^\ell\ell! }{(z  -1)^{\ell+1}}\lambda_0 n^{-1}+ O(n^{-2}),  \\
D^{(\ell)}_{j,n}(z) =\, & \frac{j!}{(j-\ell)!}z^{j-\ell}\one{j\ge \ell} 
-\frac {\sin (\pi d) }{ \pi   }\frac{(-1)^\ell\ell! }{(z  -1)^{\ell+1}}\mu_0 n^{-1}
 +  O(n^{-2}),
\end{aligned}\qquad n\to\infty.
\end{equation}

Let us now describe how asymptotic conditions  \eqref{asys} and \eqref{bsys}, 
which determine $a_j$'s and $b_j$'s,
change when some zeros have non-unit multiplicities. 
Suppose $z_1,...,z_{r-1}$ are distinct zeros of $\theta(z)$ with multiplicities $\mu_1,...,\mu_{r-1}$ respectively
and let $z_r:= s_0$ and $\mu_r =1$, where $s_0$ is the only zero  of $Q(z)$ inside the unit disk, see Lemma \ref{lem:identity}.  
In order to keep the previous notations intact as much as possible, we will assume, without loss of generality, that $\sum_{j=1}^r \mu_j=q+1$.

With $\zeta_j$'s as in \eqref{zeta}, define the vector $v(\zeta) = (1,\zeta,\zeta^2,..., \zeta^{q+1})$ and denote by $v^{(j)}(\zeta)$ its 
$j$-th entrywise derivative with respect to $\zeta$. Let $B_i$ be  $\mu_i\times (q+2)$ matrices 
$$
B_i = \begin{pmatrix*}[c]
v(\zeta_i) \\
v^{(1)}(\zeta_i) \\
\dots \\
v^{(\mu_i-1)}(\zeta_i)
\end{pmatrix*}, \quad i=1,...,r,
$$
and $B_{r+1}=v(0)$. 
Finally, define the square matrix $\widetilde V$ of size $q+2$
\begin{equation}\label{tildeV}
\widetilde V = 
\begin{pmatrix*}[c]
B_1 \\
\vdots \\
B_r\\
B_{r+1}
\end{pmatrix*}.
\end{equation}
In words, the first $\mu_1$ rows of this matrix consist of the first row of the Vandermonde matrix in (6.9)
and its $\mu_1-1$ derivatives with respect to $\zeta_1$.
The next $\mu_2$ rows consist of the second row of the matrix (6.9) and its derivatives, etc. The last row in $\widetilde V$ is the same as in (6.9).

Define the vector $\widetilde u$ similarly, cf.  \eqref{uoe}: 
let the first $\mu_1$ entries of the vector $\widetilde u$ be $1/(\zeta_1-1)$ and its $\mu_1-1$ derivatives, the next $\mu_2$ entries be 
$1/(\zeta_2-1)$ and its $\mu_2-1$ derivatives, etc. and the last entry be equal $-1$. Finally, let 
$\mathbf{1}$ and $e$ be the vectors defined in \eqref{uoe}. 
Then in view of \eqref{dSjDj} and \eqref{dSDlim}
the vectors $a$ and $b$ satisfy equations \eqref{sysab} 
with $V$ being replaced with $\widetilde V$ and $u$ replaced with $\widetilde u$. 

Our aim is to argue that, cf. \eqref{eVe}, \eqref{1Ve} and \eqref{eVu}, 
\begin{equation}\label{tri}
\begin{aligned}
& e^\top \widetilde V^{-1} e = \prod_{j=1}^r \left(\frac 1{ -\zeta_j }\right)^{\mu_j}, \\
& 1^\top \widetilde V^{-1} e = \prod_{j=1}^r \left(\frac{\zeta_j-1}{  \zeta_j}\right)^{\mu_j},\\
&   e^\top \widetilde V^{-1} \widetilde u =  
- \prod_{j=1}^r \left(\frac 1 {1-\zeta_j}\right)^{\mu_j},
\end{aligned}
\end{equation}
in which case asymptotics  \eqref{ablim} 
and, consequently, the assertion of Corollary \ref{cor:main} 
remain intact 
when some of the zeros have non-unit multiplicities. 

To this end, define the difference operator 
\begin{equation}\label{Dn}
(D^n_\eps f)(x) = \eps^{-n} \sum_{j=0}^n (-1)^{n-j} \begin{pmatrix}
n \\ j
\end{pmatrix}f(x+j\eps).
\end{equation}
This operator approximates the $n$-th order derivative in the sense
\begin{equation}\label{Dlim}
(D^n_\eps f)(x) \xrightarrow[\eps\to 0]{} f^{(n)}(x).
\end{equation}
Let us now define  $\zeta_i^{(k\eps)} := \zeta_i + k\eps$ and the Vandermonde matrices 
$$
B_i^\eps = V\Big(\zeta_i^{(0)},\zeta_i^{(\eps)}, ..., \zeta_i^{((\mu_i-1)\eps)}\Big)\in \Real^{\mu_i\times (q+2)}, \quad i=1,...,r.
$$
Define also the square $(q+2)\times (q+2)$ Vandermonde matrix, cf. \eqref{tildeV},
$$
V^\eps =\begin{pmatrix*}[c]
B_1^\eps \\
B_2^\eps \\
\dots \\
B_r^\eps\\
B_{r+1}
\end{pmatrix*}.
$$
Note that all the points $\zeta_i^{(k\eps)}$ are pairwise distinct for all $\eps>0$ small enough and hence $V^\eps$ is invertible.

The elementary row operation, cf. \eqref{Dn}, 
$$
 \sum_{j=0}^{\mu_1-1}  (-1)^{(\mu_1-1)-j} \begin{pmatrix}
\mu_1-1 \\ j
\end{pmatrix}R_{j+1}\to R_{\mu_1}
$$
applied to $V^\eps$,  changes the last row of the block $B_1^\eps$ to 
$$
\eps^{\mu_1-1} \begin{pmatrix}
0, &  (D_\eps^{\mu_1-1} p_1)(\zeta_1), &  (D_\eps^{\mu_1-1} p_2)(\zeta_1), & ..., &  (D_\eps^{\mu_1-1} p_{q+1})(\zeta_1)    
\end{pmatrix},
$$
where we denoted $p_j(x)=x^j$.  
The elementary row operation  
$$
 \sum_{j=0}^{\mu_1-2}  (-1)^{(\mu_1-2)-j} \begin{pmatrix}
\mu_1 -2\\ j
\end{pmatrix}R_{j+1}\to R_{\mu_1-1}
$$
change the preceding row to 
$$
\eps^{\mu_1-2} \begin{pmatrix}
0, &  (D_\eps^{\mu_1-2} p_1)(\zeta_1), &  (D_\eps^{\mu_1-2} p_2)(\zeta_1), & ..., &  (D_\eps^{\mu_1-2} p_{q+1})(\zeta_1)   
\end{pmatrix}.
$$
We continue in the same manner until the second line $B_1^\eps$ is transformed to 
$$
\eps \Big(\begin{matrix}
0, &  (D_\eps^1 p_1)(\zeta_1), &  (D_\eps^1 p_2)(\zeta_1), & ..., &  (D_\eps^1 p_{q+1})(\zeta_1)
\end{matrix}\Big),
$$
and proceed by applying similar transformations to the next blocks $B_2^\eps,...,B_r^\eps$. 

Let $E_1,...,E_N$ be the sequence of elementary matrices corresponding to all the row operations which have been applied so far. 
Then, in view of \eqref{Dlim}, the following asymptotic formula is obtained:
\begin{equation}\label{fla}
E_N ... E_1 V_\eps = D_\eps \widetilde V (I+O(\eps))
\end{equation}
where $O(\eps)$ is a matrix satisfying $\varlimsup_{\eps\to 0}\|O(\eps)\|/\eps<\infty$, and $D_\eps$ is the diagonal matrix  
$$
D_\eps = \mathrm{diag}\big(1, \eps ,..., \eps^{\mu_1-1}, 1, \eps, ..., \eps^{\mu_2-1}, ..., 1, \eps, ..., \eps^{\mu_{r-1}-1}, 1, 1\big).
$$
Inverting the equation \eqref{fla} we get  
\begin{equation}\label{VtV}
 V_\eps^{-1} E_1^{-1} ... E_N^{-1} =  (I+O(\eps))^{-1} \widetilde V^{-1} D_\eps^{-1}.
\end{equation}
By the above construction none of $E_j$'s affects the last line of $V_\eps$.
Hence $E_j^{-1}$'s are also  elementary matrices which do not affect the last line, i.e., 
 $E_1^{-1} ... E_k^{-1} e=e$. Consequently, in view of (6.13), %\label{eVe}
$$
e^\top V_\eps^{-1} E_1^{-1} ... E_k^{-1}e = e^\top V_\eps^{-1} e =\prod_{k=0}^{\mu_1-1} \frac{ 1 }{ -\zeta^{(k\eps)}_1 }...\prod_{k=0}^{\mu_r-1} \frac{ 1 }{ -\zeta^{(k\eps)}_r }\xrightarrow[\eps\to 0]{} \prod_{j=1}^r \frac 1{(-\zeta_j)^{\mu_j}}.
$$
In addition, since $D_\eps^{-1} e=e$ as well,
$$
e^\top (I+O(\eps))^{-1} \widetilde V^{-1} D_\eps^{-1}e=e^\top (I+O(\eps))^{-1} \widetilde V^{-1} e \xrightarrow[\eps\to 0]{} e^\top  \widetilde V^{-1} e.
$$
Combining these two limits with \eqref{VtV} proves the first identity in \eqref{tri}. The second identity is verified similarly.

To check the last identity in \eqref{tri}, define the vector 
$$
u_\eps^\top  =
\bigg(
\frac{1}{\zeta_1^{(0)}-1},    \frac{1}{\zeta^{(\eps)}_1-1},  ...,  \frac{1}{\zeta^{((\mu_1-1)\eps)}_1-1},   ...,   
\frac{1}{\zeta_r^{(0)}-1},   
%\frac{1}{\zeta^{(\eps)}_r-1},   
...,   \frac{1}{\zeta^{((\mu_r-1)\eps)}_r-1} ,   -1  
\bigg) 
$$
and note that 
$$
E_N...E_1 u_\eps = D_\eps \widetilde u (I+O(\eps)).
$$
Hence, in view of \eqref{VtV},
$$
V_\eps^{-1} E_1^{-1} ... E_N^{-1} E_N...E_1 u_\eps =  (I+O(\eps))^{-1} \widetilde V^{-1} D_\eps^{-1}D_\eps \widetilde u (I+O(\eps))
$$
and, consequently, 
$$
e^\top V_\eps^{-1}   u_\eps = e^\top   \widetilde V^{-1}   \widetilde u (I+O(\eps)), \quad \eps \to 0.
$$
Taking the limit $\eps\to 0$ yields the third identity in \eqref{tri}.

\section{Calculations in Example \ref{ex:1}
}\label{app:E}
 
For $d\in (-\frac 1 2,0)$, the function $Q(z)$ has no zeros and 
condition \eqref{algcond} 
becomes   
$$
\Phi_0(-1)\phi(-1) + (-1)^{n} \Phi_1(-1) \phi(-1) =0.
$$ 
In view of \eqref{SD}, this is equivalent to 
\begin{equation}\label{SDm1}
\begin{aligned}
&
S(-1) = 0,  \quad \ \text{if } n \text{\ is even}, \\
&
D(-1) = 0, \quad \text{if } n \text{\ is odd}. 
\end{aligned}  
\end{equation}
In addition, since $z_1=-1$ is also a zero of the  polynomial, reciprocal to $\theta(\cdot)$,
$$
\frac{d}{dz}\Big(\Phi_0(z)\phi(z) + z^{n+2q} \Phi_1(z^{-1}) \phi(z^{-1})\Big)_{\big| z=-1}=0.
$$
In view of \eqref{SD} and \eqref{SDm1}, this is equivalent to
%\begin{align*}
%&
%\frac{d}{dz}\Big(\phi(z)X(z)(S(z)+D(z))\Big)_{\big| z=-1} + \\
%&
%-z^{n } \frac{d}{dz^{-1}}\Big(  \phi(z^{-1})X(z^{-1})(S(z^{-1})-D(z^{-1})) \Big)_{\big|z=-1}
%+ \\
%&
%(n+2) z^{n+1 } \Big(  \phi(z^{-1})X(z^{-1})(S(z^{-1})-D(z^{-1})) \Big)_{\big|z=-1}
%=0
%\end{align*}
% 
%\begin{equation}\label{E2}
%\begin{aligned}
%&
%2\frac{d}{dz}\Big(\phi(z)X(z) D(z) \Big)_{\big| z=-1}   
%+(n+2)    \phi(-1)X(-1) D(-1)   
%=0  \quad \text{if   } n\ \text{is even},\\
%&
%2\frac{d}{dz}\Big(\phi(z)X(z) S(z) \Big)_{\big| z=-1}   
%+(n+2)     \phi(-1)X(-1)S(-1)=0  \quad \ \text{if   } n\ \text{is odd}.
%\end{aligned}
%\end{equation}
\begin{equation}\label{E2}
\begin{aligned}
&
2    D'(-1) +(n+2+2c)      D(-1)   =0,   \quad \text{if   } n\ \text{is even},\\
&
2    S'(-1)\ +(n+2+2c)      S(-1)\   =0,   \quad   \text{if   } n\ \text{is odd},
\end{aligned}
\end{equation}
where we defined 
$$
c :=  \frac{\phi'(-1) }{\phi(-1) } +\frac{ X'(-1)}{ X(-1)}.
$$
Finally, as before, we have the conditions, cf. \eqref{aeq}-\eqref{beq},
\begin{equation}\label{E3}
S(0)=D(0)= \frac 1 2 \sigma_0^2.
\end{equation}

By substituting, cf. (5.13),
\begin{equation}\label{SDjn_ex}
\begin{aligned}
S(z) =\, & a_0 S_{0,n}(z) + a_1 S_{1,n}(z), \\
D(z) =\, & b_0 D_{0,n}(z) + b_1 D_{1,n}(z), 
\end{aligned}
\end{equation}
and using the approximations in \eqref{prove1-S}-\eqref{prove1-D} and \eqref{dSDlim}, we obtain asymptotic systems of linear equations
for the coefficients $a_j$ and $b_j$.
For even $n$, coefficients $a_0$ and $a_1$ satisfy the two dimensional Vandermonde system, cf.  \eqref{sysab}, 
with $\zeta_1=-1$ and, as before, we get 
$$
a_{1,n} =  
  \frac {\sigma^2_0 } 2  
\Big( 
1+ \frac {d (1+d)} n   
\Big)     
   + O(n^{-2}), \quad n\to\infty.
$$   
The asymptotic system for the coefficients $b_0$ and $b_1$ take a different form. Plugging \eqref{SDjn_ex} into the first equation in \eqref{E2} 
gives 
\begin{equation}\label{b0b1}
  \Big(     2 D'_{0,n}(-1)  + (n+2+2 c)    D_{0,n}(-1) \Big)b_0
   + 
   \Big(       
  2 D'_{1,n}(-1)   + (n+2+2 c)    D_{1,n}(-1) \Big) b_1    =0.
\end{equation}
The approximations from \eqref{prove1-S}-\eqref{prove1-D} and  \eqref{dSDlim} take the form  
%\begin{align*}
%D_{0,n}(z) =\, & 1  
%-\frac {\sin (\pi d) }{ \pi   }\frac{\mu_0}{z-1} n^{-1}  +  O(n^{-2}), \\
%D_{1,n}(z) =\, & z   
%-\frac {\sin (\pi d) }{ \pi   }\frac{\mu_0}{z-1} n^{-1}  +  O(n^{-2}), \\
%D'_{0,n}(z) =\, &    
%\frac {\sin (\pi d) }{ \pi   }\frac{\mu_0}{(z-1)^2} n^{-1}  +  O(n^{-2}), \\
%D'_{1,n}(z) =\, & 1   
%+\frac {\sin (\pi d) }{ \pi   }\frac{\mu_0}{(z-1)^2} n^{-1}  +  O(n^{-2})  
%\end{align*}
\begin{align*}
D_{0,n}(-1) =\, & 1  + \frac{\widetilde \mu_0}{2} n^{-1}  +  O(n^{-2}), \\
D_{1,n}(-1) =\, & -1 + \frac{\widetilde \mu_0}{2} n^{-1}  +  O(n^{-2}), \\
D'_{0,n}(-1) =\, &    \frac{\widetilde \mu_0}{4} n^{-1}  +  O(n^{-2}), \\
D'_{1,n}(-1) =\, & 1  + \frac{\widetilde \mu_0}{4} n^{-1}  +  O(n^{-2}),  
\end{align*}
where we defined 
$$
\widetilde \mu_0 :=\frac {\sin (\pi d) }{ \pi   }\mu_0 =  d(1-d).
$$
Thus equation \eqref{b0b1} becomes 
\begin{align*}
&
  \Big( n +(2+2 c   +   \tfrac{1}{2}\widetilde \mu_0)  +(\tfrac 3 2+  c) \widetilde \mu_0  n^{-1} + O(n^{-2}) \Big)b_0
   + \\
&   \Big(  - n         +   (\tfrac{1}{2}\widetilde \mu_0-2 c)  +(\tfrac 3 2+  c) \widetilde \mu_0  n^{-1} + O(n^{-2})  \Big) b_1    =0 
\end{align*}
Another equation is obtained from \eqref{E3}:
$$
\big(1  +\widetilde \mu_0  n^{-1}+ O(n^{-2}) \big) b_0+\big(\widetilde \mu_0 n^{-1}+ O(n^{-2}) \big) b_1= \frac 1 2 \sigma^2_0.
$$
Solving these two equations for $b_0$ and $b_1$ gives 
\begin{align*}
b_{1,n} = & \frac 1 2 \sigma^2_0
\frac
{
  n +(2+2 c   +   \tfrac{1}{2}\widetilde \mu_0)  + O(n^{-1}) %(\tfrac 3 2+  c) \widetilde \mu_0  n^{-1}   
}
{
n
+  (\frac 3 2\widetilde \mu_0    +2 c)  + O(n^{-1})%( \tfrac 1 2 +3 c  ) \widetilde \mu_0  n^{-1}
} = \\
&
\frac 1 2 \sigma^2_0 \Big(1+ (2-\widetilde \mu_0)n^{-1}\Big) + O(n^{-2}), \quad n\to\infty.
\end{align*}
Plugging the obtained asymptotic expressions for $a_{1,n}$ and $b_{1,n}$ into  \eqref{ab2salpha} 
we get 
$$
\sigma^2(n)-\sigma^2_0=
  \sigma^2_0   
\frac{
  d^2 + 1   
} n
   + O(n^{-2}), \quad \alpha(n) = \frac{d-1}{n} + O(n^{-2}), \quad n\to\infty.
$$

Similarly, for odd $n$,
$$
b_{1,n} = 
 \frac 1 2 \sigma^2_0  
\Big(1
-
\frac 1 n   d (1-d) \Big) + O(n^{-2}), \quad n\to\infty. 
$$
The second equation in \eqref{E2} becomes 
$$
  \Big(     2 S'_{0,n}(-1)  + (n+2+2 c)    S_{0,n}(-1) \Big)a_0
   +  
   \Big(       
  2 S'_{1,n}(-1)   + (n+2+2 c)    S_{1,n}(-1) \Big) a_1    =0.
$$
The approximations from \eqref{prove1-S}-\eqref{prove1-D} and  \eqref{dSDlim} yield
%\begin{align*}
%S_{0,n}(z)  =\, & 1+ \frac {\sin (\pi d) }{ \pi  } \frac{\lambda_0}{z  -1} n^{-1}+ O(n^{-2}),
%\\
%S_{1,n}(z)  =\, & z+ \frac {\sin (\pi d) }{ \pi  } \frac{\lambda_0}{z  -1} n^{-1}+ O(n^{-2}),
%\\
%S'_{0,n}(z)  =\, &  - \frac {\sin (\pi d) }{ \pi  } \frac{\lambda_0}{(z  -1)^2} n^{-1}+ O(n^{-2}),
%\\
%S'_{1,n}(z)  =\, & 1 - \frac {\sin (\pi d) }{ \pi  } \frac{\lambda_0}{(z  -1)^2} n^{-1}+ O(n^{-2}).
%\end{align*}
\begin{align*}
S_{0,n}(-1)  =\, & 1-   \frac{\widetilde\lambda_0}{2} n^{-1}+ O(n^{-2}),
\\
S_{1,n}(-1)  =\, & -1-   \frac{\widetilde\lambda_0}{2} n^{-1}+ O(n^{-2}),
\\
S'_{0,n}(-1)  =\, &  -   \frac{\widetilde\lambda_0}{4 } n^{-1}+ O(n^{-2}),
\\
S'_{1,n}(-1)  =\, & 1 -   \frac{\widetilde\lambda_0}{4 } n^{-1}+ O(n^{-2}),
\end{align*}
where 
$
\widetilde \lambda_0 := \dfrac {\sin (\pi d) }{ \pi  }  \lambda_0 = d(1+d).
$
Substitution into the above equation gives 
$$
  \Big( n+(2+2 c- \tfrac{1}{2} \widetilde\lambda_0)   - ( \tfrac 3 2+  c)  \widetilde\lambda_0n^{-1} \Big)a_0
   + \\
    \Big(       
      - n-( 2 c+  \tfrac{1}{2}\widetilde\lambda_0)   -(\tfrac 32+ c)  \widetilde\lambda_0 n^{-1}  \Big) a_1    =0.
$$

The second equation follows from \eqref{E3}
$$
(1 -   \widetilde \lambda_0  n^{-1}) a_0 - \widetilde \lambda_0  n^{-1}a_1 =\frac 1 2 \sigma_0^2.
$$
Solving for $a_0$ and $a_1$ yields
$$
a_{1,n} = 
\frac 1 2 \sigma_0^2
\frac{
n+ 2+2 c- \tfrac{1}{2} \widetilde\lambda_0 +   O(n^{-1})
}
{
 n  + 2 c -  \tfrac{3}{2}\widetilde\lambda_0 +O(n^{-1})    
} = \frac 1 2 \sigma_0^2\Big(1 +
(2+     \widetilde\lambda_0)n^{-1}
\Big)+O(n^{-2}).
$$
Plugging the obtained asymptotic expressions for $a_{1,n}$ and $b_{1,n}$ into  \eqref{ab2salpha} 
we find that 
$$
\sigma^2(n)-\sigma^2_0 =   
  \sigma_0^2  \frac { d^2+1} n  + O(n^{-2}), \quad
\alpha(n) =  \frac {d+1} n  + O(n^{-2}), \qquad n\to\infty.
$$
\qed

\section{Numerical experiments}
In this section, we present the outcomes of some numerical experiments that illustrate the main result of this paper, Theorem \ref{main-thm}.
For each fixed $n$, the predictor coefficients were found by solving  equation \eqref{maineq}. 
and  $\sigma^2(n)$ and $\alpha(n)$ were 
computed using equations    \eqref{gLgR} and   \eqref{sa}. 
The limiting error $\sigma^2_0$ from \eqref{deluc}, 
given by the Szeg\"{o}-Kolmogorov formula \eqref{SKfla},  
was approximated by numerical integration using suitably truncated series in \eqref{fGnf}.
As can be seen from the plots in Figures \ref{fig:fGn} and \ref{fig:farima1}, the transient behavior of the corresponding quantities is somewhat different for fGn and FARIMA$(0,d,0)$ processes.
For larger $n$, the results agree with the asymptotic theory. 

%\begin{figure}[h!]
%    \centering
%    \includegraphics[width=0.8\textwidth]{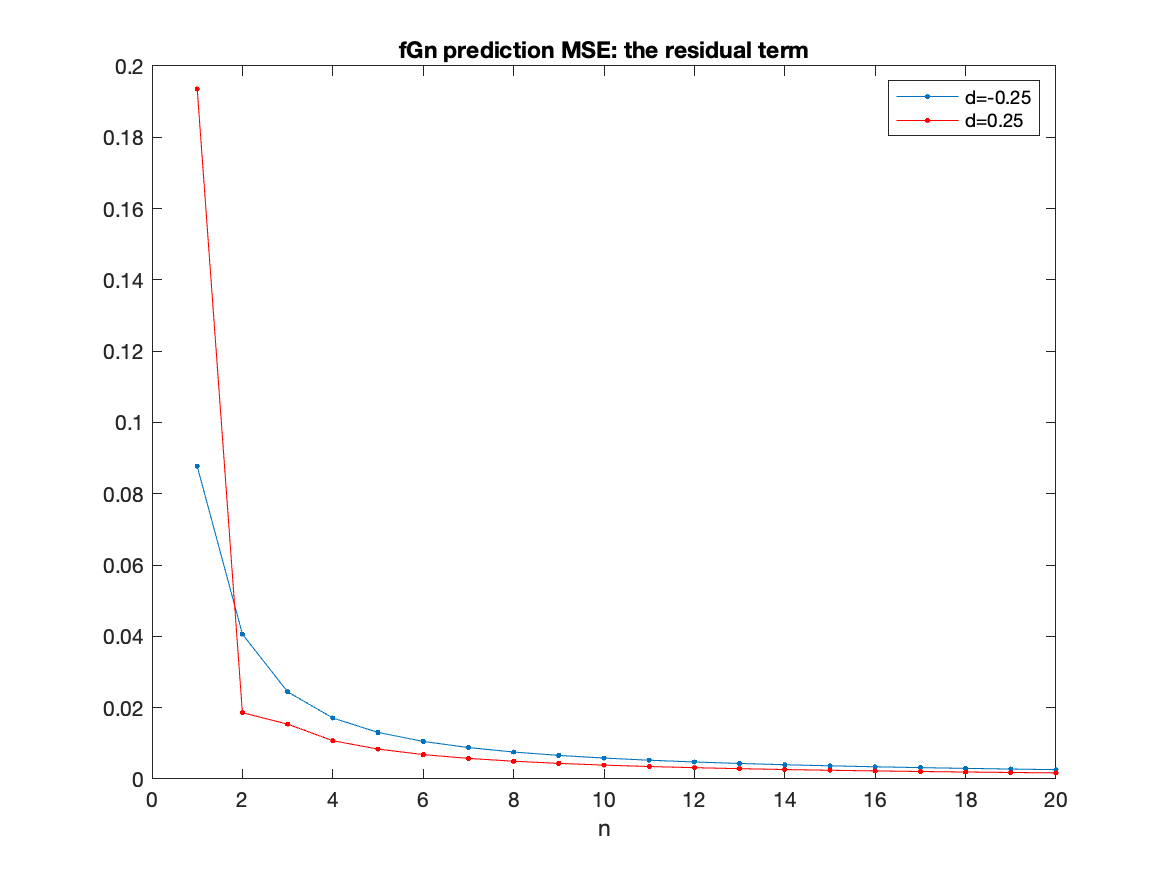}
%\end{figure}

\begin{figure}[h!]
    \centering
    \includegraphics[width=0.8\textwidth]{fGn_mse.png}
    \includegraphics[width=0.8\textwidth]{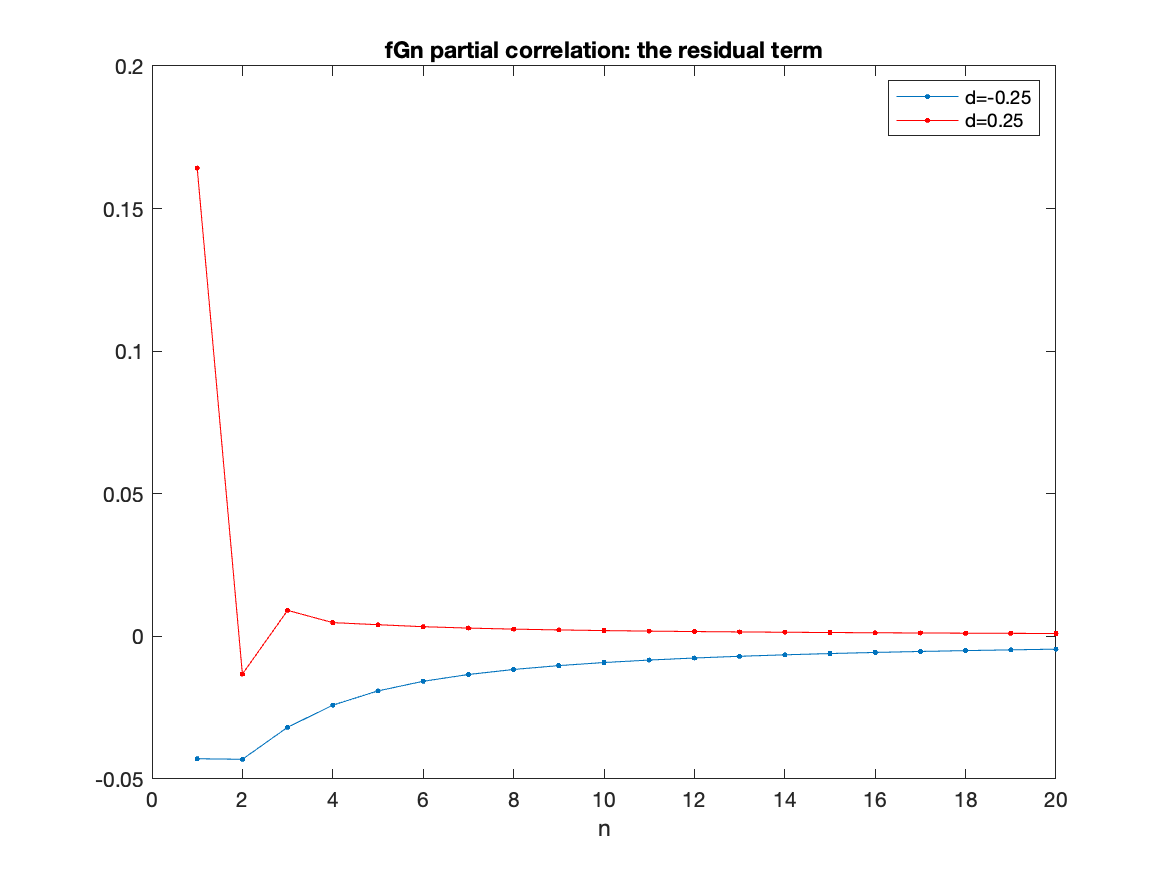}
    \caption{The top figure shows the sequence $n(\sigma^2(n)/\sigma_0^2-1)-d^2$, i.e., the residual term in  \eqref{deluc} 
    for the fGn, plotted for two values of $d$. The bottom figure depicts the sequence $\alpha(n)n-d$ for the same values of $d$.  }
    \label{fig:fGn}
\end{figure}

%\begin{figure}[h!]
%    \centering
%    \includegraphics[width=0.8\textwidth]{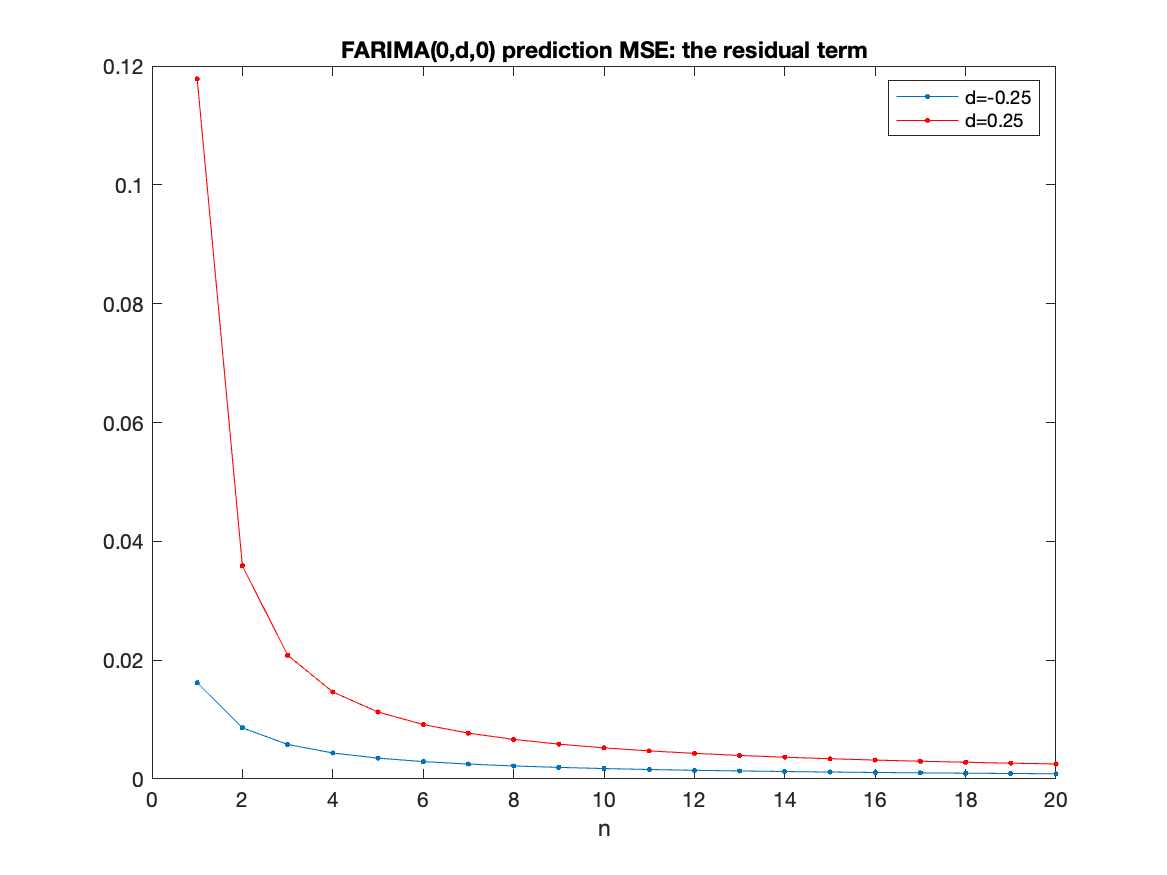}
%\end{figure}

\begin{figure}[h!]
    \centering
    \includegraphics[width=0.8\textwidth]{farima_mse.png}
    \includegraphics[width=0.8\textwidth]{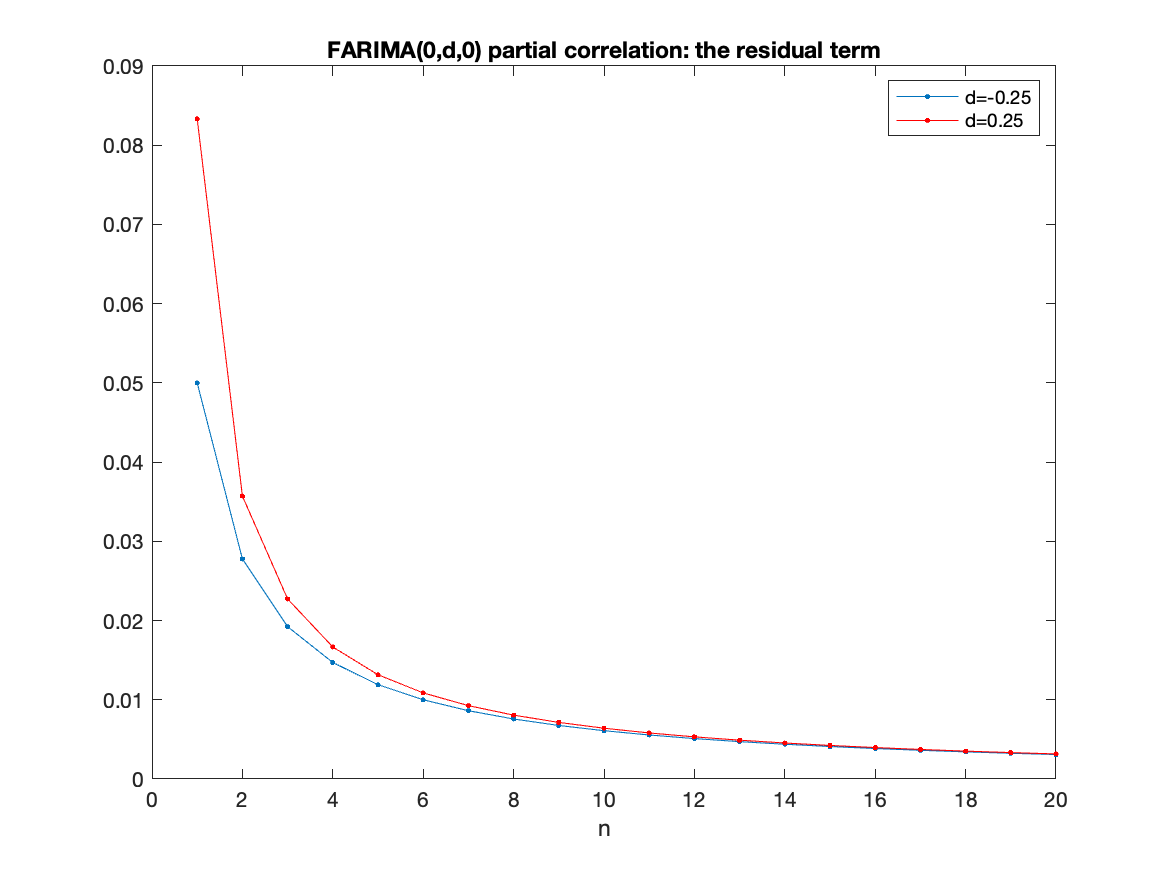}
    \caption{The corresponding plots for the FARIMA$(0,d,0)$ process (for which $\sigma_0^2=1$).}
    \label{fig:farima1}
\end{figure}

\end{appendix}

% Change title according to manuscript
%\begin{acks}[Acknowledgements]
%\end{acks}
%
\begin{funding}%% Put support info here
The first author was supported by the Israel Science Foundation (ISF),
grant No. 2821/25.
\end{funding}

\clearpage 
%\begin{supplement}
%\stitle{}
%%\slink[doi]{10.3150/00-BEJXXXXSUPP}  %[doi,text={...}] %use to break doi
%%\sdatatype{.?} %% pdf,doc,zip
%%\sfilename{BEJXXXXsupp.?} %% pdf,doc,zip
%%%%%%%%%%%%%%%%%%%%%%%%%%%%%%%%%%%%%
%\sdescription{}
%\end{supplement}

%\begin{supplement}
%\stitle{Proofs of the auxiliary results}
%\slink[doi]{10.3150/26-BEJ1983SUPP}
%\sdatatype{.pdf}
%\sfilename{bej1983supp.pdf}
%\sdescription{This supplement contains detailed proofs of some auxiliary results mentioned throughout the text.}
%\end{supplement}

\def\cprime{$'$} \def\cprime{$'$} \def\cydot{\leavevmode\raise.4ex\hbox{.}}
  \def\cprime{$'$} \def\cprime{$'$} \def\cprime{$'$}
  
% structpyb loaded by akundreckaite, 2026-06-02 13:48:13

%\printhistory


\begin{thebibliography}{19}

%%% bbsrt2.pl, ver. 2.5.15, 2026.05.09
%b1 ###bbsrt2
%b1 ###
\bibitem[\protect\citeauthoryear{Afterman et~al.}{2022}]{AChKM22}
\begin{barticle}[auto:parserefs-M02]
\bauthor{\bsnm{Afterman},~\bfnm{D.}\binits{D.}},
\bauthor{\bsnm{Chigansky},~\bfnm{P.}\binits{P.}},
\bauthor{\bsnm{Kleptsyna},~\bfnm{M.}\binits{M.}} \AND
\bauthor{\bsnm{Marushkevych},~\bfnm{D.}\binits{D.}}
(\byear{2022}).
\btitle{Linear filtering with fractional noises: Large time and small noise asymptotics}.
\bjournal{SIAM J. Control Optim.}
\bvolume{60}
\bpages{1463--1487}.
\end{barticle}
%
\OrigBibText
Afterman,~D.,
Chigansky,~P.,
Kleptsyna,~M. and
Marushkevych,~D. (2022).
Linear Filtering with Fractional Noises: Large Time and Small Noise
Asymptotics. SIAM Journal on Control and Optimization
60 1463-1487.
\endOrigBibText
\bptok{structpyb}%
\endbibitem

%b2 ###bbsrt2
%b2 ###
\bibitem[\protect\citeauthoryear{Babayan and Ginovyan}{2023}]{BG23}
\begin{barticle}[auto:parserefs-M02]
\bauthor{\bsnm{Babayan},~\bfnm{N.~M.}\binits{N.M.}} \AND
\bauthor{\bsnm{Ginovyan},~\bfnm{M.~S.}\binits{M.S.}}
(\byear{2023}).
\btitle{Asymptotic behavior of the prediction error for stationary sequences}.
\bjournal{Probab. Surv.}
\bvolume{20}
\bpages{664--721}.
\bid{doi={10.1214/23-ps21}, mr={4607530}}
\end{barticle}
%
\OrigBibText
Babayan,~N.~M. and
Ginovyan,~M.~S. (2023).
Asymptotic behavior of the prediction error for stationary sequences.
Probab. Surv. 20 664--721.
10.1214/23-ps21 MR4607530
\endOrigBibText
\bptok{structpyb}%
\endbibitem

%b3 ###bbsrt2
%b3 ###
\bibitem[\protect\citeauthoryear{Brockwell and Davis}{1991}]{BDbook91}
\begin{bbook}[auto:parserefs-M02]
\bauthor{\bsnm{Brockwell},~\bfnm{P.~J.}\binits{P.J.}} \AND
\bauthor{\bsnm{Davis},~\bfnm{R.~A.}\binits{R.A.}}
(\byear{1991}).
\btitle{Time Series: Theory and Methods},
\bedition{2nd} ed.
\bseries{Springer Series in Statistics}.
\blocation{New York}:
\bpublisher{Springer-Verlag}.
\bid{doi={10.1007/978-1-4419-0320-4}, mr={1093459}}
\end{bbook}
%
\OrigBibText
Brockwell,~P.~J. and
Davis,~R.~A. (1991).
Time series: theory and methods, second ed.
Springer Series in Statistics.
Springer-Verlag, New York.
10.1007/978-1-4419-0320-4 MR1093459
\endOrigBibText
\bptok{structpyb}%
\endbibitem

%b4 ###bbsrt2
%b4 ###
\bibitem[\protect\citeauthoryear{Chigansky and Kleptsyna}{2018}]{ChK}
\begin{barticle}[auto:parserefs-M02]
\bauthor{\bsnm{Chigansky},~\bfnm{P.}\binits{P.}} \AND
\bauthor{\bsnm{Kleptsyna},~\bfnm{M.}\binits{M.}}
(\byear{2018}).
\btitle{Exact asymptotics in eigenproblems for fractional Brownian covariance operators}.
\bjournal{Stoch. Process. Appl.}
\bvolume{128}
\bpages{2007--2059}.
\bid{doi={10.1016/j.spa.2017.08.019}, mr={3797652}}
\end{barticle}
%
\OrigBibText
Chigansky,~P. and
Kleptsyna,~M. (2018).
Exact asymptotics in eigenproblems for fractional Brownian covariance
operators. Stochastic Process. Appl. 128
2007--2059. 10.1016/j.spa.2017.08.019
MR3797652
\endOrigBibText
\bptok{structpyb}%
\endbibitem

%%b5 ###bbsrt2
%\bibitem[\protect\citeauthoryear{Chigansky and Kleptsyna}{2026}]{bjsup26}
%\begin{bmisc}[author]
%\bauthor{\bsnm{Chigansky},~\binits{P.}} \AND
%\bauthor{\bsnm{Kleptsyna},~\binits{M.}}
%(\byear{2026}).
%\bhowpublished{Supplement to ``Asymptotic analysis of the finite predictor for fractional Gaussian noise.''
%%\doiurl{10.3150/26-BEJ1983SUPP}
%}
%\end{bmisc}
%\bptok{structpyb}%
%\endbibitem

%
%\bibitem[\protect\citeauthoryear{Chigansky and Kleptsyna}{2026}]{bjsup26}
%\begin{bmisc}[auto:parserefs-M02]
%\bauthor{\bsnm{Chigansky},~\bfnm{P.}\binits{P.}} \AND
%\bauthor{\bsnm{Kleptsyna},~\bfnm{M.}\binits{M.}}
%(\byear{2026}).
%\bhowpublished{%
%Supplement to ``Asymptotic analysis of the finite predictor for fractional Gaussian noise.''}.
%\end{bmisc}
%%
%\OrigBibText
%Chigansky,~P. and
%Kleptsyna,~M. (2026).
%Supplement to ``Asymptotic analysis of the finite predictor for
%fractional Gaussian noise.''.
%\endOrigBibText
%\bptok{structpyb}%
%\endbibitem

%b6 ###bbsrt2
%%b5 ###
%b6 ###
\bibitem[\protect\citeauthoryear{Embrechts and Maejima}{2002}]{EM02}
\begin{bbook}[auto:parserefs-M02]
\bauthor{\bsnm{Embrechts},~\bfnm{P.}\binits{P.}} \AND
\bauthor{\bsnm{Maejima},~\bfnm{M.}\binits{M.}}
(\byear{2002}).
\btitle{Selfsimilar Processes}.
\bseries{Princeton Series in Applied Mathematics}.
\blocation{Princeton, NJ}:
\bpublisher{Princeton University Press}.
\bid{mr={1920153}}
\end{bbook}
%
\OrigBibText
Embrechts,~P. and
Maejima,~M. (2002).
Selfsimilar processes.
Princeton Series in Applied Mathematics.
Princeton University Press, Princeton, NJ.
MR1920153 (2004c:60003)
\endOrigBibText
\bptok{structpyb}%
\endbibitem

%b7 ###bbsrt2
%b7 ###
\bibitem[\protect\citeauthoryear{Gakhov}{1990}]{Gahov}
\begin{bbook}[auto:parserefs-M02]
\bauthor{\bsnm{Gakhov},~\bfnm{F.~D.}\binits{F.D.}}
(\byear{1990}).
\btitle{Boundary Value Problems}.
\blocation{New York}:
\bpublisher{Dover Publications, Inc.}
\bnote{Translated from the Russian, Reprint of the 1966 translation}.
\bid{mr={1106850}}
\end{bbook}
%
\OrigBibText
Gakhov,~F.~D. (1990).
Boundary value problems.
Dover Publications, Inc., New York
Translated from the Russian, Reprint of the 1966 translation.
MR1106850
\endOrigBibText
\bptok{structpyb}%
\endbibitem

%b8 ###bbsrt2
%b8 ###
\bibitem[\protect\citeauthoryear{Grenander and Szeg\H{o}}{1984}]{GS84}
\begin{bbook}[auto:parserefs-M02]
\bauthor{\bsnm{Grenander},~\bfnm{U.}\binits{U.}} \AND
\bauthor{\bsnm{Szeg\H{o}},~\bfnm{G.}\binits{G.}}
(\byear{1984}).
\btitle{Toeplitz Forms and Their Applications},
\bedition{2nd} ed.
\blocation{New York}:
\bpublisher{Chelsea Publishing Co.}
\bid{mr={0890515}}
\end{bbook}
%
\OrigBibText
Grenander,~U. and
Szeg\H{o},~G. (1984).
Toeplitz forms and their applications, Second ed.
Chelsea Publishing Co., New York. MR890515
\endOrigBibText
\bptok{structpyb}%
\endbibitem

%b9 ###bbsrt2
%b9 ###
\bibitem[\protect\citeauthoryear{Ibragimov and Solev}{1968}]{IS68}
\begin{barticle}[auto:parserefs-M02]
\bauthor{\bsnm{Ibragimov},~\bfnm{I.~A.}\binits{I.A.}} \AND
\bauthor{\bsnm{Solev},~\bfnm{V.~N.}\binits{V.N.}}
(\byear{1968}).
\btitle{Asymptotic behavoir of the prediction error of a stationary sequence with a spectral density of special form}.
\bjournal{Teor. Veroyatn. Primen.}
\bvolume{13}
\bpages{746--751}.
\bid{mr={0242242}}
\end{barticle}
%
\OrigBibText
Ibragimov,~I.~A. and
Solev,~V.~N. (1968).
Asymptotic behavoir of the prediction error of a stationary sequence
with a spectral density of special form.
Teor. Verojatnost. i Primenen. 13
746--751. MR242242
\endOrigBibText
\bptok{structpyb}%
\endbibitem

%b10 ###bbsrt2
%b10 ###
\bibitem[\protect\citeauthoryear{Inoue}{2000}]{In00}
\begin{barticle}[auto:parserefs-M02]
\bauthor{\bsnm{Inoue},~\bfnm{A.}\binits{A.}}
(\byear{2000}).
\btitle{Asymptotics for the partial autocorrelation function of a stationary process}.
\bjournal{J. Anal. Math.}
\bvolume{81}
\bpages{65--109}.
\bid{doi={10.1007/BF02788986}, mr={1785278}}
\end{barticle}
%
\OrigBibText
Inoue,~A. (2000).
Asymptotics for the partial autocorrelation function of a stationary
process. J. Anal. Math. 81 65--109.
10.1007/BF02788986 MR1785278
\endOrigBibText
\bptok{structpyb}%
\endbibitem

%b11 ###bbsrt2
%b11 ###
\bibitem[\protect\citeauthoryear{Inoue}{2008}]{In08}
\begin{barticle}[auto:parserefs-M02]
\bauthor{\bsnm{Inoue},~\bfnm{A.}\binits{A.}}
(\byear{2008}).
\btitle{AR and MA representation of partial autocorrelation functions, with applications}.
\bjournal{Probab. Theory Related Fields}
\bvolume{140}
\bpages{523--551}.
\end{barticle}
%
\OrigBibText
Inoue,~A. (2008).
AR and MA representation of partial autocorrelation functions, with
applications. Probability Theory and Related Fields
140 523--551.
\endOrigBibText
\bptok{structpyb}%
\endbibitem

%b12 ###bbsrt2
%b12 ###
\bibitem[\protect\citeauthoryear{Inoue}{2023}]{In23}
\begin{barticle}[auto:parserefs-M02]
\bauthor{\bsnm{Inoue},~\bfnm{A.}\binits{A.}}
(\byear{2023}).
\btitle{Representation theorems in finite prediction, with applications}.
\bjournal{Sugaku Expositions}
\bvolume{36}
\bpages{173--197}.
\bid{mr={4652498}}
\end{barticle}
%
\OrigBibText
Inoue,~A. (2023).
Representation theorems in finite prediction, with applications.
Sugaku Expositions 36 173--197.
MR4652498
\endOrigBibText
\bptok{structpyb}%
\endbibitem

%b13 ###bbsrt2
%b13 ###
\bibitem[\protect\citeauthoryear{Lewin}{1981}]{L81}
\begin{bbook}[auto:parserefs-M02]
\bauthor{\bsnm{Lewin},~\bfnm{L.}\binits{L.}}
(\byear{1981}).
\btitle{Polylogarithms and Associated Functions}.
\blocation{New York-Amsterdam}:
\bpublisher{North-Holland Publishing Co.}
\bnote{With a foreword by A. J. Van der Poorten}.
\bid{mr={618278}}
\end{bbook}
%
\OrigBibText
Lewin,~L. (1981).
Polylogarithms and associated functions.
North-Holland Publishing Co., New York-Amsterdam
With a foreword by A. J. Van der Poorten. MR618278
\endOrigBibText
\bptok{structpyb}%
\endbibitem

\bibitem[\protect\citeauthoryear{Lundgren and Chiang}{1967}]{LC67}
\begin{barticle}[author]
\bauthor{\bsnm{Lundgren},~\bfnm{T.}\binits{T.}} \AND
  \bauthor{\bsnm{Chiang},~\bfnm{D.}\binits{D.}}
(\byear{1967}).
\btitle{Solution of a class of singular integral equations}.
\bjournal{Quart. Appl. Math.}
\bvolume{24}
\bpages{303--313}.
\bdoi{10.1090/qam/215029}
\bmrnumber{215029}
\end{barticle}
\endbibitem



\bibitem[\protect\citeauthoryear{Magnus, Oberhettinger and Soni}{1966}]{MOFS66}
\begin{bbook}[author]
\bauthor{\bsnm{Magnus},~\bfnm{Wilhelm}\binits{W.}},
  \bauthor{\bsnm{Oberhettinger},~\bfnm{Fritz}\binits{F.}} \AND
  \bauthor{\bsnm{Soni},~\bfnm{Raj~Pal}\binits{R.~P.}}
(\byear{1966}).
\btitle{Formulas and theorems for the special functions of mathematical
  physics},
\bedition{enlarged} ed.
\bseries{Die Grundlehren der mathematischen Wissenschaften, Band 52}.
\bpublisher{Springer-Verlag New York, Inc., New York}.
\bmrnumber{232968}
\end{bbook}
\endbibitem

\bibitem[\protect\citeauthoryear{Mel\cprime~nik}{1959}]{M59}
\begin{barticle}[author]
\bauthor{\bsnm{Mel\cprime~nik},~\bfnm{I.~M.}\binits{I.~M.}}
(\byear{1959}).
\btitle{The {R}iemann boundary problem with dicontinuous coefficients}.
\bjournal{Izv. Vys\v{s}. U\v{c}ebn. Zaved. Matematika}
\bvolume{1959}
\bpages{158--166}.
\bmrnumber{126558}
\end{barticle}
\endbibitem


%b14 ###bbsrt2
%b14 ###
\bibitem[\protect\citeauthoryear{Pal$'$cev}{1974}]{P74}
\begin{barticle}[auto:parserefs-M02]
\bauthor{\bsnm{Pal$'$cev},~\bfnm{B.~V.}\binits{B.V.}}
(\byear{1974}).
\btitle{Asymptotic behavior of the spectrum and eigenfunctions of convolution operators on a finite interval with the kernel having a homogeneous Fourier transform}.
\bjournal{Dokl. Akad. Nauk SSSR}
\bvolume{218}
\bpages{28--31}.
\bid{mr={0355697}}
\end{barticle}
%
\OrigBibText
Pal$'$cev,~B.~V. (1974).
Asymptotic behavior of the spectrum and eigenfunctions of convolution
operators on a finite interval with the kernel having a homogeneous Fourier
transform. Dokl. Akad. Nauk SSSR 218
28--31. MR0355697 (50 \#\#8171)
\endOrigBibText
\bptok{structpyb}%
\endbibitem

%b15 ###bbsrt2
%b15 ###
\bibitem[\protect\citeauthoryear{Pal$'$tsev}{2003}]{P03}
\begin{barticle}[auto:parserefs-M02]
\bauthor{\bsnm{Pal$'$tsev},~\bfnm{B.~V.}\binits{B.V.}}
(\byear{2003}).
\btitle{Asymptotics of the spectrum of integral convolution operators on a finite interval with homogeneous polar kernels}.
\bjournal{Izv. Ross. Akad. Nauk Ser. Mat.}
\bvolume{67}
\bpages{67--154}.
\bid{doi={10.1070/IM2003v067n04ABEH000443}, mr={2001765}}
\end{barticle}
%
\OrigBibText
Pal$'$tsev,~B.~V. (2003).
Asymptotics of the spectrum of integral convolution operators on a
finite interval with homogeneous polar kernels.
Izv. Ross. Akad. Nauk Ser. Mat. 67
67--154. 10.1070/IM2003v067n04ABEH000443
MR2001765 (2004e:45006)
\endOrigBibText
\bptok{structpyb}%
\endbibitem

%b16 ###bbsrt2
%b16 ###
\bibitem[\protect\citeauthoryear{Pipiras and Taqqu}{2017}]{PT17}
\begin{bbook}[auto:parserefs-M02]
\bauthor{\bsnm{Pipiras},~\bfnm{V.}\binits{V.}} \AND
\bauthor{\bsnm{Taqqu},~\bfnm{M.~S.}\binits{M.S.}}
(\byear{2017}).
\btitle{Long-Range Dependence and Self-Similarity}.
\bseries{Cambridge Series in Statistical and Probabilistic Mathematics}
\bvolume{45}.
\blocation{Cambridge}:
\bpublisher{Cambridge University Press}.
\bid{mr={3729426}}
\end{bbook}
%
\OrigBibText
Pipiras,~V. and
Taqqu,~M.~S. (2017).
Long-range dependence and self-similarity.
Cambridge Series in Statistical and Probabilistic Mathematics, [45].
Cambridge University Press, Cambridge.
MR3729426
\endOrigBibText
\bptok{structpyb}%
\endbibitem

%b17 ###bbsrt2
%b17 ###
\bibitem[\protect\citeauthoryear{Press et~al.}{2007}]{NRbook}
\begin{bbook}[auto:parserefs-M02]
\bauthor{\bsnm{Press},~\bfnm{W.~H.}\binits{W.H.}},
\bauthor{\bsnm{Teukolsky},~\bfnm{S.~A.}\binits{S.A.}},
\bauthor{\bsnm{Vetterling},~\bfnm{W.~T.}\binits{W.T.}} \AND
\bauthor{\bsnm{Flannery},~\bfnm{B.~P.}\binits{B.P.}}
(\byear{2007}).
\btitle{Numerical Recipes: The Art of Scientific Computing},
\bedition{3rd} ed.
\blocation{Cambridge}:
\bpublisher{Cambridge University Press}.
\bid{mr={2371990}}
\end{bbook}
%
\OrigBibText
Press,~W.~H.,
Teukolsky,~S.~A.,
Vetterling,~W.~T. and
Flannery,~B.~P. (2007).
Numerical recipes, Third ed.
Cambridge University Press, Cambridge
The art of scientific computing. MR2371990
\endOrigBibText
\bptok{structpyb}%
\endbibitem

%b18 ###bbsrt2
%b18 ###
\bibitem[\protect\citeauthoryear{Stein and Shakarchi}{2003}]{Sbook03}
\begin{bbook}[auto:parserefs-M02]
\bauthor{\bsnm{Stein},~\bfnm{E.~M.}\binits{E.M.}} \AND
\bauthor{\bsnm{Shakarchi},~\bfnm{R.}\binits{R.}}
(\byear{2003}).
\btitle{Complex Analysis. Princeton Lectures in Analysis 2}.
\blocation{Princeton, NJ}:
\bpublisher{Princeton University Press}.
\bid{mr={1976398}}
\end{bbook}
%
\OrigBibText
Stein,~E.~M. and
Shakarchi,~R. (2003).
Complex analysis. Princeton Lectures in Analysis
2. Princeton University Press, Princeton, NJ.
MR1976398
\endOrigBibText
\bptok{structpyb}%
\endbibitem


\bibitem[\protect\citeauthoryear{Truesdell}{1945}]{T45}
\begin{barticle}[author]
\bauthor{\bsnm{Truesdell},~\bfnm{C.}\binits{C.}}
(\byear{1945}).
\btitle{On a function which occurs in the theory of the structure of polymers}.
\bjournal{Ann. of Math. (2)}
\bvolume{46}
\bpages{144--157}.
\bdoi{10.2307/1969153}
\bmrnumber{11344}
\end{barticle}
\endbibitem

%b19 ###bbsrt2
%b19 ###
\bibitem[\protect\citeauthoryear{Ukai}{1971}]{Ukai}
\begin{barticle}[auto:parserefs-M02]
\bauthor{\bsnm{Ukai},~\bfnm{S.}\binits{S.}}
(\byear{1971}).
\btitle{Asymptotic distribution of eigenvalues of the kernel in the Kirkwood-Riseman integral equation}.
\bjournal{J. Math. Phys.}
\bvolume{12}
\bpages{83--92}.
\bid{mr={0275084}}
\end{barticle}
%
\OrigBibText
Ukai,~S. (1971).
Asymptotic distribution of eigenvalues of the kernel in the
Kirkwood-Riseman integral equation.
J. Mathematical Phys. 12 83--92.
MR0275084 (43 \#\#842)
\endOrigBibText
\bptok{structpyb}%
\endbibitem


 




\end{thebibliography}
\end{document}